\documentclass[11pt]{article}

\usepackage{amsthm,amssymb,amsmath}
\usepackage{color}
\usepackage[T1]{fontenc}
\usepackage[utf8]{inputenc}
\usepackage{enumitem}
\usepackage{url}
\usepackage{bbold}
\usepackage{etex}
\usepackage{memhfixc}
\usepackage{titlesec}
\titleformat{\subsection}
  {\normalfont\bf\itshape}{\thesubsection}{1em}{}
  \titleformat{\subsubsection}
  {\normalfont\itshape}{\thesubsubsection}{1em}{}
\usepackage[top=2cm,bottom=2cm,left=2cm,right=2cm]{geometry}

\numberwithin{equation}{section}

\usepackage{hyperref}
\hypersetup{
  colorlinks = true,
  urlcolor = blue, 
  linkcolor = blue,
  citecolor = blue}

\newtheorem{thm}{Theorem}[section]
\newtheorem{rem}[thm]{Remark}
\newtheorem{defi}[thm]{Definition}
\newtheorem{lem}[thm]{Lemma}

\newtheorem{pro}[thm]{Proposition}
\newtheorem{exm}[thm]{Example}

\newcommand{\prob}{\mathbb{P}}
\newcommand{\esp}{\mathbb{E}}

\newcommand{\N}{\mathbb N}

\newcommand{\R}{\mathbb R}



\newcommand{\e}{\varepsilon}


\newcommand{\myand}{\quad\mbox{and}\quad}

\newcommand{\supp}{\textrm{supp}}

\newcommand{\ms}{M}
\newcommand{\curv}{\mathrm{curv}}
\newcommand{\Ric}{\mathrm{Ric}}
\newcommand{\Hess}{\mathrm{Hess}}

\title{Convergence rates for empirical barycenters in metric spaces: curvature, convexity and extendable geodesics}
\author{A. Ahidar-Coutrix\footnote{Aix Marseille Univ., CNRS, Centrale Marseille, I2M, Marseille, France. Email:adil.ahidar@outlook.com},\, T. Le Gouic\footnote{Aix Marseille Univ., CNRS, Centrale Marseille, I2M, Marseille, France \& National Research University Higher School of Economics, Moscow, Russia. This work has been funded by the  Russian Academic Excellence Project '5-100'. Email:thibaut.le$\_$gouic@math.cnrs.fr} \,and Q. Paris\footnote{National Research University Higher School of Economics, Moscow, Russia. This work has been funded by the  Russian Academic Excellence Project '5-100'. Email:qparis@hse.ru}}
\date{\today}

\begin{document}
\maketitle

\begin{abstract}
This paper provides rates of convergence for empirical (generalised) barycenters on compact geodesic metric spaces under general conditions using empirical processes techniques.
Our main assumption is termed a variance inequality and provides a strong connection between usual assumptions in the field of empirical processes and central concepts of metric geometry.
We study the validity of variance inequalities in spaces of non-positive and non-negative Aleksandrov curvature.
In this last scenario, we show that variance inequalities hold provided geodesics, emanating from a barycenter, can be extended by a constant factor.
We also relate variance inequalities to strong geodesic convexity. While not restricted to this setting, our results are largely discussed in the context of the $2$-Wasserstein space.
\end{abstract}

\newpage
\section{Introduction}
Given a separable and complete metric space $(M,d)$, define $\mathcal P_2(M)$ as the set of Borel probability measures $P$ on $M$ such that 
\[
\int_M d(x,y)^2\,\textrm{d}P(y)<+\infty,
\]
for all $x\in M$.
A barycenter of $P\in \mathcal P_2(M)$, also called a Fr\'echet mean \cite{Fr48}, is any element $x^*\in M$ such that
\begin{equation}
\label{xstarintrobary}
x^{*}\in\underset{x\in M}{\arg\min}\,\int_M d(x,y)^2\,\textrm{d}P(y).
\end{equation}
When it exists, a barycenter stands as a natural analog of the mean of a (square integrable) probability measure on $\R^d$.
Alternative notions of mean value include local minimisers \cite{Kar14}, $p$-means \cite{yokota2017}, exponential barycenters \cite{emery1991barycentre} or convex means \cite{emery1991barycentre}.
Extending the notion of mean value to the case of probability measures on spaces $M$ with no Euclidean (or Hilbert) structure has a number of applications ranging from geometry \cite{sturm2003} and optimal transport \cite{villani2003,villani2008optimal,santambrogio2015,cp2018} to statistics and data science \cite{Pelletier2005,BLL15,Bigotandco2018,KSS19}, and the context of abstract metric spaces provides a unifying framework encompassing many non-standard settings.\\

Properties of barycenters, such as existence and uniqueness, happen to be closely related to geometric characteristics of the space $M$.
These properties are addressed in the context of Riemannian manifolds in \cite{Af11}.
Many interesting examples of metric spaces, however, cannot be described as smooth manifolds because of their singularities or infinite dimensional nature.
More general geometrical structures are geodesic metric spaces which include many more examples of interest (precise definitions and necessary background on metric geometry are reported in Appendix \ref{app:mg}).
The barycenter problem has been addressed in this general setting.
The scenario where $M$ has non-positive curvature (from here on, curvature bounds are understood in the sense of Aleksandrov) is considered in \cite{sturm2003}.
More generally, the case of metric spaces with upper bounded curvature is studied in \cite{yokota2016} and \cite{yokota2017}.
The context of spaces $M$ with lower bounded curvature is discussed in \cite{yokota2012rigidity} and \cite{ohta2012barycenters}.\\

Focus on the case of metric spaces with non-negative curvature may be motivated by the increasing interest for the theory of optimal transport and its applications.
Indeed, a space of central importance in this context is the Wasserstein space $M=\mathcal P_2(\R^d)$, equipped with the Wasserstein metric $W_2$, known to be geodesic and with non-negative curvature (see Section 7.3 in \cite{ambrosio2008gradient}).
In this framework, the barycenter problem was first studied by \cite{agueh2010barycenters} and has since gained considerable momentum.
Existence and uniqueness of barycenters in $\mathcal P_2(\R^d)$ has further been studied in \cite{Legouic2017}. \\

A number of objects of interest, including barycenters as a special case, may be described as minimisers of the form 
\begin{equation}
\label{xstarintro}
x^{*}\in\underset{x\in M}{\arg\min}\,\int_M F(x,y)\,\textrm{d}P(y),
\end{equation}
for some probability measure $P$ on metric space $M$ and some functional $F:M\times M\to \R$.
While we obviously recover the definition of barycenters whenever $F(x,y)=d(x,y)^2$, many functionals of interest are not of this specific form.
With a slight abuse of language, minimisers such as $x^*$ will be called generalised barycenters in the sequel.
A first example we have in mind, in the context where $M=\mathcal P_2(\R^d)$, is the case where functional $F$ is an $f$-divergence, i.e. 
\begin{equation}
\nonumber
F(\mu,\nu):=\left\{\begin{array}{ll}
\int f\left(\frac{{\rm d}\mu}{{\rm d}\nu}\right)\,{\rm d}\nu &\mbox{ if }\mu\ll \nu,\\
+\infty&\mbox{ otherwise,}\
\end{array}\right.
\end{equation}
for some convex function $f:\R_+\to \R$.
Known for their importance in statistics \cite{LeCam86,Tsyb09}, and information theory \cite{Vaj89}, $f$-divergences have become a crucial tool in a number of other fields such as geometry and optimal transport \cite{Sturm06I,Sturm06II,LottVillani2009} or machine learning \cite{Goodfellow2014}.
Other examples arise when the squared distance $d(x,y)^2$ in \eqref{xstarintrobary} is replaced by a regularised version $F(x,y)$ aiming at enforcing computationally friendly properties, such as convexity, while providing at the same time a sound approximation of $d(x,y)^2$.
A significant example in this spirit is the case where functional $F$ is the entropy-regularised Wasserstein distance (also known as the Sinkhorn divergence) largely used as a proxy for $W_2$ in applications \cite{cuturi2013,cp2018,Altschuler2017,Dvurechensky2018}.\\

In the paper, our main concern is to provide rates of convergence for empirical generalised barycenters, defined as follows.
Given a collection $Y_1,\dots,Y_n$ of independent and $M$-valued random variables with same distribution $P$, we call empirical generalised barycenter any
\begin{equation}
\label{xnintro}
x_n\in\underset{x\in\ms}{\arg\min}\,\frac{1}{n}\sum_{i=1}^{n}F(x,Y_i).
\end{equation}
Any such $x_n$ provides a natural empirical counterpart of a generalised barycenter $x^*$ defined in \eqref{xstarintro}.
The statistical properties of $x_n$ have been studied in a few specific scenarios.
In the case where $F(x,y)=d(x,y)^2$ and $M$ is a Riemannian manifold, significant contributions, establishing in particular consistency and limit distribution under general conditions, are \cite{Bhattacharya2003,Bhattacharya2005} and \cite{Kendall2011}.
Asymptotic properties of empirical barycenters in the Wasserstein space are studied in \cite{Legouic2017}.
We are only aware of a few contributions providing finite sample bounds on the statistical performance of $x_n$.
Paper \cite{Bigotandco2018} provides upper and lower bounds on convergence rates for empirical barycenters in the context of the Wasserstein space over the real line.
Independently of the present contribution, \cite{Sh18} studies a similar problem and provides results complementary to ours.\\

In addition to more transparent conditions, our results are based on the fundamental assumption that there exists constants $K>0$ and $\beta\in(0,1]$ such that, for all $x\in M$,
\begin{equation}
\label{ivintro}
d(x,x^*)^2\le K\left(\int_M (F(x,y)-F(x^*,y))\,\mathrm{d}P(y)\right)^{\beta}.
\end{equation}
We show that condition \eqref{ivintro} provides a connection between usual assumptions in the field of empirical processes and geometric characteristics of the metric space $M$.
First, the reader familiar with the theory of empirical processes will identify in the proof of Theorems \ref{thm:upperbound1} and \ref{thm:upperbound2} that condition \eqref{ivintro} implies a Bernstein condition on the class of functions indexing our empirical process, that is an equivalence between their $L_2$ and $L_1$ norms.
Many authors have emphasised the role of this condition for obtaining fast rates of convergence of empirical minimisers.
Major contributions in that direction are for instance \cite{mammen1999,massart2000,blanchard2003,bartlett2005,bartlett2006,koltchinskii2006,bandm2006} and \cite{Men15}.
In particular, this assumption may be understood in our context as an analog of the Mammen-Tsybakov low-noise assumption \cite{mammen1999} used in binary classification.
Second, we show that condition \eqref{ivintro} carries a strong geometrical meaning.
In the context where $F(x,y)=d(x,y)^2$, \cite{sturm2003} established a tight connection between \eqref{ivintro}, with $K=\beta=1$, and the fact that $M$ has non-positive curvature.
When $F(x,y)=d(x,y)^2$, we show that \eqref{ivintro} actually holds with $K>0$ and $\beta=1$ in geodesic spaces of non-negative curvature under flexible conditions related to the possibility of extending geodesics emanating from a barycenter.
Finally, for a general functional $F$, we connect \eqref{ivintro} to its strong convexity properties.
Using terminology introduced in \cite{sturm2003} in a slightly more specific context, we will call by extention \eqref{ivintro} a variance inequality.\\

The paper is organised has follows.
Section \ref{sec:rates} provides convergence rates for generalised empirical barycenters under several assumptions of functional $F$ and two possible complexity assumptions on metric space $M$.
Section \ref{sec:vi} investigates in details the validity of the variance inequality \eqref{ivintro} in different scenarios.
In particular, we focus on studying \eqref{ivintro} under curvature bounds of the metric $M$ whenever $F(x,y)=d(x,y)^2$.
Additional examples where our results apply are discussed in Section \ref{sec:exmps}.
Proofs are postponed to Section \ref{sec:proofs}.
Finally, Appendix \ref{app:mg} presents an overview of basic concepts and results in metric geometry for convenience.

\section{Rates of convergence}
\label{sec:rates}
In this section, we provide convergence rates for generalised empirical barycenters.
Paragraph \ref{subsec:setup} defines our general setup and mentions our main assumptions on functional $F$.
Paragraphs \ref{subsec:doubling} and \ref{subsec:pme} present rates under different assumptions on the complexity of metric space $M$.
Subsection \ref{subsec:opt} discusses the optimality of our results.

\subsection{Setup}
\label{subsec:setup}
Let $(\ms,d)$ be a separable and complete metric space and $F:\ms\times\ms\to \R$ a measurable function.
Let $P$ be a Borel probability measure on $M$.
Suppose that, for all $x\in\ms$, the function $y\in\ms\mapsto F(x,y)$ is integrable with respect to $P$, and let
\begin{equation}
\label{eq:xstar}
x^*\in\underset{x\in\ms}{\arg\min}\,\int_{\ms} F(x,y)\,\textrm{d}P(y),
\end{equation}
which we suppose exists.
Given a collection $Y_1,\dots,Y_n$ of independent and $M$-valued random variables with same distribution $P$, we consider an empirical minimiser
\begin{equation}
\label{eq:xn}
x_n\in\underset{x\in\ms}{\arg\min}\,\frac{1}{n}\sum_{i=1}^{n}F(x,Y_i).
\end{equation}
The present section studies the statistical performance of $x_n$ under the following assumptions on $F$.

\begin{enumerate}[label=(A\arabic*),leftmargin=*]
\item\label{A1} There exists a constant $K_1>0$ such that, for all $x,y\in\ms$,
\[
|F(x,y)|\le K_1.
\]
\item\label{A2} There exist constants $K_2>0$ and $\alpha\in(0,1]$ such that, for all $x,x',y\in\ms$, 
\[
|F(x,y)-F(x',y)|\le K_2 d(x,x')^{\alpha}.
\]
\item\label{A3} (Variance inequality) There exist constants $K_3>0$ and $\beta\in(0,1]$ such that, for all $x\in\ms$, 
\[
d(x,x^*)^2\le K_3\left(\int_{\ms}(F(x,y)-F(x^*,y))\,{\rm d}P(y)\right)^{\beta}.
\]
\end{enumerate}

Assumptions \ref{A1} and \ref{A2} are transparent boundedness and regularity conditions.
For instance if $F(x,y)=d(x,y)^2$, these assumptions are satisfied whenever $\ms$ is bounded with $K_1=\mathrm{diam}(M)^2$, $K_2=2\mathrm{diam}(M)$ and $\alpha=1$, by the triangular inequality.
The meaning of condition \ref{A3} is less obvious at first sight.
A detailed discussion of \ref{A3} is postponed to section \ref{sec:vi}.
For now, we mention three straightforward implications of \ref{A3}.
First, note that imposing both \ref{A2} and \ref{A3} requires $M$ to be bounded.
Indeed, plugging \ref{A2} into \ref{A3} yields
\[\mathrm{diam}(M)\le 2(K^{\beta}_2K_3)^{\frac{1}{2-\alpha\beta}}.\]
More importantly, \ref{A3} implies that minimiser $x^*$ is unique.
Finally, condition \ref{A3} applied to minimiser $x_n$ reads 
\begin{equation}
    \label{eq:risk}
d(x_n,x^*)^2\le K_3\left(\int_M(F(x_n,y)-F(x^*,y))\,{\rm d}P(y)\right)^{\beta}.
\end{equation}
The left hand side of this inequality is the estimation performance of $x_n$.
The integral, under power $\beta$, may be called the learning performance of $x_n$, or its excess risk.
Having this comparison in mind, we will focus on controlling the learning performance of $x_n$ knowing that an upper bound on $d(x_n,x^*)^2$ may be readily deduced from our results. The remainder of the section therefore presents upper bounds for the right hand side of \eqref{eq:risk} under specific complexity assumptions on $\ms$.
For that purpose, we recall the definition of covering numbers.
For $A\subset\ms$ and $\e>0$, an $\e$-net for $A$ is a finite subset $\{x_1,\dots,x_N\}\subset\ms$ such that 
\[A\subset \bigcup_{j=1}^N B(x_j,\e),\]
where $B(x,\e):=\{u\in\ms:d(x,u)< \e\}$.
The $\e$-covering number $N(A,d,\e)\in (1,+\infty]$ is the smallest integer $N\ge1$ such that there exists an $\e$-net of size $N$ for $A$ in $\ms$.
The function $\e\mapsto\log N(A,d,\e)$ will be referred to as the metric entropy of $A$.

\subsection{Doubling condition}
\label{subsec:doubling}

Our first complexity assumption is the following.
\begin{enumerate}[label=(B1),leftmargin=*]
\item\label{B1} (Doubling condition) There exist constants $C,D>0$ such that, for all $0<\e\le r$,
\[N(B(x^*, r),d,\e)\le \left(\frac{Cr}{\e}\right)^D.\]
\end{enumerate}

Condition \ref{B1} essentially characterises $M$ as a $D$-dimensional space and implies the following result.
\begin{thm}
 \label{thm:upperbound1}
Assume that \ref{A1}, \ref{A2}, \ref{A3} and \ref{B1} hold.
Then, for all $n\ge 1$ and all $t>0$,
\[
kd(x^*,x_n)^{\frac{2}{\beta}}\le  \int_M(F(x_n,y)-F(x^*,y))\,{\rm d}P(y) \le A\cdot\max\left\{\left(\frac{D}{n}\right)^{\frac{1}{2-\alpha\beta}},\left(\frac{t}{n}\right)^{\frac{1}{2-\alpha\beta}}\right\},
\]
with probability at least $1-2e^{-t}$, where $k=K_3^{-\frac{1}{\beta}}$ and $A$ is an explicit constant independent of $n$.
\end{thm}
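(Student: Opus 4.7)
The left inequality is immediate: assumption (A3) applied at $x=x_n$ and rearranged (taking the $1/\beta$ power) gives exactly $k\,d(x^*,x_n)^{2/\beta}\le \int_M (F(x_n,y)-F(x^*,y))\,{\rm d}P(y)$ with $k=K_3^{-1/\beta}$. For the upper bound I would introduce the centred loss class $g_x(y):=F(x,y)-F(x^*,y)$, noting that $Pg_x\ge 0$ by definition of $x^*$, and that $x_n$ being an empirical minimiser forces $P_n g_{x_n}\le P_n g_{x^*}=0$, hence
\[
Pg_{x_n}\le (P-P_n)g_{x_n}.
\]
The remainder of the proof is devoted to bounding $(P-P_n)g_{x_n}$ by an expression of the form $C(Pg_{x_n})^{\alpha\beta/2}n^{-1/2}$ and solving the resulting self-bounding inequality; the key point is that the variance inequality (A3) is exactly what permits this conversion.

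The second step is to establish, on each slice $\mathcal G_r:=\{g_x:Pg_x\le r\}$, a Bernstein-type bound. Assumption (A2) gives $|g_x(y)|\le K_2\,d(x,x^*)^{\alpha}$ and $|g_x(y)-g_{x'}(y)|\le K_2\,d(x,x')^{\alpha}$; combined with (A3) this yields
\[
\sup_{g\in\mathcal G_r} Pg^2\le K_2^{2}K_3^{\alpha}\,r^{\alpha\beta},
\]
while (A1) supplies $\|g\|_\infty\le 2K_1$. Applied to $Z_r:=\sup_{g\in \mathcal G_r}(P-P_n)g$, Bousquet's form of Talagrand's concentration inequality gives, with probability at least $1-e^{-t}$,
\[
Z_r\le 2\,\mathbb E Z_r+c_1\,r^{\alpha\beta/2}\sqrt{t/n}+c_2\,t/n.
\]
To bound $\mathbb E Z_r$, observe that by (A3) the slice $\mathcal G_r$ is indexed by $x\in B(x^*,(K_3 r^\beta)^{1/2})$; the H\"older modulus transfers covering numbers from $M$ to $\mathcal G_r$ in $L_2(P)$, and an application of the doubling condition (B1) followed by Dudley's entropy integral produces $\mathbb E Z_r\le C\sqrt{D/n}\,r^{\alpha\beta/2}$, with constants depending only on $(K_1,K_2,K_3,\alpha,\beta)$.

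The last step is peeling to replace the deterministic $r$ by the random $Pg_{x_n}$. On the geometric grid $r_j=2^{j}r_0$ (with an appropriate base level $r_0$ of order $n^{-1/(2-\alpha\beta)}$), I would apply the preceding display to each $Z_{r_j}$ with deviation parameter $t+2\log(j+1)$ and take a union bound, producing an event of probability at least $1-2e^{-t}$ on which
\[
(P-P_n)g_x\le C\bigl(\sqrt{D/n}+\sqrt{t/n}\bigr)(Pg_x)^{\alpha\beta/2}+C\,t/n,\qquad\forall x\in M.
\]
Setting $x=x_n$ and using $Pg_{x_n}\le(P-P_n)g_{x_n}$ yields a self-bounding inequality in $u:=Pg_{x_n}$ whose solution, because $\alpha\beta\le 1<2$, is dominated by $A\max\{(D/n)^{1/(2-\alpha\beta)},(t/n)^{1/(2-\alpha\beta)}\}$. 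The main obstacle is executing this peeling cleanly: the slicing must be defined through $Pg_x$ rather than $d(x,x^*)$, since it is precisely (A3) that converts the metric size of the slice into the small-variance regime, and it is this Bernstein / Mammen--Tsybakov structure that produces the fast exponent $1/(2-\alpha\beta)$ rather than the naive parametric rate $1/\sqrt{n}$.
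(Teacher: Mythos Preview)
Your proposal is correct and follows essentially the same route as the paper: localize via the excess-risk slices $\{x:Pg_x\le r\}$, use (A2)+(A3) to get the Bernstein variance bound $\sigma^2(r)\le K_2^2K_3^{\alpha}r^{\alpha\beta}$, apply Bousquet's inequality for the deviation, control the expected supremum by Dudley chaining combined with (B1), and finally close the self-bounding inequality in $Pg_{x_n}$.

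The only difference is in this last step. The paper does not peel: it invokes a fixed-point lemma (attributed to Koltchinskii) which, from a high-probability bound $\phi_n(\delta)\le b_n(\delta,t)$ valid at each deterministic level $\delta$, directly produces a bound on the random $\delta_n=Pg_{x_n}$ satisfying $\delta_n\le\phi_n(\delta_n)$, via the quantity $b_n(t)=\inf\{\tau:\sup_{\delta\ge\tau}\delta^{-1}b_n(\delta,t\delta/\tau)\le1\}$. Your explicit peeling on a dyadic grid with deviation levels $t+2\log(j+1)$ is the standard alternative and yields the same exponent $1/(2-\alpha\beta)$; the fixed-point formulation is somewhat cleaner in that it avoids having to argue that the extra $\log(j+1)$ contributions are absorbed (here they are, since $|g_x|\le 2K_1$ caps the number of relevant shells at $O(\log n)$), but the two devices are interchangeable.
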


Note that bounds in expectation may be derived from this result, using classical arguments.
As described in section \ref{sec:vi}, \ref{A2} and \ref{A3} hold in several interesting cases for $\alpha=\beta=1$.
In this case, Theorem \ref{thm:upperbound1} exhibits an upper bound of order $D/n$.
A discussion on the optimality of this result is postponed to paragraph \ref{subsec:opt} below.
Next, we shortly comment condition \ref{B1}.
\begin{rem}
With a slight abuse of terminology, condition \ref{B1} is termed doubling condition.
In the literature, the doubling condition usually refers to the situation where inequality
\[
\sup_{x\in M}\sup_{\e>0}\log_2 N(B(x,2\e),d,\e)\le D
\]
holds for some $D\in(0,+\infty)$.
It may be seen that this inequality implies \ref{B1} with $C=2$.
Note however that \ref{B1} is slightly less restrictive as it requires only the control of the covering numbers of balls centered at $x^*$.
This fact is sometimes useful as described in example \ref{exm:lsfamilies} below.
\end{rem}
We now give two examples where assumption \ref{B1} holds.
\begin{exm}
\label{exm:finited}
 Suppose there exists a positive Borel measure $\mu$ on $(\ms,d)$ such that, for some $D>0$, we have
\begin{equation}
\label{exm:finite2}
\forall (x,r)\in M\times(0,+\infty),\quad \alpha_-r^{D}\le \mu(B(x,r))\le \alpha_+r^{D},
\end{equation}
for some constants $0<\alpha_-\le\alpha_+<+\infty$.
Then, for all $x\in\ms$ and all $0<\e\le r$, 
\begin{equation}
\label{exm:finite3}
\frac{\alpha_-}{\alpha_+}\left(\frac{r}{\e}\right)^D\le N(B(x,r),d,\e)\le \frac{\alpha_+}{\alpha_-}\left(\frac{3r}{\e}\right)^D,
\end{equation}
and thus \ref{B1} is satisfied.
The proof is given in Section \ref{sec:proofs}.
Measures $\mu$ satisfying condition \eqref{exm:finite2} are called $D$-regular or Ahlfors-David regular.
Many examples of such spaces are discussed in section 12 in \cite{GrLu00} or section 2.2 in \cite{AT04}.
Note that the present example includes the case where $M$ is a $D$-dimensional compact Riemannian manifold equipped with the volume measure $\mu$.
\end{exm} 

A direct and simple consequence of Example \ref{exm:finited} is that \ref{B1} holds in any $D$-dimensional vector space equipped with any norm since the Lebesgue measure satisfies \eqref{exm:finite2} with $\alpha_-=\alpha_+$.
While simple in essence, this observation allows to exhibit more general parametric families satisfying \ref{B1} as in the next example.

\begin{exm}[Location-scatter family]
\label{exm:lsfamilies}
Here, we detail an example of a subset $M$ of the Wasserstein space $(\mathcal P_2(\R^d),W_2)$ for which assumption \ref{B1} holds.
We say that $M\subset \mathcal P_2(\R^d)$ is a location-scatter family if the following two requirements hold:
\begin{itemize}
    \item[$(1)$] All elements of $M$ have a non-singular covariance matrix.
    \item[$(2)$] For every two measures $\mu_0,\mu_1\in M$, with expectations $m_0$ and $m_1$ and with covariance matrices $\Sigma_0$ and $\Sigma_1$ respectively, the map
\[
T_{01}:x\mapsto(m_1-m_0)+\Sigma_0^{-1/2}\left(\Sigma_0^{1/2}\Sigma_1\Sigma_0^{1/2}\right)^{1/2}\Sigma_0^{-1/2}x
\]
pushes forward $\mu_0$ to $\mu_1$, i.e. $\mu_0(T^{-1}_{01}(A))=\mu_1(A)$ for any Borel set $A\subset \R^d$, which we denote $(T_{01})_{\#}\mu_0=\mu_1$.
\end{itemize}
Such sets have been studied for instance in \cite{alvarez2016fixed}.
The map $T_{01}$ being the gradient of a convex function, the theory of optimal transport guarantees that the coupling $(\mathrm{id}, T_{01})_{\#}{\mu_0}$ is optimal, so that
\begin{align}
    W_2^2(\mu_0,\mu_1)&=\int \|x-T_{01}(x)\|^2{\rm d}\mu_0(x)\nonumber\\
    &=\|m_0-m_1\|^2+\mathrm{tr}\left(\Sigma_0 +\Sigma_1 - 2\left(\Sigma_0^{1/2}\Sigma_1\Sigma_0^{1/2}\right)^{1/2}\right),
    \label{eq:locscat0}
\end{align}
where $\mathrm{tr}(A)$ denotes the trace of matrix $A$ and $\|.\|$ refers to the standard euclidean norm.
Next, we show that such a family satisfies \ref{B1}.
Let $P$ be a probability measure on $M$ and denote $\mu^*\in M$ a barycenter of $P$ with mean $m_*$ and covariance matrix $\Sigma_*$.
For any two measures $\mu_0,\mu_1\in M$, set
\[
T_{*i}(x)=m_i-m_*+ \Sigma_*^{-1/2}\left(\Sigma_*^{1/2}\Sigma_i\Sigma_*^{1/2}\right)^{1/2}\Sigma_*^{-1/2}x,
\]
where $m_i$ and $\Sigma_i$ denote the mean and covariance matrix of $\mu_i$, $i=0,1$.
The pushforward $(T_{*0},T_{*1})_{\#}\mu^*$ is a (possibly suboptimal) coupling between $\mu_0$ and $\mu_1$.
Therefore, 
\begin{align}
W_2^2(\mu_0,\mu_1)&\le \int \|T_{*0}(x)-T_{*1}(x)\|^2 {\rm d}\mu^*(x)\label{eq:locscat}\\
&=\|m_0-m_1\|^2 + \|\Sigma_*^{-1/2}\left(\Sigma_*^{1/2}\Sigma_0\Sigma_*^{1/2}\right)^{1/2}-\Sigma_*^{-1/2}\left(\Sigma_*^{1/2}\Sigma_1\Sigma_*^{1/2}\right)^{1/2}\|_{F}^2,\nonumber
\end{align}
where $\|.\|_{F}$ stands for the Frobenius norm.
Note that $\|(m,A)\|^2_*= \|m\|^2+\|A\|_F^2$ defines a norm $\|.\|_*$ on the vector space $\R^d\times S_d$ where $S_d$ denotes the space of symmetric matrices of size $d\times d$.
Then, define the function $\phi$ that maps each $\mu_{m,\Sigma}$ in the location-scatter family, with mean $m$ and covariance $\Sigma$, to
\[
\phi(\mu_{m,\Sigma})=\left(m,\Sigma_*^{-1/2}\left(\Sigma_*^{1/2}\Sigma\Sigma_*^{1/2}\right)^{1/2}\right).
\]
Then, combining \eqref{eq:locscat0} and \eqref{eq:locscat}, it follows that
\begin{equation}
\label{eq:tanineq}
W_2(\mu_0,\mu_1)\le \|\phi(\mu_0)-\phi(\mu_1)\|_*,
\end{equation}
with equality if $\mu_0=\mu^*$ or $\mu_1=\mu^*$.
Therefore, since $\phi(M)$ is a subset of a vector space of dimension $D=d+d(d+1)/2$, there exists $C>0$ such that for all $\e>0$,
\[
N(B(\mu^*,r),W_2,\varepsilon)\le N(B((m_*,\Sigma^{1/2}_*),r)\cap\phi(M),\|.\|_*,\varepsilon)\le \left(\frac{Cr}{\varepsilon}\right)^{D}.
\]
Hence, \ref{B1} holds.
\end{exm}

The result, derived in example \ref{exm:lsfamilies}, may be generalised to other parametric subsets of the Wasserstein space (or more generally parametric subsets of geodesic Polish spaces with non-negative curvature).
Indeed, since the Wasserstein space over $\R^d$ has non-negative curvature, the support of $P$ pushed forward to the tangent cone at a barycenter $\mu^*$ is isometric to a Hilbert space (this result follows by combining Theorem \ref{thm:yokota} and Lemma \ref{lem:baryisexpbary}) and its norm satisfies \eqref{eq:tanineq} (see Proposition \ref{lem:tangentcone2}).
Therefore it is enough, for \ref{B1} to hold, to require that the image of the support of $P$ by the $\log_{\mu^*}$ map (see paragraph \ref{subsec:cone} for a definition) is included in a finite dimensional vector space.

\subsection{Polynomial metric entropy}
\label{subsec:pme}
Condition \ref{B1} is essentially a finite dimensional behaviour and does not apply in some scenarios of interest.
This paragraph addresses the situation where the complexity of set $\ms$, measured by its metric entropy, is polynomial.
\begin{enumerate}[label=(B2),leftmargin=*]
\item\label{B2} (Polynomial metric entropy) There exists constants $C,D>0$ such that, for all $\e>0$,
\[\log N(\ms,d,\e)\le \left(\frac{C}{\e}\right)^D.\]
\end{enumerate}

\begin{thm}
\label{thm:upperbound2}
Assume that \ref{A1}, \ref{A2}, \ref{A3} and \ref{B2} hold.
Then, for all $n\ge 1$ and all $t>0$,
\[
kd(x^*,x_n)^{\frac{2}{\beta}}\le  \int_M(F(x_n,y)-F(x^*,y))\,{\rm d}P(y) \le A\cdot\max\left\{v_n,\left(\frac{t}{n}\right)^{\frac{1}{2-\alpha\beta}}\right\},
\]
 with probability at least $1-2e^{-t}$, where 
 \[v_n=\left\{\begin{array}{lll}
 n^{-\frac{2}{4-(2\alpha-D)\beta}}&\mbox{ if } D<2\alpha,\\
(\log n)/\sqrt n&\mbox{ if } D=2\alpha,\\
n^{-\frac{\alpha}{D}}&\mbox{ if } D>2\alpha,\\
 \end{array}\right.\]
where $k=K^{-\frac{1}{\beta}}_3$ and $A$ is an explicit constant independent on $n$.
 \end{thm}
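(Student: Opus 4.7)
The plan is to mirror the empirical-process scheme that proves Theorem \ref{thm:upperbound1}, replacing the dyadic covering argument built from \ref{B1} by a Dudley-type chaining bound whose behaviour splits into three regimes according to the sign of $2\alpha-D$.

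First I would introduce, for each $x\in M$, the centred excess function $f_x(y):=F(x,y)-F(x^*,y)$. Since $x_n$ minimises $P_n F(\cdot,y)$, one has $P_n f_{x_n}\le 0$, so $Pf_{x_n}\le (P-P_n)f_{x_n}$. Combining \ref{A2} and \ref{A3} then gives the Bernstein-type control
\[
Pf_x^{2}\;\le\;K_2^2\,d(x,x^*)^{2\alpha}\;\le\;K_2^2 K_3^{\alpha}\,(Pf_x)^{\alpha\beta},
\]
so that on the localised class $\mathcal{F}_r:=\{f_x:Pf_x\le r\}$ both the $L^2(P)$-diameter and the pointwise envelope are of order $r^{\alpha\beta/2}$.

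Next I would carry out the standard peeling: fix a candidate rate $r_*>0$, set $r_k:=2^k r_*$, and on the slice $\{Pf_{x_n}\in(r_{k-1},r_k]\}$ bound $Pf_{x_n}\le \sup_{\mathcal{F}_{r_k}}(P-P_n)f$. Applying Bousquet's version of Talagrand's inequality to each $\mathcal{F}_{r_k}$ with variance $\lesssim r_k^{\alpha\beta}$ and envelope $K_1$, then taking a union bound over $k$, produces the tail term $(t/n)^{1/(2-\alpha\beta)}$ exactly as in Theorem \ref{thm:upperbound1}. The problem thus reduces to controlling $\esp\sup_{\mathcal{F}_r}(P-P_n)f$ uniformly in $r$. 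To do so, I would use \ref{A2} to transfer \ref{B2} to the function class: any $(\eta/2K_2)^{1/\alpha}$-net of $M$ in $d$ induces an $\eta$-net of $\mathcal{F}$ in $L^\infty$, so that
\[
\log N(\mathcal{F},L^2(P),\eta)\;\le\;\log N(\mathcal{F},L^\infty,\eta)\;\le\;(C'/\eta)^{D/\alpha}.
\]
Dudley's inequality now yields
\[
\esp\sup_{\mathcal{F}_r}(P-P_n)f\;\lesssim\;\frac{1}{\sqrt n}\int_0^{\sigma_r}\sqrt{\log N(\mathcal{F},L^2(P),\eta)}\,\mathrm{d}\eta,\qquad \sigma_r\asymp r^{\alpha\beta/2},
\]
and the behaviour of this integral near zero sorts the argument into three cases. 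When $D<2\alpha$ it is convergent, of order $\sigma_r^{1-D/(2\alpha)}\asymp r^{\beta(2\alpha-D)/4}$; solving $r_*\asymp r_*^{\beta(2\alpha-D)/4}/\sqrt n$ returns the rate $n^{-2/(4-(2\alpha-D)\beta)}$. When $D=2\alpha$ it is logarithmically divergent; truncating the chain at resolution $n^{-1/2}$ produces the extra $\log n$ factor and the rate $(\log n)/\sqrt n$. When $D>2\alpha$ it diverges and Dudley must be replaced by a single-scale discretisation at $L^\infty$-resolution $\varepsilon$: balancing the approximation error $\varepsilon^\alpha$ against the concentration term $\sqrt{(C'/\varepsilon)^{D/\alpha}/n}$ obtained by Bernstein/Hoeffding on the $\exp((C'/\varepsilon)^{D/\alpha})$ net points and optimising in $\varepsilon$ yields the rate $n^{-\alpha/D}$.

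The main obstacle is the regime $D>2\alpha$. There the variance gain from \ref{A3} is no longer strong enough to overcome the complexity of the class, Dudley's bound is unusable, and the Bernstein localisation stops improving the rate. The delicate point is to verify that the single-scale discretisation still couples cleanly with the peeling and Talagrand steps above, so that the deviation term $(t/n)^{1/(2-\alpha\beta)}$ is not degraded. The other two regimes follow from essentially routine variants of the argument used to prove Theorem \ref{thm:upperbound1}.
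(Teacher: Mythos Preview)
Your overall architecture matches the paper: localisation of the excess risk via the Bernstein relation derived from \ref{A2}--\ref{A3}, Talagrand/Bousquet concentration, and a Dudley-type bound on the symmetrised process with the entropy of $M$ transferred to the function class via \ref{A2}. The paper uses a fixed-point lemma (Lemma~\ref{lem:fixedpoint}) in place of your explicit dyadic peeling, but these are equivalent devices. Your treatment of the cases $D<2\alpha$ and $D=2\alpha$ is correct and essentially what the paper does.

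The gap is in the regime $D>2\alpha$. You claim that ``Dudley's bound is unusable'' there and propose to replace it by a single-scale discretisation plus Hoeffding on the net. That does \emph{not} recover the rate $n^{-\alpha/D}$. With an $L^\infty$-net at resolution $\varepsilon$ of log-cardinality $(C'/\varepsilon)^{D/\alpha}$, the approximation error is $\varepsilon$ (not $\varepsilon^\alpha$; your bookkeeping is internally inconsistent here), so one is balancing $\varepsilon$ against $n^{-1/2}\varepsilon^{-D/(2\alpha)}$, which yields only $n^{-\alpha/(2\alpha+D)}$. The factor you lose is precisely the chaining gain: the truncated Dudley bound of Lemma~\ref{lem:chain},
\[
\esp\sup_{f}X_f\;\le\;2\varepsilon\,\esp[L]\;+\;12\int_{\varepsilon}^{\sigma_\delta}\sqrt{\log N(u)}\,\mathrm{d}u,
\]
gives, after dividing by $\sqrt n$, a contribution $\varepsilon+n^{-1/2}\int_\varepsilon^{\sigma_\delta}u^{-D/(2\alpha)}\,\mathrm{d}u\asymp\varepsilon+n^{-1/2}\varepsilon^{1-D/(2\alpha)}$, which carries an extra factor $\varepsilon$ compared with single-scale Hoeffding. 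Optimising now gives $\varepsilon\asymp n^{-\alpha/D}$ and hence $\esp\phi_n(\delta)\lesssim n^{-\alpha/D}$ \emph{uniformly in $\delta$}, which is exactly what the paper does: it keeps the same truncated Dudley inequality in all three regimes and only lets the optimal choice of $\varepsilon$ change. So the fix is not to abandon Dudley when $D>2\alpha$, but to use it in its truncated form just as you already did for $D=2\alpha$; the localisation then contributes nothing to the main term (consistent with your remark that the variance gain no longer helps), but the chaining from $\varepsilon$ up is what produces the correct exponent.
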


As for Theorem \ref{thm:upperbound1}, bounds in expectation may be easily derived from this result. The optimality of Theorem \ref{thm:upperbound2} is addressed in paragraph \ref{subsec:opt}.
Next is an example where assumption \ref{B2} applies.

\begin{exm}[Wasserstein space] 
\label{ex:covwass}
Let $B=\{x\in\R^d:\|x-x_0\|_2\le\rho\}$ be a closed Euclidean ball in $\R^d$ and let $\ms=\mathcal P_2(B)$ be the set of square-integrable probability measures supported on $B$ equipped with the 2-Wassertein metric $W_2$.
Combining the result of Appendix A in \cite{bolley2007} with a classical bound on the covering number of euclidean balls, it follows that for all $0<\e\le\rho$,
\[
\log N(M,W_2,\e)\le 2\left(\frac{6\rho}{\e}\right)^{d}\log\left(\frac{8e\rho}{\e}\right).
\]
In particular, for any $D>d$, there exists $C>0$ depending on $D$ and $\rho$ such that, for all $0<\e\le\rho$,
\[
\log N(M,W_2,\e)\le \left(\frac{C}{\e}\right)^{D},
\]
so that \ref{B2} is satisfied for all $D>d$.
\end{exm}

We finally point towards Theorem 3 in \cite{WeedBerthet19} which may be used to derive upper bounds on the covering number of subsets of the $2$-Wasserstein space composed of measures, absolutely continuous with respect to the Lebesgue measure, and with density belonging to some Besov class.

\subsection{On optimality}
\label{subsec:opt}

At the level of generality considered by Theorems \ref{thm:upperbound1} and \ref{thm:upperbound2}, we have not been able to assess the optimality of the given rates for all choices of functional $F$ satisfying the required assumptions and all values of constants $D,\alpha,\beta$.
In particular, it is likely that the rates displayed in Theorem \ref{thm:upperbound2} are artefacts of our proof techniques and that results may be improved in some specific scenarios using additional information on the problem at hand.
However, we discuss below some regimes where our results appear sharp and, on the contrary, settings where our results should allow for improvements.\\

To start our discussion, consider the barycenter problem, i.e. the case where $F(x,y)=d(x,y)^2$.
In the context where $(M,d)$ is a Hilbert space equipped with its usual metric and $P$ is square integrable, explicit computations reveal that  $x_n=\sum_{i=1}^nY_i/n$ is an empirical barycenter of $P$ in the sense of \eqref{eq:xn} and that $x^*=\esp[Y_1]$ (in the sense of the Pettis or Bochner integral) is the unique barycenter of $P$.
In addition, we check that, for all $n\ge 1$,
\begin{equation}
\label{eq:baryhilbert}
\esp d(x_n,x^*)^2=\esp \int_M(d(x_n,x)^2-d(x^*,x)^2)\,\mathrm{d}P(x)=\frac{1}{n}\int_Md(x,x^*)^2\,\mathrm{d}P(x).
\end{equation}
We notice that, under assumptions much more general than those considered in the present paper, the rate of convergence (in expectation) of empirical barycenters in a Hilbert space is of order $1/n$.
While this observation concerns the very special case of Hilbert spaces, we conjecture that the rates of convergence of empirical barycenters is of order $1/n$ in a wide family of metric spaces including Hilbert spaces as a special case.
Identifying precisely this wider family remains an open question but it appears from this discussion that boundedness and complexity restrictions, such as \ref{A1}, \ref{B1} and \ref{B2}, may be unnecessary for the barycenter problem.
Whenever $(M,d)$ is $\R^D$ equipped with a general norm, a very interesting recent contribution, connected to that question, is \cite{LuMe19}.
On a more positive note, we point towards two encouraging aspects encoded in our results in the context of the barycenter problem.
First, consider the case where $(M,d)$ is $\R^D$ equipped with the euclidean metric and suppose that the $Y_i$'s are independent with gaussian distribution $\mathcal N(x^*,\sigma^2)$.
Then, identity \eqref{eq:baryhilbert} reads in this case
\[
\esp d(x_n,x^*)^2=\esp \int_M(d(x_n,x)^2-d(x^*,x)^2)\,\mathrm{d}P(x)=\frac{\sigma^2D}{n}.
\]
It is known, furthermore, that $\sigma^2D/n$ corresponds (up to universal constants) to the minimax rate of estimation of $x^*$ in the context where the $Y_i$'s are i.i.d.
subgaussian random variables with mean $x^*$ and variance proxy $\sigma^2$ (see Chapter 4 in \cite{Rig17}).
Therefore, provided $(M,d)$ is a bounded metric space (which guarantees \ref{A1} and \ref{A2} with $\alpha=1$) and provided assumption \ref{A3} holds for $\beta=1$ (which is often the case as discussed in paragraphs \ref{subsec:negcurv} and \ref{subsec:poscurv} below) Theorem \ref{thm:upperbound1} recovers the optimal rate of convergence $D/n$, up to constants, in a fairly wide context.
Finally, note that while possibly suboptimal in some cases, the rates provided by Theorems \ref{thm:upperbound1} and \ref{thm:upperbound2}, combined with examples \ref{exm:lsfamilies}, \ref{ex:covwass} and discussions of paragraph \ref{subsec:poscurv}, provide up to our knowledge the first rates for the Wasserstein barycenter problem at this level of generality.
An exception is the Wasserstein space over the real line (studied, for instance, in \cite{Bigotandco2018}) which happens to be isometric to a convex subset of a Hilbert space as can be deduced for instance from combining statement (iii) of Proposition 3.5 in \cite{sturm2003} and Proposition 4.1 in \cite{kloeckner2010}.\\ 

Outside from the setting of the barycenter problem, not much is known on optimal rates of estimation or learning (in the sense described at the end of paragraph \ref{subsec:setup}) of $x^*$ defined in \eqref{eq:xstar}.
We believe this question remains mainly open.
It is our impression that the $1/n$ rate, conjectured to hold for empirical barycenters in a wide setup, is a behavior very specific to the case $F(x,y)=d(x,y)^2$.
For more general functionals, we suspect that the complexity of $M$ should have an impact as it is classically the case in nonparametric statistics or learning theory.
Note in particular that whenever parameters $\alpha=\beta=1$ in \ref{A2} and \ref{A3}, the rate $v_n$ in Theorem \ref{thm:upperbound2} becomes
 \[v_n=\left\{\begin{array}{lll}
 n^{-\frac{2}{2+D}}&\mbox{ if } D<2,\\
(\log n)/\sqrt n&\mbox{ if } D=2,\\
n^{-\frac{1}{D}}&\mbox{ if } D>2,\\
 \end{array}\right.\]
which corresponds to known state of the art learning rates, under complexity assumptions in the same flavor as \ref{B2}, as displayed for instance by Theorem 2 in \cite{RaSrTs17}.
However, exact situations under which Theorem \ref{thm:upperbound2} provides optimal rates of convergence remains unclear to us.\\

Finally, note that the second inequality in both Theorems \ref{thm:upperbound1} and \ref{thm:upperbound2} hold for the limiting case $\beta=0$ (with $A$ remaining finite), which correspond to dropping assumption \ref{A3}.
In the context of Theorem \ref{thm:upperbound1} (or that of Theorem \ref{thm:upperbound2} with $D<2\alpha$) the case $\beta=0$ gives rise to a bound of order \[\int_\ms (F(x_n,y)-F(x^*,y))\,{\rm d}P(y)\le \frac{C}{\sqrt n},\] 
with high probability.
Note however that this limiting case does not allow to provide any bound for $d(x_n,x^*)$.

\section{Variance inequalities}
\label{sec:vi}

This section studies conditions implying the validity of \ref{A3}.
The first three paragraphs below focus on the barycenter problem, i.e. the case where $F(x,y)=d(x,y)^2$, and investigate \ref{A3} in the light of curvature bounds.
Aleksandrov curvature bounds of a geodesic space (see paragraph \ref{subsec:curvature}) is a key concept of comparison geometry and many geometric phenomena are known to depend on whether the space $M$ has a curvature bounded from below or above.
In paragraphs \ref{subsec:negcurv} and \ref{subsec:poscurv}, devoted respectively to non-positively and non-negatively curved spaces, we show that curvature bounds also affect statistical procedures through their relation with \ref{A3}.
Finally, paragraph \ref{subsec:conv} addresses the case of a general $F$ and connects \ref{A3} to its convexity properties.
The material presented in this section relies heavily on background in metric geometry gathered in appendix \ref{app:mg} for convenience.

\subsection{Non positive curvature}
\label{subsec:negcurv}

This first paragraph introduces a fundamental insight due to K.T. Sturm, in the context of geodesic spaces of non-positive curvature, that has strongly influenced our study. To put the following result in perspective, we recall that a geodesic space $(M,d)$ is said to have non-positive curvature ($\mathrm{curv}(M)\le 0$ for short) if, for any $p,x,y\in M$ and any geodesic $\gamma:[0,1]\to M$ such that $\gamma(0)=x$ and $\gamma(1)=y$,
\[d(p,\gamma(t))^2\le (1-t)d(p,x)^2+td(p,y)^2-t(1-t)d(x,y)^2,\]
for all $t\in[0,1]$.
Non-positive curvature is given a probabilistic description in the next result.

 \begin{thm}[Theorem 4.9 in \cite{sturm2003}]
 \label{thm:sturm}
 Let $(\ms,d)$ be a separable and complete metric space.
Then, the following properties are equivalent.
 \begin{itemize}
     \item[$(1)$] $(\ms,d)$ is geodesic and ${\rm curv}(\ms)\le 0$.
     \item[$(2)$] Any probability measure $Q\in \mathcal P_2(\ms)$ has a unique barycenter $x^{*}\in \ms$ and, for all $x\in \ms$, 
    \begin{equation}
        \label{eq:vistrurm}
    d(x,x^*)^2\le \int_{\ms}(d(x,y)^2-d(x^*,y)^2)\,{\rm d}Q(y).
    \end{equation}
 \end{itemize}
 \end{thm}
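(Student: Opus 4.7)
The plan is to prove the two implications separately, relying in both directions on the elementary principle that the NPC inequality, applied with three points $(y,x,x^{*})$ and then integrated against $y$, transfers the geometry of $(\ms,d)$ into convexity properties of the variance functional $F(x):=\int_{\ms}d(x,y)^{2}\,{\rm d}Q(y)$.

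For $(1)\Rightarrow(2)$, I would first produce the barycenter by a Cauchy-sequence argument and then deduce the variance inequality along a single geodesic. Let $(x_{n})$ be a minimizing sequence for $F$; since $\ms$ is geodesic, pick a midpoint $m_{n,p}$ of $x_{n}$ and $x_{p}$. The NPC condition at $t=1/2$, with the role of its base point played by $y$, gives
\[
d(y,m_{n,p})^{2}\le \tfrac12 d(y,x_{n})^{2}+\tfrac12 d(y,x_{p})^{2}-\tfrac14 d(x_{n},x_{p})^{2},
\]
and integrating against $Q$ yields $d(x_{n},x_{p})^{2}\le 2\bigl(F(x_{n})+F(x_{p})-2\inf F\bigr)\to 0$, so $(x_{n})$ is Cauchy. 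By completeness it converges to some $x^{*}$, which is a minimizer by continuity of $F$ (a routine dominated-convergence argument). The same midpoint inequality applied to any two minimizers forces uniqueness. For the variance inequality, fix $x\in \ms$, take a geodesic $\gamma:[0,1]\to\ms$ from $x^{*}$ to $x$, and apply NPC:
\[
d(y,\gamma(t))^{2}\le (1-t)d(y,x^{*})^{2}+t\,d(y,x)^{2}-t(1-t)d(x^{*},x)^{2}.
\]
Integrating in $y$ and using $F(\gamma(t))\ge F(x^{*})$ gives $(1-t)d(x^{*},x)^{2}\le F(x)-F(x^{*})$, and letting $t\to 0$ produces (2).

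For $(2)\Rightarrow(1)$, the plan is to extract first midpoints, then a geodesic structure, and finally the NPC four-point condition. Given $x_{0},x_{1}\in\ms$, apply (2) to $Q=\tfrac12(\delta_{x_{0}}+\delta_{x_{1}})$: instantiating the variance inequality at $x=x_{0}$ and at $x=x_{1}$, then summing the two bounds and comparing with the triangle inequality $d(x_{0},x_{1})\le d(x_{0},x^{*})+d(x^{*},x_{1})$, identifies the barycenter $x^{*}$ as a midpoint of $x_{0}$ and $x_{1}$. Midpoints plus completeness then produce geodesics through the classical dyadic bisection, so $(\ms,d)$ is geodesic. For the NPC inequality along a given geodesic $\gamma$ from $x_{0}$ to $x_{1}$, I would apply (2) to the family $Q_{t}:=(1-t)\delta_{x_{0}}+t\delta_{x_{1}}$: testing against $x=x_{0}$ and $x=x_{1}$ identifies the barycenter of $Q_{t}$ as $\gamma(t)$, while testing against an arbitrary $p\in\ms$ yields precisely the NPC four-point inequality.

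The main technical obstacle lies in the general-$t$ step of $(2)\Rightarrow(1)$: identifying the barycenter of $Q_{t}$ with the geodesic point at parameter $t$ requires combining the two variance inequalities with the triangle inequality in such a way that all three must simultaneously be equalities, which is more delicate than the midpoint case (where an additive symmetry makes the algebra collapse immediately). Once the identification is established, the NPC inequality falls out of the variance inequality at $x=p$ essentially by substitution.
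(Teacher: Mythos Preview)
The paper does not supply its own proof of this statement: it is quoted verbatim as Theorem 4.9 of Sturm \cite{sturm2003} and used as a black box. So there is no ``paper's proof'' to compare against; what you have written is essentially a reconstruction of Sturm's original argument, and the overall architecture is correct.

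A couple of remarks on the one step you flag as delicate. For $Q_{t}=(1-t)\delta_{x_{0}}+t\delta_{x_{1}}$ with barycenter $x^{*}$, write $a=d(x_{0},x^{*})$, $b=d(x_{1},x^{*})$, $c=d(x_{0},x_{1})$. Your two variance inequalities (at $x=x_{0}$ and $x=x_{1}$) combine, after multiplying the first by $(1-t)$ and the second by $t$ and adding, to $(1-t)a^{2}+tb^{2}\le t(1-t)c^{2}$. On the other hand the triangle inequality $a+b\ge c$ together with the elementary optimisation $\min\{(1-t)a^{2}+tb^{2}:a+b=s\}=t(1-t)s^{2}$ gives $(1-t)a^{2}+tb^{2}\ge t(1-t)c^{2}$. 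Hence $F(x^{*})=(1-t)a^{2}+tb^{2}=t(1-t)c^{2}=F(\gamma(t))$, so $\gamma(t)$ is also a minimiser and uniqueness forces $\gamma(t)=x^{*}$. The NPC inequality at an arbitrary base point $p$ then drops out exactly as you say. So the ``obstacle'' is real but resolves cleanly once you add the two inequalities with the right weights; you might want to spell this out rather than leave it as a caveat.
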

In words, Theorem \ref{thm:sturm} states in particular that \ref{A3} holds for any possible probability measure $P$ on $M$, with $K_3=1$ and $\beta=1$, provided $\mathrm{curv}(M)\le0$.
It is worth mentioning again that \ref{A1} and \ref{A2} also hold, provided in addition $\mathrm{diam}(M)<+\infty$, so that the case of bounded metric spaces with non-positive curvature fits very well our basic assumptions.
Condition $\mathrm{curv}(M)\le0$ is satisfied in a number of interesting examples.
Such examples include (convex subsets of) Hilbert spaces or the case where $(M,d)$ is a simply connected Riemannian manifold with non-positive sectional curvature. Other examples are metric trees and other metric constructions such as products or gluings of spaces of non-positive curvature (see \cite{BriHaf99}, \cite{BuBuIv01} or \cite{AlKaPe17} for more details).

\subsection{Non negative curvature and extendable geodesics}
\label{subsec:poscurv}
The present paragraph investigates the case of spaces of non-negative curvature.
Contrary to the case of spaces of non-positive curvature, condition \ref{A3} may not hold for every probability measure $P$ on $M$ if $\curv(M)\ge 0$.
Indeed, note that unlike in the case when $\mathrm{curv}(M)\le 0$, there might exist probability measures $P\in\mathcal P_2(M)$ with more than one barycenter whenever $\mathrm{curv}(M)\ge 0$.
A simple example when $M=S^{d-1}$, the unit euclidean sphere in $\R^d$ with angular metric, is the uniform measure on the equator having the north and south poles as barycenters.
Since \ref{A3} implies uniqueness of barycenter $x^*$, this condition disqualifies such probability measures.
Hence, establishing conditions under which \ref{A3} holds is more delicate whenever $\mathrm{curv}(M)\ge 0$.
The next result provides an important first step in this direction. 
\begin{thm}
\label{thm:varequal}
Let $(\ms,d)$ be a separable and complete geodesic space such that ${\rm curv}(\ms)\ge 0$.
Let $P\in\mathcal P_2(\ms)$ and $x^*$ be a barycenter of $P$. Then, for all $x\in \ms$,
\begin{equation}
    \label{eq:vik}
    d(x,x^*)^2 \int_{\ms}k^x_{x^*}(y)\,{\rm d}P(y)= \int_{\ms}(d(x,y)^2-d(x^*,y)^2)\,{\rm d}P(y),
\end{equation}
where, for all $x\ne x^*$ and all $y$, 
\begin{equation}
\label{eq:k}
k^x_{x^*}(y)=1-\frac{\|\log_{x^*}(x)-\log_{x^*}(y)\|^2_{x^*}-d(x,y)^2}{d(x,x^*)^2}.
\end{equation}
Therefore, $P$ satisfies \ref{A3} with $K_3=1/k$ and $\beta=1$ if and only if, for all $x\in M$,
\begin{equation}
\label{eq:kvi}    
k\le \int_{\ms}k^x_{x^*}(y)\,{\rm d}P(y).
\end{equation}
\end{thm}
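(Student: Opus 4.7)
The plan is to reduce the stated identity to an algebraic manipulation in the tangent cone $T_{x^*}M$ combined with the first-order optimality of $x^*$ as a barycenter, exploiting the non-negative curvature hypothesis through the appendix results on Alexandrov geometry (in particular Theorem~\ref{thm:yokota} and Lemma~\ref{lem:baryisexpbary}). The symbol $\langle\cdot,\cdot\rangle_{x^*}$ will denote the bilinear form on $T_{x^*}M$ defined from norms and angles at $x^*$, and $\|\log_{x^*}(z)\|_{x^*}=d(z,x^*)$ throughout.

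The first step, which needs no curvature assumption, is to expand the squared tangent-cone norm by the cone law of cosines,
\[
\|\log_{x^*}(x) - \log_{x^*}(y)\|_{x^*}^2 = d(x,x^*)^2 + d(y,x^*)^2 - 2\langle \log_{x^*}(x), \log_{x^*}(y)\rangle_{x^*},
\]
substitute into \eqref{eq:k}, and simplify. A short computation yields the pointwise identity
\[
d(x,x^*)^2\,k_{x^*}^x(y) = d(x,y)^2 - d(x^*,y)^2 + 2\langle \log_{x^*}(x), \log_{x^*}(y)\rangle_{x^*}.
\]

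The second and decisive step is to verify that, for every $x\in M$,
\[
\int_M \langle \log_{x^*}(x), \log_{x^*}(y)\rangle_{x^*}\,{\rm d}P(y) = 0.
\]
Since $\curv(M)\ge 0$ and $x^*$ is a barycenter of $P$, Theorem~\ref{thm:yokota} combined with Lemma~\ref{lem:baryisexpbary} produces a sub-cone $C\subset T_{x^*}M$, isometric to a Hilbert space, that contains the support of $(\log_{x^*})_{\#}P$ and in which the Bochner integral $\int \log_{x^*}(y)\,{\rm d}P(y)$ vanishes. Because $\langle \log_{x^*}(x),\cdot\rangle_{x^*}$ restricts to a continuous linear functional on the Hilbert subset $C$ (a consequence of the cone bilinear form identities stated in Proposition~\ref{lem:tangentcone2} applied inside $T_{x^*}M$), this vanishing Bochner integral produces the required cancellation for any $x\in M$, including those whose lift $\log_{x^*}(x)$ falls outside $C$.

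Integrating the pointwise identity against $P$ and using the cancellation yields \eqref{eq:vik} directly, and the equivalence with \ref{A3} at $K_3=1/k$ and $\beta=1$ is then an immediate rearrangement of \eqref{eq:vik} against the uniform lower bound \eqref{eq:kvi}. The main obstacle I expect sits in the second step: the plain first-order optimality of $x^*$ in an Alexandrov cone only yields the one-sided inequality $\int\langle v, \log_{x^*}(y)\rangle_{x^*}\,{\rm d}P(y)\le 0$ for $v\in T_{x^*}M$ (since $-v$ need not be an admissible tangent direction), and it is the Hilbert-isometric sub-cone furnished by Yokota's theorem that upgrades this one-sided inequality into the two-sided identity needed here.
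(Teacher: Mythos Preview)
Your approach is correct and mirrors the paper's: there the proof also factors through Lemma~\ref{lem:baryisexpbary} (barycenters are exponential barycenters) and Lemma~\ref{lem:varequal} (the identity for exponential barycenters via Theorem~\ref{thm:yokota}), and the heart of both arguments is the vanishing of $\int_M\langle\log_{x^*}(x),\log_{x^*}(y)\rangle_{x^*}\,{\rm d}P(y)$ for every $x\in M$. The paper routes the pointwise identity through an auxiliary geodesic parameter $t\to 0$; your direct cosine-law expansion is cleaner and equivalent.

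One point to tighten. Your assertion that $\langle\log_{x^*}(x),\cdot\rangle_{x^*}$ restricts to a \emph{linear} functional on the Hilbert sub-cone $C$ is not what Proposition~\ref{lem:tangentcone2} says, and additivity of the cone pairing in the second argument, for a first argument lying outside $C$, is not automatic. What $\curv(T_{x^*}M)\ge0$ does yield (via Proposition~\ref{pro:nongeoboundedcurv} applied to midpoints taken in $C$) is that $u\mapsto\langle\log_{x^*}(x),u\rangle_{x^*}$ is \emph{convex} along $C$-geodesics; Jensen's inequality together with the vanishing Bochner mean then gives $\int\langle\log_{x^*}(x),\log_{x^*}(y)\rangle_{x^*}\,{\rm d}P(y)\ge0$, while the barycenter first-order condition you already flagged in your last paragraph supplies the opposite inequality, and equality follows. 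The paper is equally terse at this step, writing only ``using Theorem~\ref{thm:yokota}''.
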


By definition of a barycenter, the right hand side of \eqref{eq:vik} is non negative.
In addition $\mathrm{curv}(M)\ge 0$ implies that $d(x,y)\le\|\log_p(x)-\log_p(y)\|_p$ for all $x,y,p\in \ms$ (see Proposition \ref{lem:tangentcone2} in appendix \ref{app:mg}).
Combining these two observations with the definition of $k_{x^*}^x(y)$ implies that  
\[
0\le \int_{\ms}k^x_{x^*}(y)\,{\rm d}P(y)\le 1.
\]

The next result identifies a condition under which a variance inequality holds.

\begin{thm}\label{thm:extendgeod}
Let $(\ms,d)$ be a separable and complete geodesic space such that ${\rm curv}(\ms)\ge 0$.
Let $P\in\mathcal P_2(\ms)$ and $x^*$ be a barycenter of $P$.
Fix $\lambda>0$ and suppose that the following properties hold.
\begin{itemize}
	\item[$(1)$] For $P$-almost all $y\in M$, there exists a geodesic $\gamma_y:[0,1]\to \ms$ connecting $x^*$ to $y$ that can be extended to a function $\gamma^{+}_{y}:[0,1+\lambda]\to \ms$ that remains a shortest path between its endpoints. \label{eq:extcond1}
	\item[$(2)$] The point $x^*$ remains a barycenter of the measure $P_{\lambda}=(e_{\lambda})_{\#}P$ where $e_{\lambda}:M\to M$ is defined by $e_{\lambda}(y)=\gamma^{+}_y(1+\lambda)$. \label{eq:extcond2}
\end{itemize}
Then, for all $x \in M$, 
\[d(x,x^*)^2\le \frac{1+\lambda}{\lambda} \int_{\ms}(d(x,y)^2-d(x^*,y)^2)\,{\rm d}P(y),\]
and thus \ref{A3} holds with $K_3=(1+\lambda)/\lambda$ and $\beta=1$.
\end{thm}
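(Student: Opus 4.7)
The plan is to apply the Aleksandrov comparison inequality for spaces with $\mathrm{curv}(M)\ge 0$ along the extended geodesic $\gamma^+_y$, and then exploit the fact that $x^*$ is also a barycenter of the pushed-forward measure $P_\lambda$.

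First, I would reparametrize. For $P$-almost every $y$, define $\tilde\gamma_y:[0,1]\to M$ by $\tilde\gamma_y(s)=\gamma^+_y(s(1+\lambda))$. By hypothesis $(1)$, $\tilde\gamma_y$ is a shortest path from $x^*$ to $e_\lambda(y)$. The key observation is that $y=\tilde\gamma_y\bigl(1/(1+\lambda)\bigr)$ (since $\gamma^+_y(1)=y$), and consequently $d(x^*,e_\lambda(y))=(1+\lambda)\,d(x^*,y)$.

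Second, I would invoke the defining Aleksandrov inequality for $\mathrm{curv}(M)\ge 0$ (the reverse of the CAT(0) inequality recalled in paragraph \ref{subsec:negcurv}) applied to $\tilde\gamma_y$ at $t=1/(1+\lambda)$, with $1-t=\lambda/(1+\lambda)$ and $t(1-t)=\lambda/(1+\lambda)^2$. Using $d(x^*,e_\lambda(y))^2=(1+\lambda)^2 d(x^*,y)^2$ and simplifying, this yields the pointwise bound
\[
d(x,y)^2 \ge \frac{\lambda}{1+\lambda}\,d(x,x^*)^2 + \frac{1}{1+\lambda}\,d(x,e_\lambda(y))^2 - \lambda\, d(x^*,y)^2,
\]
which, after subtracting $d(x^*,y)^2$ from both sides and regrouping terms by the identity $-(1+\lambda)d(x^*,y)^2=-\tfrac{1}{1+\lambda}d(x^*,e_\lambda(y))^2$, is equivalent to
\[
d(x,y)^2-d(x^*,y)^2 \ge \frac{\lambda}{1+\lambda}\,d(x,x^*)^2 + \frac{1}{1+\lambda}\bigl[d(x,e_\lambda(y))^2-d(x^*,e_\lambda(y))^2\bigr].
\]

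Third, I would integrate this inequality with respect to $P$. By the change-of-variables formula, the last term on the right equals $\tfrac{1}{1+\lambda}\int_M\bigl(d(x,z)^2-d(x^*,z)^2\bigr)\,\mathrm{d}P_\lambda(z)$. Hypothesis $(2)$ says that $x^*$ is a barycenter of $P_\lambda$, so this integral is nonnegative. Dropping that nonnegative term and rearranging gives exactly
\[
d(x,x^*)^2 \le \frac{1+\lambda}{\lambda}\int_M\bigl(d(x,y)^2-d(x^*,y)^2\bigr)\,\mathrm{d}P(y),
\]
which is the desired variance inequality with $K_3=(1+\lambda)/\lambda$ and $\beta=1$.

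There is no real obstacle here once one notices that $y$ sits on the geodesic from $x^*$ to $e_\lambda(y)$ precisely at parameter $1/(1+\lambda)$; the remainder is an algebraic rearrangement of the Aleksandrov inequality. The subtlety to watch for is measurability of $y\mapsto e_\lambda(y)$, needed so that $P_\lambda=(e_\lambda)_\#P$ is well defined and the inequality can legitimately be integrated; this is implicit in hypothesis $(2)$ and can be justified by a standard measurable-selection argument on the space of geodesic extensions.
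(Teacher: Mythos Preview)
Your proof is correct. It is in fact more direct than the paper's own argument: the paper works in the tangent cone $T_yM$ at each $y$, uses that $\mathrm{curv}(T_yM)\ge 0$ to obtain the comparison inequality for the cone norm $\|\cdot\|_y$, and only at the end transfers back to $M$ via $d(u,v)\le\|u-v\|_y$ (with equality when one of the points is $y$). You bypass this detour entirely by applying the Aleksandrov inequality of Proposition~\ref{pro:NNC} directly in $M$ to the geodesic $\tilde\gamma_y$ from $x^*$ to $e_\lambda(y)$ at parameter $t=1/(1+\lambda)$. The resulting pointwise inequality is the same, and both proofs conclude identically by integrating and invoking hypothesis~$(2)$ to discard the nonnegative $P_\lambda$-term. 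Your route is shorter and avoids any appeal to the tangent-cone machinery; the paper's approach, while heavier here, is consistent with its broader use of tangent cones (e.g.\ in Theorem~\ref{thm:varequal}). Your remark on measurability of $e_\lambda$ is apt and matches the paper's implicit treatment.
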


Examples of geodesic spaces of non-negative curvature include (convex subsets of) Hilbert spaces or the case where $(M,d)$ is a simply connected Riemannian manifold with non-negative sectional curvature.
Next is a simple example where the condition $(1)$ of Theorem \ref{thm:extendgeod}, i.e. the ability to extend geodesics, takes a simple form.
\begin{exm}[Unit sphere]
	\label{ex:unitsph}
Let $M=S^{d-1}$ be the unit Euclidean sphere in $\R^d$ equipped with the angle metric.
Let $P\in\mathcal P_2(M)$ be such that it has a unique barycenter $x^*\in M$.
In $M$, a shortest path between two points is a part of a great circle and a part of a great circle is a shortest path between its endpoints if, and only if, it has length less than $\pi$.
Therefore, if a neighborhood $V$ of $C(x^*)=\{-x^*\}$, the cut locus of $x^*$, satisfies $P(V)=0$, then condition $(1)$ of Theorem \ref{thm:extendgeod} is satisfied for some $\lambda>0$.
Note however that condition $(1)$ is not enough to give a variance inequality in general.
Indeed, consider the uniform measure $P$ on the equator with the north and south poles for barycenters.
Then, for $P$-almost all $y\in M$, the geodesic connecting the south pole $x^*$ to $y$ can be extended by a factor $1+\lambda=2$ in the sense of $(1)$ in the above theorem.
However, since there is no unique barycenter, no variance inequality can hold in this case.
Therefore, requirement $(2)$ cannot be dropped in Theorem \ref{thm:extendgeod}.
\end{exm}

In the rest of this paragraph, we provide a sufficient condition for the extendable geodesics condition $(1)$ of Theorem \ref{thm:extendgeod} to hold in the context where $(M,d)=(\mathcal P_2(H),W_2)$ is the Wasserstein space over a Hilbert space $H$.
In this case, $(M,d)$ is known to have non-negative curvature (see Section 7.3 in \cite{ambrosio2008gradient}).
We recall the following definition.
For a convex function $\phi:H\to \R$, its subdifferential $\partial\phi\subset H^2$ is defined by 
\[
(x,y)\in\partial\phi \Leftrightarrow \forall z\in H,\quad\phi(z)\ge\phi(x)+\langle y,z-x\rangle.
\]
Then we can prove the following result.
\begin{thm}
\label{thm:extendgeodp2}
Let $(M,d)=(\mathcal P_2(H),W_2)$ be the Wasserstein space over a Hilbert space $H$.
Let $\mu$ and $\nu$ be two elements of $M$ and let $\gamma:[0,1]\to S$ be a geodesic connecting $\mu$ to $\nu$ in $M$.
Then, $\gamma$ can be extended by a factor $1+\lambda$ (in the sense of $(1)$ in Theorem \ref{thm:extendgeod}) if, and only if, the support of the optimal transport plan $\pi$ of $(\mu,\nu)$ lies in the subdifferential $\partial\phi$ of a $\frac{\lambda}{1+\lambda}$-strongly convex map $\phi:H\to \R$.
\end{thm}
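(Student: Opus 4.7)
The plan is to reduce the extendability of the geodesic in $(\mathcal P_2(H), W_2)$ to an optimality property of an associated coupling, and then apply the characterization of optimal plans in Hilbert space in terms of cyclically monotone supports.

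First, I would use the standard fact that a constant-speed geodesic from $\mu$ to $\nu$ in $(\mathcal P_2(H), W_2)$ is of the form $\gamma(t) = \bigl((1-t)\mathrm{pr}_1 + t\,\mathrm{pr}_2\bigr)_{\#}\pi$, where $\pi$ is an optimal coupling between $\mu$ and $\nu$. The natural candidate for an extension by factor $1+\la$ is $\gamma^+(t) = \bigl((1-t)\mathrm{pr}_1 + t\,\mathrm{pr}_2\bigr)_{\#}\pi$ for $t\in[0,1+\la]$, which reaches $\nu^+ := \bigl(-\la\,\mathrm{pr}_1 + (1+\la)\mathrm{pr}_2\bigr)_{\#}\pi$. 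The coupling $\pi^+ := \bigl(\mathrm{pr}_1,\,-\la\,\mathrm{pr}_1+(1+\la)\mathrm{pr}_2\bigr)_{\#}\pi$ between $\mu$ and $\nu^+$ has transport cost $(1+\la)^2 W_2^2(\mu,\nu)$, so $W_2(\mu,\nu^+)\le (1+\la)W_2(\mu,\nu)$. The natural extension $\gamma^+$ is a shortest path between its endpoints if and only if this inequality is an equality, if and only if $\pi^+$ is an optimal transport plan between $\mu$ and $\nu^+$.

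Second, by the Knott-Smith theorem in a separable Hilbert space, a coupling with finite quadratic cost is optimal if and only if its support is cyclically monotone, which by Rockafellar's theorem is equivalent to being contained in the subdifferential $\partial\psi$ of a proper lower semicontinuous convex function $\psi:H\to\R\cup\{+\infty\}$. Applied to $\pi^+$, this reads: $\pi^+$ is optimal iff there exists such a $\psi$ with $-\la x + (1+\la) y\in\partial\psi(x)$ for every $(x,y)\in\supp(\pi)$. I would then introduce the change of variable $\phi := \tfrac{1}{1+\la}\psi + \tfrac{\la}{2(1+\la)}\|\cdot\|^2$, which is a bijection between proper lsc convex $\psi$ and proper lsc $\tfrac{\la}{1+\la}$-strongly convex $\phi$; the Moreau-Rockafellar sum rule, applied to the addition of a $C^1$ convex term, gives $y\in\partial\phi(x)$ iff $(1+\la) y - \la x \in\partial\psi(x)$. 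This establishes the equivalence between the condition $\supp(\pi)\subset\partial\phi$ with $\phi$ of strong convexity $\tfrac{\la}{1+\la}$ and optimality of $\pi^+$, yielding both directions of the theorem.

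The main obstacle I anticipate is twofold. First, one must invoke a version of Knott-Smith / Brenier that is valid in an infinite dimensional separable Hilbert space: this is available (e.g.\ following Ambrosio-Gigli-Savar\'e), but care is needed with measurability and with the proper lsc structure of the convex potential. Second, for the \emph{only if} direction one needs to argue that any geodesic extension $\gamma^+$ of $\gamma$ actually arises from the natural extension through $\pi^+$; this follows from the characterization of Wasserstein geodesics as displacement interpolations along optimal plans, combined with the fact that the plan underlying $\gamma^+$ must be compatible with the plan $\pi$ underlying $\gamma|_{[0,1]}$, which forces the linear identification used above.
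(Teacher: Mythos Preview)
Your proposal is correct and follows essentially the same route as the paper: both reduce extendability to optimality of the stretched plan $\pi^+=(\mathrm{pr}_1,\,-\lambda\,\mathrm{pr}_1+(1+\lambda)\mathrm{pr}_2)_{\#}\pi$, invoke the Knott--Smith criterion, and use the same affine change of potential $\phi\leftrightarrow\psi=(1+\lambda)\phi-\tfrac{\lambda}{2}\|\cdot\|^2$. The only cosmetic difference is that you invoke the Moreau--Rockafellar sum rule to pass between $\partial\phi$ and $\partial\psi$, whereas the paper establishes the same equivalence by hand via a short lemma (showing that the relaxed subgradient inequality with an extra $-c\|x-z\|^2$ term still characterizes $\partial\psi$ when $\psi$ is convex); and for the ``only if'' direction the paper cites Theorem~7.2.2 of Ambrosio--Gigli--Savar\'e to justify exactly the compatibility of plans that you flag as the second obstacle.
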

Again, like in example \ref{ex:unitsph}, this gives a condition that ensures the validity of $(1)$ in Theorem \ref{thm:extendgeodp2}, but it can be shown that it is not enough to obtain a variance inequality.

\subsection{Convexity}
\label{subsec:conv}

Here, we connect \ref{A3} to convexity properties of functional $F$ along paths in $\ms$.

\begin{defi}[$(k,\beta)$-convexity]
Given $k>0$, $\beta\in(0,1]$ and a path $\gamma:[0,1]\to\ms$, a function $f:\ms\to \R$ is called $(k,\beta)$-convex along $\gamma$ if the function
\[t\in[0,1]\mapsto f(\gamma(t))-kd(\gamma(0),\gamma(1))^{\frac{2}{\beta}}t^2\]
is convex.
If $(M,d)$ is geodesic, a function $f:\ms\to \R$ is called  $(k,\beta)$-geodesically convex if, for all $x,x'\in\ms$, $f$ is $(k,\beta)$-convex along at least one geodesic connecting $x$ to $x'$.
\end{defi}

In the sequel, we abbreviate $(k,1)$-convexity by $k$-convexity.
When $M$ is geodesic, a $(k,\beta)$-convex function $f:M\to\R$ refers to a $(k,\beta)$-geodesically convex function unless stated otherwise.
Note that $(k,\beta)$-convexity is a special case of uniform convexity (see Definition 1.6. in \cite{sturm2003}). We start by a general result.
\begin{thm}
\label{thm:fvi}
Let $k>0$ and $\beta\in(0,1]$.
Suppose that, for all $x\in \ms$, there exists a path connecting $x^*$ to $x$ along which the function 
\[x\in \ms\mapsto \int_{M} F(x,y)\,{\rm d}P(y)\]
is $(k,\beta)$-convex.
Then, for all $x\in \ms$,
\begin{equation}
\label{eq:gvi}
d(x,x^*)^2\le \left(\frac{1}{k}\int_{M}(F(x,y)-F(x^*,y))\,{\rm d}P(y)\right)^{\beta},
\end{equation}
and hence \ref{A3} holds for $K_3=k^{-\beta}$.
In particular, the assumption of the theorem holds whenever, for all $x\in \ms$ and ($P$-almost) all $y\in M$, there exists a path connecting $x^*$ to $x$ along which the function $F(.,y)$ is itself $(k,\beta)$-convex.
A special case in when $(M,d)$ is geodesic and, for ($P$-almost) all $y\in M$, the functional $x\in\ms\mapsto F(x,y)$ is itself $(k,\beta)$-geodesically convex.
\end{thm}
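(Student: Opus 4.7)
The plan is to exploit the strong convexity inequality provided by $(k,\beta)$-convexity, then send a parameter to zero in order to extract the one-sided growth bound at the minimizer $x^*$.

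Set $G(x) := \int_M F(x,y)\,\mathrm{d}P(y)$ and fix $x\in M$ with $x\ne x^*$ (the case $x=x^*$ being trivial). Let $\gamma:[0,1]\to M$ be the path furnished by the hypothesis, so that $\gamma(0)=x^*$, $\gamma(1)=x$, and by definition of $(k,\beta)$-convexity the map
\[
h(t) := G(\gamma(t)) - k\,d(x^*,x)^{2/\beta}\,t^{2}
\]
is convex on $[0,1]$. Applying convexity of $h$ between its endpoints yields, for every $t\in(0,1)$,
\[
h(t)\le (1-t)\,h(0) + t\,h(1),
\]
which, once rearranged and combined with the defining inequality $G(\gamma(t))\ge G(x^*)$ (since $x^*$ minimizes $G$), becomes
\[
0\le G(\gamma(t))-G(x^*)\le t\bigl(G(x)-G(x^*)\bigr)-k\,t(1-t)\,d(x^*,x)^{2/\beta}.
\]
Dividing by $t>0$ and letting $t\to 0^{+}$ gives $k\,d(x^*,x)^{2/\beta}\le G(x)-G(x^*)$. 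Raising to the power $\beta\in(0,1]$ yields \eqref{eq:gvi}, and hence \ref{A3} with $K_3=k^{-\beta}$.

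For the two "in particular" clauses, I would observe that if, for a given $x$, one can find a single path $\gamma$ (independent of $y$) along which $F(\cdot,y)$ is $(k,\beta)$-convex for $P$-almost every $y$, then for each such $y$ the map $t\mapsto F(\gamma(t),y) - k\,d(x^*,x)^{2/\beta}\,t^{2}$ is convex on $[0,1]$. Integrating in $y$ (using linearity of the integral, which preserves convexity) shows that $G$ is itself $(k,\beta)$-convex along the same $\gamma$, and the main argument applies. The geodesic special case is a direct specialization when $(M,d)$ is geodesic and a common geodesic can serve all $y$.

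The proof presents no serious obstacle: it is the standard "strong convexity forces quadratic growth at the minimizer" trick, adapted to exponent $2/\beta$ by raising the final inequality to the power $\beta$. The only subtle point is the legitimacy of selecting the auxiliary path independently of $y$ in the "in particular" clauses, which is implicitly required by any reading of the hypothesis that makes the integral inherit $(k,\beta)$-convexity from its integrand.
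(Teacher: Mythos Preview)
Your proof is correct and follows essentially the same route as the paper: both derive the inequality $G(\gamma(t))\le (1-t)G(x^*)+tG(x)-kt(1-t)d(x^*,x)^{2/\beta}$ from $(k,\beta)$-convexity, use minimality of $x^*$ to drop the $G(\gamma(t))-G(x^*)$ term, divide and let $t\to 0$. The only cosmetic difference is that the paper divides by $t(1-t)$ while you divide by $t$, which amounts to the same thing. Your remark that the ``in particular'' clauses implicitly require a path chosen independently of $y$ is a valid observation; the paper does not spell this out.
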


The previous result is deliberately stated in a general form.
This allows to investigate several notions of convexity that may coexists as in the space of probability measures.

\begin{rem}
\label{rem:convvi}
For some spaces, such as the Wasserstein space over $\R^d$, there exist two canonical paths between two points $\mu$ and $\nu$.
One is the geodesic $\gamma_t$ defined by the fact that $W_2(\gamma_t,\gamma_s)=|s-t|W_2(\mu,\nu)$ for $s,t\in[0,1]$ (which needs not be unique).
A second one is the linear interpolation between the two measures $\ell_t=(1-t)\mu+t\nu$.
It may be that $x\mapsto \int F(x,y){\rm d}P(y)$ is strongly convex along only one of these two paths.
This case is further discussed in section \ref{sec:exmps}.
\end{rem}

We end by discussing the notion of a $k$-convex metric space.
In the remainder of the paragraph, we consider $F(x,y)=d(x,y)^2$.
   
\begin{defi}
\label{def:kconv}
A geodesic space $(\ms,d)$ is said to be $k$-convex if, for all $y\in\ms$, the function $x\in\ms\mapsto d(x,y)^2$ is $k$-geodesically convex.
\end{defi}
\noindent Using Theorem \ref{thm:fvi}, it follows that if $\ms$ is $k$-convex, then \ref{A3} holds with $\beta=1$ and $K_3=1/k$.
When $k=1$, note that a geodesic space is $k$-convex if and only if ${\rm curv}(M)\le 0$ (see Proposition \ref{pro:NNC}).
When $k\ne 1$, the connection between curvature bounds and $k$-convexity of a metric space is not straightforward. In particular, $k$-convex metric spaces include many interesting spaces, for which condition ${\rm curv}(\ms)\le 0$ does not hold. We give two examples.

\begin{exm}[Proposition 3.1. in \cite{ohta2007}]
\label{thm:ohtastyle}
Let $\ms$ be a geodesic space with $\mathrm{curv}(\ms)\le 1$ and $\mathrm{diam}(\ms)<\pi/2$.
Then $\ms$ is $k$-convex for 
\[
k=4\,\mathrm{diam}(\ms)\tan\left(\frac{\pi}{2}-\mathrm{diam}(\ms)\right).
\]
Implication of this result can be compared to Theorem \ref{thm:extendgeod}.
In the context of $M\subset S^{2}$, considered in example \ref{ex:unitsph}, the above result states that $M$ is $k$-convex provided it is included in the interior of an $1/8$th of a sphere.
In comparison, Theorem \ref{thm:extendgeod} expresses that a variance inequality \ref{A3} may hold for a measure supported on the whole sphere minus a neighbourhood of the cut locus of the barycenter.

\end{exm}

\begin{exm}[Theorem 1 in \cite{BCL94}] If $(S,\mathcal S,\mu)$ is a measured space and $p\in(1,2]$, then $M=L^p(\mu)$ is $(p-1)$-convex.
\end{exm}

\section{Further examples}
\label{sec:exmps}

We describe additional examples, different from the usual barycenter problem, where our results apply.
In these examples, we focus mainly on functionals over the Wasserstein space.
Paragraphs \ref{subsec:fdiv} addresses the case of $f$-divergences.
Subsection \ref{subsec:interenergy} discusses interaction energy.
In \ref{subsec:approxbary} we consider several examples related to the approximation of barycenters.

\subsection{\texorpdfstring{$f$-divergences}{f-divergence}}
\label{subsec:fdiv}
We call $f$-divergence a functional of the form
\begin{equation}
\label{eq:relativef}
D_f(\mu,\nu):=\left\{\begin{array}{ll}
\int f\left(\frac{{\rm d}\mu}{{\rm d}\nu}\right)\,{\rm d}\nu &\mbox{ if }\mu\ll \nu,\\
+\infty&\mbox{ otherwise,}\
\end{array}\right.
\end{equation}
for some convex function $f:\R_+\to \R$.
Such functionals are also known as internal energy, relative functionals or Csisz\'ar divergences.
Specific choices for function $f$ give rise to well known examples.
For instance, $f(x)=x\log x$ gives rise to the Kullback-Leibler divergence (or relative entropy).
Minimisers of the average Kullback-Leibler divergence, or its symmetrised version, have been considered for instance in \cite{veldhuis2002centroid} for speech synthesis.
Other functions like $f(x)=(x-1)^2$ or $f(x)=|x-1|/2$ lead respectively to the chi-squared divergence or the total variation.
The next results present sufficient conditions under which \ref{A1}, \ref{A2} and \ref{A3} hold in this case.
First, suppose $M\subset \mathcal P_2(E)$ where $E\subset \R^d$ is a convex set.
Note that \ref{A1} holds if there exists $0<c_-<c_+<+\infty$ and a reference measure such that all $\mu\in M$ have a density $g_{\mu}$ with respect to this measure with values in $[c_-,c_+]$ on their support.
Next, we show that \ref{A2} holds under related conditions.

\begin{thm}
\label{thm:dfA2}
Suppose that all measures $\mu\in M$ have a density $g_{\mu}$ with respect to some reference measure.
Suppose there exist $0<c_-<c_+<+\infty$ such that all $g_{\mu}$ take values in $[c_-,c_+]$.
Suppose in addition that there exists $\Lambda>0$ such that all $g_\mu$ are $\Lambda$-Lipschitz on $E$.
Assume finally that $f$ is differentiable and that $f'$ is $L$-Lipschitz on $E$.
Then, for all $\nu,\mu,\mu'\in M$,
\[|D_f(\mu,\nu)-D_f(\mu',\nu)|\le \frac{2L\Lambda c_+}{c^2_-}W_2(\mu,\mu'),\]
so that \ref{A2} holds for $K_2=2L\Lambda c_+/c^2_-$ and $\alpha=1$.
\end{thm}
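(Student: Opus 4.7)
The plan is to linearise the difference $D_f(\mu,\nu)-D_f(\mu',\nu)$ in the densities along their linear interpolation, rewrite the result as the integral of a single test function against the signed measure $\mu-\mu'$, and then invoke Kantorovich--Rubinstein duality to trade this integral for $W_1(\mu,\mu')\le W_2(\mu,\mu')$. A naive route via a total variation bound on $\mu-\mu'$ would not produce the stated $W_2$ dependence under only $\Lambda$-Lipschitzness of the densities.

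Let $\lambda$ be the common reference measure, so that $\mu=g_\mu\lambda$, $\mu'=g_{\mu'}\lambda$, $\nu=g_\nu\lambda$, and set $g_t=(1-t)g_{\mu'}+t g_\mu$ for $t\in[0,1]$. Applying the fundamental theorem of calculus to $f$ between the points $g_{\mu'}/g_\nu$ and $g_\mu/g_\nu$, multiplying by $g_\nu$, and swapping the integrals via Fubini, I would obtain the identity
\begin{equation*}
D_f(\mu,\nu)-D_f(\mu',\nu)=\int_0^1\!\int_E h_t\,d(\mu-\mu')\,dt,\qquad h_t(x):=f'\!\left(\frac{g_t(x)}{g_\nu(x)}\right),
\end{equation*}
the crucial cancellation being that the $1/g_\nu$ produced by the chain rule cancels the $g_\nu$ weight in $D_f$, converting the factor $(g_\mu-g_{\mu'})\,d\lambda$ into $d(\mu-\mu')$.

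Next I would estimate the Lipschitz constant of $h_t$ on $E$. Writing
\begin{equation*}
\frac{g_t(x)}{g_\nu(x)}-\frac{g_t(y)}{g_\nu(y)}=\frac{g_t(x)\bigl(g_\nu(y)-g_\nu(x)\bigr)+g_\nu(x)\bigl(g_t(x)-g_t(y)\bigr)}{g_\nu(x)g_\nu(y)},
\end{equation*}
and using that both $g_t$ (a convex combination of $\Lambda$-Lipschitz densities) and $g_\nu$ are $\Lambda$-Lipschitz, together with $g_t\le c_+$ and $g_\nu\ge c_-$, one gets that $g_t/g_\nu$ is $(2\Lambda c_+/c_-^2)$-Lipschitz. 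Composing with $f'$, which is $L$-Lipschitz on the relevant range $[c_-/c_+,c_+/c_-]$, then yields $\mathrm{Lip}(h_t)\le 2L\Lambda c_+/c_-^2 = K_2$.

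Finally, Kantorovich--Rubinstein duality gives, for each $t\in[0,1]$,
\begin{equation*}
\left|\int_E h_t\,d(\mu-\mu')\right|\le \mathrm{Lip}(h_t)\,W_1(\mu,\mu')\le K_2\,W_2(\mu,\mu'),
\end{equation*}
the last inequality being $W_1\le W_2$ from Jensen. Integrating in $t$ closes the argument. I do not anticipate a real obstacle here; the only step requiring care is the quotient Lipschitz estimate, which is precisely where the constant $c_+/c_-^2$ enters and fixes the shape of $K_2$.
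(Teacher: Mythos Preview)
Your argument is correct and essentially identical to the paper's: the paper writes the difference as $\int \delta_{\mu,\mu'}\,d(\mu-\mu')$ with $\delta_{\mu,\mu'}(x)=\int_0^1 f'((1-t)a_\mu(x)+ta_{\mu'}(x))\,dt$, which is precisely $\int_0^1 h_t\,dt$ in your notation, then bounds the Lipschitz constant of this test function via the same quotient estimate and applies Kantorovich--Rubinstein together with $W_1\le W_2$. The only cosmetic difference is that you apply duality for each $t$ and then integrate, whereas the paper integrates first and applies duality once; also, in your quotient bound you should note that $g_\nu\le c_+$ (not only $g_\nu\ge c_-$) is used to control the second numerator term, but this is of course part of the hypotheses.
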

 The next result is devoted to condition \ref{A3} and is a slight modification of Theorem 9.4.12 in \cite{ambrosio2008gradient}.
Below we say that $\nu\in M$ is $\lambda$-log-concave if $\mathrm{d}\nu(x)=e^{-U(x)}\mathrm{d}x$ for some $\lambda$-convex $U:E\to \R$.
\begin{thm}
\label{thm:dfA3}
Consider $h(s)=e^{s}f(e^{-s})$.
Suppose that $h$ is convex and that there exists $c>0$ such that, for all $u\in \R$ and all $\e>0$, $h(u)-h(u+\e)\ge c\e$.
Suppose finally that $M$ is a geodesically convex subset of $\mathcal P_2(E)$.
Then the following holds.
\begin{itemize}
    \item[$(1)$] For any $\lambda$-log-concave $\nu\in M$, the functional $\mu\in M\mapsto D_f(\mu,\nu)$ is $c\lambda$-geodesically convex.
    \item[$(2)$] Let $\lambda>0$ and suppose $P\in \mathcal P(M)$ is supported on $\lambda$-log-concave measures in $M$.
Then for any minimizer 
    \[\mu^*\in\underset{\mu\in M}{\arg\min}\int_M D_{f}(\mu,\nu)\,\mathrm{d}P(\nu),\]
    and any $\mu\in M$,
    \[W_2(\mu,\mu^*)^2\le\frac{1}{c\lambda}\int_M(D_f(\mu,\nu)-D_f(\mu^*,\nu))\,\mathrm{d}P(\nu),\]
    and thus \ref{A3} holds with $K_3=1/c\lambda$ and $\beta=1$.
\end{itemize}
\end{thm}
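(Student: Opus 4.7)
The two parts are tightly linked: given Part $(1)$, Part $(2)$ follows by integrating the geodesic convexity in $\nu\sim P$ and invoking Theorem~\ref{thm:fvi}, provided a single geodesic can be chosen that witnesses strong convexity for all $\nu$ simultaneously. So the core is Part $(1)$, which is a uniform $c\lambda$-geodesic convexity of $\mu\mapsto D_f(\mu,\nu)$ when $\nu$ is $\lambda$-log-concave. This is the content of Theorem~9.4.12 in \cite{ambrosio2008gradient}, and I would follow its proof: a direct second-derivative computation along the McCann displacement interpolation.

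Concretely, write $\nu=e^{-U}\mathrm{d}x$ with $U$ being $\lambda$-convex, and let $\mu_0,\mu_1\in M$ have finite $D_f$-divergence against $\nu$ (hence densities $\rho_0,\rho_1$). Let $T$ be the Brenier optimal transport map from $\mu_0$ to $\mu_1$, set $T_t=(1-t)\mathrm{id}+tT$, and consider the Wasserstein geodesic $\mu_t=(T_t)_{\#}\mu_0$. Using the identity $f(e^{-s})=e^{-s}h(s)$, the Monge--Amp\`ere relation $\rho_t(T_t(x))=\rho_0(x)/\det DT_t(x)$, and the change of variables $y=T_t(x)$, one rewrites
\[
D_f(\mu_t|\nu)=\int_E h(a(t,x))\,\mathrm{d}\mu_0(x),\qquad a(t,x):=-\log\rho_0(x)+\log\det DT_t(x)-U(T_t(x)).
\]
Differentiating twice under the integral, convexity of $h$ yields $h''(a)a_t^2\ge 0$, so $\partial_t^2 h(a)\ge h'(a)\,a_{tt}$. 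The hypothesis $h(u)-h(u+\varepsilon)\ge c\varepsilon$ forces $h'(u)\le -c$ everywhere; the concavity of $t\mapsto(\det((1-t)I+t\,DT(x)))^{1/d}$ (Brunn--Minkowski/McCann, valid because $DT$ is a.e.\ symmetric positive as the Hessian of a convex Brenier potential) gives $(\log\det DT_t)_{tt}\le 0$; and $\lambda$-convexity of $U$ along the segment $T_t(x)$ yields $\frac{d^2}{dt^2}U(T_t(x))\ge\lambda\|T(x)-x\|^2$. Combining, $a_{tt}\le -\lambda\|T(x)-x\|^2$, whence
\[
\partial_t^2 D_f(\mu_t|\nu)\ge c\lambda\int_E \|T(x)-x\|^2\,\mathrm{d}\mu_0(x)=c\lambda\,W_2^2(\mu_0,\mu_1),
\]
which, in the paper's convention, is the $c\lambda$-geodesic convexity asserted in Part $(1)$.

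For Part $(2)$, fix a minimiser $\mu^*$ of $\mu\mapsto\int D_f(\mu,\nu)\,\mathrm{d}P(\nu)$. Since $D_f(\mu^*|\nu)$ is finite for $P$-a.e.\ $\nu$ and each such $\nu$ is absolutely continuous, $\mu^*\ll\nu\ll\mathrm{Leb}$, so $\mu^*$ is itself absolutely continuous. By McCann's theorem the Wasserstein geodesic from $\mu^*$ to any $\mu\in M$ is then unique, and Part $(1)$ applied with $\mu_0=\mu^*,\mu_1=\mu$ shows that $D_f(\cdot,\nu)$ is $c\lambda$-convex along this common geodesic for every $\nu$. Integrating in $\nu$ and applying Theorem~\ref{thm:fvi} with $\beta=1$ yields the variance inequality with $K_3=1/(c\lambda)$. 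The main technical obstacle is the rigorous justification of the second-derivative computation and the exchange of $\partial_t^2$ with the integral, since $T$ is only $\mu_0$-a.e.\ defined and $DT$ is Borel but not continuous; this is handled by the approximation and semi-convexity arguments of optimal transport and is essentially the content of the cited AGS result. A secondary bookkeeping point is to check the factor between the paper's $(k,1)$-convexity convention (which yields $(f\circ\gamma)''\ge 2k\,d(\gamma(0),\gamma(1))^2$) and the conclusion above, so as to land on exactly $K_3=1/(c\lambda)$.
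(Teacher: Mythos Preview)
Your approach is essentially the same as the paper's: both reduce to the identity
\[
D_f(\mu_t,\nu)=\int h(s(t,x))\,\rho_0(x)\,{\rm d}x,\qquad s(t,x)=-U(T_t(x))+\log\det DT_t(x)-\log\rho_0(x),
\]
and exploit concavity of $t\mapsto\log\det DT_t(x)$, $\lambda$-convexity of $U$, and the decrease-plus-convexity hypotheses on $h$. The only difference is that you argue via second derivatives $\partial_t^2 h(s(t,x))\ge h'(s)\,s_{tt}$, whereas the paper works entirely with the inequality forms: first
\[
s(t,x)\ge (1-t)s(0,x)+ts(1,x)+\lambda t(1-t)\|x-T(x)\|^2,
\]
then, using that $h$ is decreasing, convex, and satisfies $h(u)-h(u+\varepsilon)\ge c\varepsilon$,
\[
h(s(t,x))\le (1-t)h(s(0,x))+th(s(1,x))-c\lambda t(1-t)\|x-T(x)\|^2.
\]
This non-differential route is slightly preferable: it requires no smoothness of $h$ or $U$ beyond convexity, no differentiability of $t\mapsto\det DT_t(x)$, and no exchange of $\partial_t^2$ with the integral---so the ``main technical obstacle'' you flag simply evaporates. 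It also makes the bookkeeping with the paper's $(k,1)$-convexity convention transparent, since one lands directly on the $t(1-t)$ inequality. For Part~$(2)$ your observation that the geodesic from $\mu^*$ is unique (because $\mu^*$ is absolutely continuous) is correct; equivalently, the geodesic built in Part~$(1)$ depends only on $(\mu_0,\mu_1)$ and not on $\nu$, so Theorem~\ref{thm:fvi} applies without further work.
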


Note that the requirements made on $f$ in Theorem \ref{thm:dfA3} are compatible with $f(x)=x\log x$ for $c=1$.
However, these requirements exclude examples such as $f(x)=(x-1)^2$.
At the price of a smaller exponent $\beta=1/2$, the next result shows that \ref{A3} holds for any choice of a strongly convex $f$. 

\begin{thm}
\label{thm:dflinear}
Let $f:\R_+\to\R$ be $k$-convex.
Then the following holds. 
\begin{itemize}
    \item[$(1)$] Let $\nu,\mu_0,\mu_1\in\mathcal P_2(E)$ be such that $\mu_0\ll \nu$ and $\mu_1\ll\nu$.
Suppose in addition that 
\[m_4(\nu):=\int \|x\|^4\,{\rm d}\nu(x)<+\infty.\]
Then, $D_f(.,\nu)$, is $(\frac{k}{4m_4(\nu)},\frac{1}{2})$-convex along the linear interpolation $\ell_t=(1-t)\mu_0+t\mu_1$.
\item[$(2)$] Let $m>0$.
Suppose that $M$ is a linearly convex subset of $\mathcal P_2(E)$ and suppose $P\in \mathcal P(M)$ is supported on measures $\nu\in M$ such that $m_4(\nu)\le m$.
Then, for any minimizer 
    \[\mu^*\in\underset{\mu\in M}{\arg\min}\int_M D_{f}(\mu,\nu)\,\mathrm{d}P(\nu),\]
    and any $\mu\in M$,
    \[W_2(\mu,\mu^*)^2\le2\left(\frac{m}{k}\int_M(D_f(\mu,\nu)-D_f(\mu^*,\nu))\,\mathrm{d}P(\nu)\right)^{1/2},\]
    and thus \ref{A3} holds with $K_3=2(m/k)^{1/2}$ and $\beta=1/2$.
\end{itemize}
\end{thm}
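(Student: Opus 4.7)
The plan is to prove part (1) first, since part (2) then follows by integrating in $\nu$ and invoking Theorem \ref{thm:fvi}. For part (1), fix $\nu$, $\mu_0$, $\mu_1$ as in the statement and let $g_0,g_1$ denote the densities of $\mu_0,\mu_1$ with respect to $\nu$, so that the linear interpolant $\ell_t$ has density $g_t=(1-t)g_0+tg_1$ with respect to $\nu$. The $k$-convexity of $f:\R_+\to\R$ means that $t\mapsto f((1-t)a+tb)-k(a-b)^2t^2$ is convex in $t$ for any $a,b\ge 0$; applying this pointwise with $a=g_0(x)$, $b=g_1(x)$ and integrating against $\nu$ shows that
\[
t\mapsto D_f(\ell_t,\nu)-kt^2\int(g_0-g_1)^2\,{\rm d}\nu
\]
is convex on $[0,1]$. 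The target $(k/(4m_4(\nu)),1/2)$-convexity along $\ell$ will then follow once I prove the key estimate
\[
W_2^4(\mu_0,\mu_1)\le 4m_4(\nu)\int(g_0-g_1)^2\,{\rm d}\nu,
\]
because subtracting the smaller quantity $\frac{k}{4m_4(\nu)}W_2^4(\mu_0,\mu_1)t^2$ in place of $kt^2\int(g_0-g_1)^2\,{\rm d}\nu$ adds a nonnegative multiple of $t^2$, which preserves convexity.

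The estimate above will come from an explicit (suboptimal) coupling of $\mu_0$ and $\mu_1$. Write the Jordan decomposition $\mu_0-\mu_1=\rho-\sigma$ with $\rho=(\mu_0-\mu_1)_+$ and $\sigma=(\mu_1-\mu_0)_+$, both of mass $\alpha=\tfrac{1}{2}\|g_0-g_1\|_{L^1(\nu)}$, and decompose $\mu_0=(\mu_0\wedge\mu_1)+\rho$ and $\mu_1=(\mu_0\wedge\mu_1)+\sigma$. I couple $\mu_0\wedge\mu_1$ to itself by the identity, and couple $\rho$ to $\sigma$ via the rescaled product $\pi_2=\alpha^{-1}\rho\otimes\sigma$; the identity part contributes nothing to the quadratic cost, while
\[
\int\|x-y\|^2\,{\rm d}\pi_2(x,y)\le \frac{2}{\alpha}\int\!\!\int(\|x\|^2+\|y\|^2)\,{\rm d}\rho(x)\,{\rm d}\sigma(y)=2\int\|x\|^2\,{\rm d}|\mu_0-\mu_1|(x).
\]
Hence $W_2^2(\mu_0,\mu_1)\le 2\int\|x\|^2|g_0-g_1|\,{\rm d}\nu$, and Cauchy--Schwarz in $L^2(\nu)$ gives $\int\|x\|^2|g_0-g_1|\,{\rm d}\nu\le m_4(\nu)^{1/2}\|g_0-g_1\|_{L^2(\nu)}$; squaring closes part (1).

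For part (2), linear convexity of $M$ ensures $\ell_t\in M$ for all $t\in[0,1]$, so $\ell$ is an admissible path in $M$ from $\mu^*$ to $\mu$. Since $P$-almost every $\nu$ satisfies $m_4(\nu)\le m$, part (1) gives that $D_f(\cdot,\nu)$ is $(k/(4m_4(\nu)),1/2)$-convex along $\ell$, hence a fortiori $(k/(4m),1/2)$-convex along $\ell$. This property is stable under averaging, so $\mu\mapsto \int_M D_f(\mu,\nu)\,{\rm d}P(\nu)$ is itself $(k/(4m),1/2)$-convex along $\ell$, and Theorem \ref{thm:fvi} produces
\[
W_2^2(\mu,\mu^*)\le\left(\frac{4m}{k}\int_M(D_f(\mu,\nu)-D_f(\mu^*,\nu))\,{\rm d}P(\nu)\right)^{1/2},
\]
which is \ref{A3} with $K_3=2(m/k)^{1/2}$ and $\beta=1/2$. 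The main obstacle is the Wasserstein lower bound of part (1); once that quartic control of $W_2$ by the $L^2(\nu)$ norm of $g_0-g_1$ is in hand, the remaining steps are assembly.
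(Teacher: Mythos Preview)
Your proof is correct and follows essentially the same route as the paper: use $k$-convexity of $f$ pointwise to get the $\int|g_0-g_1|^2\,{\rm d}\nu$ term, then convert this into a $W_2^4$ bound via $W_2^2(\mu_0,\mu_1)\le 2\int\|x\|^2|g_0-g_1|\,{\rm d}\nu$ and Cauchy--Schwarz, and finally apply Theorem~\ref{thm:fvi}. The only difference is that the paper quotes the inequality $W_2^2(\mu_0,\mu_1)\le 2\int\|x\|^2\,|g_0-g_1|\,{\rm d}\nu$ from Theorem~6.15 in \cite{villani2008optimal}, whereas you supply a self-contained proof via the explicit coupling (diagonal on $\mu_0\wedge\mu_1$, product on the remainder), which is a nice touch.
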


We end this paragraph with an important remark.
Using Theorem \ref{thm:fvi}, and as in Theorem \ref{thm:dfA3}, condition \ref{A3} can be deduced from $k$-convexity of the $f$-divergence.
Strong convexity of the $f$-divergence was used by Sturm, Lott and Villani (see \cite{Sturm06I,LottVillani2009} and references therein) to define the celebrated synthetic notion of Ricci curvature bounds.
In particular, for an underlying Riemannian $D$-dimensional manifold $E$ equipped with a measure with density $e^{-V}$ with respect to the Riemannian measure, $k$-convexity of such $f$-divergence is equivalent to 
\[
\Ric+\Hess(V)\ge k
\]
where $\Ric$ stands for the Ricci curvature tensor and $\Hess(V)$ is the Hessian of $V$, given $s\mapsto s^Df(s^{-D})$ is geodesically convex.

\subsection{Interaction energy}
\label{subsec:interenergy}
Interaction energy of a measure $\mu$, for a given function $g$ on $M\times M$ called interaction potential, is usually defined as the integral of $g$ w.r.t. the product measure $\mu\otimes\mu$.
Here, we keep the same terminology for a larger class of functionals on the Wasserstein space. 
Let $(E,\rho)$ be a separable, complete, locally compact and geodesic space.
Let $M=\mathcal P_2(E)$ be equipped with the Wasserstein metric $W_2$.
Consider functional $I_g:M\times M\to \R_+$ defined by 
\[I_g(\mu,\nu)=\int_{E\times E} g(x,y)\,\mathrm{d}\mu(x)\mathrm{d}\nu(y),\]
for a measurable $g:E\times E\to\R_+$.
Note first that \ref{A1} is clearly satisfied provided $g$ is upper bounded or $g$ is continuous and $E$ is compact.
The next two results present sufficient conditions for \ref{A2} and \ref{A3} to hold.
\begin{thm}
\label{thm:interactionA2}
Fix $L>0$ and suppose that $g$ is $L$-Lipschitz in the first variable when the second is fixed.
Then, for all $\mu,\mu',\nu\in\mathcal P_2(E)$,
\[
|I_g(\mu,\nu)-I_g(\mu',\nu)|\le L W_2(\mu,\mu'),
\]
and \ref{A2} holds with $K_2=L$ and $\alpha=1$.
\end{thm}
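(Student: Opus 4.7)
The plan is to prove the Lipschitz bound by reducing the problem to a standard Kantorovich-Rubinstein style estimate for the marginal $\mu$, then passing from $W_1$ to $W_2$ via Cauchy-Schwarz. I view the second integral as a ``fixed weight'' and absorb it into a single function of the first variable.

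First I would define, for fixed $\nu\in\mathcal P_2(E)$, the function
\[
f_\nu:E\to\R,\qquad f_\nu(x):=\int_E g(x,y)\,\mathrm{d}\nu(y),
\]
and observe that by the $L$-Lipschitz assumption on $g(\cdot,y)$ and Fubini,
\[
|f_\nu(x)-f_\nu(x')|\le\int_E|g(x,y)-g(x',y)|\,\mathrm{d}\nu(y)\le L\,\rho(x,x'),
\]
so $f_\nu$ is itself $L$-Lipschitz on $(E,\rho)$. By the definition of $I_g$ and a second application of Fubini,
\[
I_g(\mu,\nu)-I_g(\mu',\nu)=\int_E f_\nu(x)\,\mathrm{d}\mu(x)-\int_E f_\nu(x)\,\mathrm{d}\mu'(x).
\]

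Next I would pick an optimal transport plan $\pi\in\Pi(\mu,\mu')$ for the quadratic cost $\rho^2$, so that $\int_{E\times E}\rho(x,x')^2\,\mathrm{d}\pi(x,x')=W_2(\mu,\mu')^2$. Using $\pi$ to rewrite the difference gives
\[
I_g(\mu,\nu)-I_g(\mu',\nu)=\int_{E\times E}\bigl(f_\nu(x)-f_\nu(x')\bigr)\,\mathrm{d}\pi(x,x'),
\]
and the $L$-Lipschitz property together with Cauchy-Schwarz applied to $\pi$ yields
\[
|I_g(\mu,\nu)-I_g(\mu',\nu)|\le L\int_{E\times E}\rho(x,x')\,\mathrm{d}\pi(x,x')\le L\Bigl(\int_{E\times E}\rho(x,x')^2\,\mathrm{d}\pi(x,x')\Bigr)^{1/2}=LW_2(\mu,\mu').
\]
This is exactly \ref{A2} with $K_2=L$ and $\alpha=1$.

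There is no real obstacle: the only subtle points are (i) checking that $f_\nu$ is finite, which is immediate once one notes $f_\nu(x)\le f_\nu(x_0)+L\int\rho(x_0,y)\,\mathrm{d}\nu(y)$ and $\nu\in\mathcal P_2(E)\subset\mathcal P_1(E)$ (so the integrand is $\nu$-integrable), and (ii) the existence of an optimal $W_2$-coupling, which holds under the standing hypotheses on $(E,\rho)$ by standard results in optimal transport. If one prefers to avoid the existence claim, the argument goes through verbatim by taking an infimum over couplings on the right-hand side.
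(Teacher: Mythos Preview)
Your argument is correct and follows essentially the same route as the paper's proof. The paper fixes $y$, applies the Kantorovich--Rubinstein duality to the $L$-Lipschitz map $x\mapsto g(x,y)$ to get a bound by $LW_1(\mu,\mu')\le LW_2(\mu,\mu')$, and then integrates in $\nu$; you reverse the order of integration, show $f_\nu$ is $L$-Lipschitz, and perform the coupling estimate directly (which is exactly the Kantorovich--Rubinstein bound followed by $W_1\le W_2$). One small slip: in your finiteness remark (i) the inequality should read $f_\nu(x)\le f_\nu(x_0)+L\,\rho(x,x_0)$, not $f_\nu(x_0)+L\int\rho(x_0,y)\,\mathrm{d}\nu(y)$; this does not affect the main argument.
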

\begin{thm}
\label{thm:interactionA3}
Fix $k\ge0$ and $\beta\in(0,1]$.
Suppose that $g$ is $(k,\beta)$-geodesically convex in the first variable when the second is fixed.
Then, the following holds.
\begin{itemize}
    \item[$(1)$] For any fixed $\nu\in\mathcal P_2(E)$, the functional $I_g(.,\nu)$ is $(k,\beta)$-geodesically convex on $\mathcal P_2(E)$.
    \item[$(2)$] Let $P\in\mathcal P(\mathcal P_2(E))$.
Then for any minimizer 
    \[
    \mu^*\in\underset{\mu\in\mathcal P_2(E)}{\arg\min}\int I_{g}(\mu,\nu)\,\mathrm{d}P(\nu),
    \]
    and any $\mu\in\mathcal P_2(E)$,
    \[
    W_2(\mu,\mu^*)^2\le\left(\frac{1}{k}\int(I_g(\mu,\nu)-I_g(\mu^*,\nu))\,\mathrm{d}P(\nu)\right)^{\beta},
    \]
    and \ref{A3} holds with $K_3=1/k^{\beta}$.
    \end{itemize}
\end{thm}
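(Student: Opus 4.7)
The strategy is to prove part $(1)$ by taking a displacement interpolation in $\mathcal P_2(E)$ along which pointwise $(k,\beta)$-convexity of $g(\cdot,y)$ lifts, via Fubini and Jensen's inequality, to $(k,\beta)$-convexity of $I_g(\cdot,\nu)$. Part $(2)$ then follows from Theorem \ref{thm:fvi}, since the geodesic produced in part $(1)$ will not depend on $\nu$.

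For part $(1)$, I would fix $\mu_0,\mu_1\in\mathcal P_2(E)$ and let $\pi$ be an optimal transport plan for $W_2(\mu_0,\mu_1)$. Choose a measurable family $(x_0,x_1)\mapsto \gamma^{x_0,x_1}$ of geodesics in $E$ with $\gamma^{x_0,x_1}(0)=x_0$ and $\gamma^{x_0,x_1}(1)=x_1$, such that $g(\cdot,y)$ is $(k,\beta)$-convex along $\gamma^{x_0,x_1}$ for $\nu$-a.e.\ $y$; then set $\mu_t=(e_t)_\#\pi$ with $e_t(x_0,x_1)=\gamma^{x_0,x_1}(t)$, which is a geodesic in $\mathcal P_2(E)$. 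Fubini gives
\[
I_g(\mu_t,\nu)=\int_{E\times E}\int_E g(\gamma^{x_0,x_1}(t),y)\,\mathrm{d}\nu(y)\,\mathrm{d}\pi(x_0,x_1),
\]
and for each triple $(x_0,x_1,y)$ the map $t\mapsto g(\gamma^{x_0,x_1}(t),y)-k\rho(x_0,x_1)^{2/\beta}t^2$ is convex on $[0,1]$ by assumption. Integrating against $\pi\otimes\nu$ preserves convexity, so $t\mapsto I_g(\mu_t,\nu)-k\bigl(\int\rho^{2/\beta}\,\mathrm{d}\pi\bigr)t^2$ is convex.

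To convert the factor $\int\rho^{2/\beta}\,\mathrm{d}\pi$ into $W_2(\mu_0,\mu_1)^{2/\beta}$, I would apply Jensen's inequality to the convex map $u\mapsto u^{1/\beta}$ (convex since $1/\beta\geq 1$), obtaining
\[
W_2(\mu_0,\mu_1)^{2/\beta}=\Bigl(\int\rho^2\,\mathrm{d}\pi\Bigr)^{1/\beta}\leq \int\rho^{2/\beta}\,\mathrm{d}\pi.
\]
Adding the nonnegative multiple $k\bigl(\int\rho^{2/\beta}\,\mathrm{d}\pi-W_2(\mu_0,\mu_1)^{2/\beta}\bigr)t^2$ of $t^2$ to the convex function above preserves convexity, which means $t\mapsto I_g(\mu_t,\nu)-kW_2(\mu_0,\mu_1)^{2/\beta}t^2$ is convex on $[0,1]$; this is exactly $(k,\beta)$-convexity of $I_g(\cdot,\nu)$ along $(\mu_t)$.

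For part $(2)$, the crucial feature of the geodesic $(\mu_t)$ above is that its construction does not involve $\nu$. Applying one further Fubini shows that $\mu\mapsto\int I_g(\mu,\nu)\,\mathrm{d}P(\nu)$ is $(k,\beta)$-convex along $(\mu_t)$ for $\mu_0=\mu^*$ and any $\mu_1=\mu\in\mathcal P_2(E)$, and Theorem \ref{thm:fvi} delivers the variance inequality with $K_3=1/k^{\beta}$. The main technical point is the measurable selection of geodesics $\gamma^{x_0,x_1}$ in $E$ that simultaneously realize $(k,\beta)$-convexity of $g(\cdot,y)$ for $\nu$-a.e.\ $y$; this difficulty evaporates when geodesics in $E$ are essentially unique (for instance when $E$ is a Hilbert space or a Riemannian manifold with almost everywhere unique minimizing geodesics), which covers the applications envisaged in the paper.
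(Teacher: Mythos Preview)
Your proposal is correct and follows essentially the same route as the paper: represent a Wasserstein geodesic $(\mu_t)$ as a displacement interpolation, apply the pointwise $(k,\beta)$-convexity of $g(\cdot,y)$ along the underlying geodesics in $E$, integrate, and use Jensen for $u\mapsto u^{1/\beta}$ to pass from $\int\rho^{2/\beta}\,\mathrm d\pi$ to $W_2(\mu_0,\mu_1)^{2/\beta}$; part $(2)$ is then Theorem~\ref{thm:fvi}.

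Two minor differences are worth noting. First, the paper parametrises the displacement interpolation by a probability measure $\Pi$ on the space $\mathcal G(E)$ of geodesics (via Corollary~7.22 in \cite{villani2008optimal}), writing $\mu_t=(\mathrm{eval}_t)_\#\Pi$; this packaging sidesteps the measurable-selection issue you flag, since the existence of $\Pi$ is already guaranteed abstractly. Second, the paper only records the endpoint inequality
\[
I_g(\mu_t,\nu)\le (1-t)I_g(\mu_0,\nu)+tI_g(\mu_1,\nu)-kt(1-t)W_2(\mu_0,\mu_1)^{2/\beta},
\]
whereas you argue directly that $t\mapsto I_g(\mu_t,\nu)-k\bigl(\int\rho^{2/\beta}\,\mathrm d\pi\bigr)t^2$ is convex (as an integral of convex functions) and then add the nonnegative multiple of $t^2$ coming from Jensen. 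Your formulation is a touch cleaner for establishing $(k,\beta)$-convexity in the precise sense of the paper's definition when $\beta<1$, though for the purposes of part $(2)$ both arguments are equivalent since the proof of Theorem~\ref{thm:fvi} only uses the endpoint inequality. Your caveat about needing a $y$-independent geodesic in $E$ is accurate and applies equally to the paper's computation; the measure-on-geodesics formulation does not remove it.
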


\subsection{Regularised Wasserstein distance}
\label{subsec:approxbary}

In this final paragraph, we present a few functionals of interest whose study, in the light of the previous results, could provide interesting research perspectives.
In certain cases, like for the Wasserstein space, the distance is difficult to compute.
In the field of computational optimal transport, a lot of work has been devoted to construct computationally friendly approximations of the Wasserstein distance.
Such approximations include the sliced Wasserstein distance, penalised Wasserstein distances or regularised Wasserstein distances (see \cite{cp2018} for more details).
In some of these approximations, one hope is that one may enforce some form of convexity allowing for condition \ref{A3} to be valid in a wide setting.
The Sinkhorn divergence, defined below, is an approximation of the Wasserstein distance that has been widely used in practice due to its attractive computationally properties.
Denote by $D$ the relative entropy, i.e. $D=D_f$ for $f(x)=x\log x$ where $D_f$ is as in \eqref{eq:relativef}, and $\Gamma(\mu,\nu)$ the set of probability measures $\pi$ on $\R^d\times\R^d$ with marginals $\mu$ an $\nu$ respectively.
Let $\mu,\nu\in\mathcal{P}_2(\R^d)$, and $\gamma>0$.
The entropy-regularised Wasserstein distance or Sinkhorn divergence between $\mu$ and $\nu$ is defined by
\begin{equation}
\label{eq:sinkh}
W_\gamma(\mu,\nu)^2:=\inf_{\pi\in\Gamma(\mu,\nu)}\int d(x,y)^2\,{\rm d}\pi(x,y) + \gamma D(\pi,\mu\otimes\nu).
\end{equation}
Identifying scenarios in which $W_\gamma$ satisfies conditions \ref{A1}-\ref{A3} remains for us an open question. 
Note however that, using Lagrange multipliers, the Sinkhorn divergence can be alternatively expressed as
\[
\inf\left\{\int d(x,y)^2\,{\rm d}\pi(x,y):\pi\in\Gamma(\mu,\nu),D(\pi,\mu\otimes\nu)\le\delta\right\},
\]
for some $\delta\ge 0$.
Letting $\delta=0$, the above expression reduces to the interaction energy $I_g$ studied in paragraph \ref{subsec:interenergy} for $g(x,y)=d(x,y)^2$.
This observation suggests that, for large values of $\gamma$, functional $W_\gamma(\mu,\nu)$ should satisfy \ref{A1}-\ref{A3} under reasonable conditions. 
An interesting alternative notion of regularised Wasserstein distance, via factored couplings, was introduced in \cite{forrow2019statistical}.
In order to define this notion, we introduce some notation.
For a partition $\mathcal C=(C_1,\dots,C_n)$, and a measure $\mu$, denote by $\mu_i^\mathcal{C}$ the measure $\mu$ restricted to $C_i$.
For $n\in \N$, denote by $\Gamma_n(\mu,\nu)$ the set of measures $\gamma$ such that there exists two partition $\mathcal{C}^0$ and $\mathcal{C}^1$ such that $\gamma=\sum_{i=1}^n\lambda_i\mu_i^{\mathcal C^0}\otimes\nu_i^{\mathcal C^1}$, with $\lambda_i:=\mu(C_i^0)=\nu(C_i^1)$.
For two measures $\mu$ and $\nu$, the regularised Wasserstein distance between $\mu$ and $\nu$ is defined by
\[
RW_n^2(\mu,\nu):=\inf_{\gamma\in\Gamma_n(\mu,\nu)} \int d^2 {\rm d}\gamma=\inf\left\{\sum_{i=1}^n \lambda_i \int d^2{\rm d}(\mu_i\otimes\nu_i): \sum_{i=1}^n\lambda_i\mu_i\otimes\nu_i=\gamma\in\Gamma_n(\mu,\nu)\right\}.
\]
While the trivial case $n=1$ reduces to the interaction energy studied above, it remains unclear to us in which setting our assumptions apply to this functional in general.

\section{Proofs}
\label{sec:proofs}
\subsection{Proof of Theorem \ref{thm:upperbound1}}

The proof is based on three auxiliary lemmas.
The first lemma provides an upper bound on the largest fixed point of a random nonnegative function.
The proof follows from a combination of arguments presented in Theorem 4.1, Corollary 4.1 and Theorem 4.3 in \cite{Kolt11}.
\begin{lem}
\label{lem:fixedpoint}
Let $\{\phi(\delta):\delta\ge 0\}$ be non-negative random variables (indexed by all deterministic $\delta\ge 0$) such that, almost surely, $\phi(\delta)\le \phi(\delta')$ if $\delta\le\delta'$.
Let $\{b(\delta,t): \delta\ge 0, t\ge0\}$, be (deterministic) real numbers such that $b(\delta,t)\le b(\delta,t')$, as soon as $t\le t'$, and such that
\[\prob(\phi(\delta)\ge b(\delta,t))\le e^{-t}.\]
Finally, let $\hat \delta$ be a nonnegative random variable, a priori upper bounded by a constant $\bar\delta>0$, and such that, almost surely, 
\[\hat \delta\le \phi(\hat \delta).\]
Then defining, for all $t\ge0$,
\[b(t):=\inf\left\{\alpha>0:\sup_{\delta\ge \alpha}\frac{b\left(\delta,\tfrac{t\delta}{\alpha}\right)}{\delta}\le 1\right\},\]
we obtain, for all $t\ge0$, 
\[\prob(\hat \delta\ge b(t))\le 2e^{-t}.\]
\end{lem}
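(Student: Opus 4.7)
The plan is a dyadic peeling of the event $\{\hat\delta\ge b(t)\}$, along the lines of Theorem 4.3 in \cite{Kolt11} cited in the statement. Fix $\alpha>b(t)$ so that, by definition of $b(t)$ and monotonicity of $b(\delta,\cdot)$,
\[
b(\delta,t\delta/\alpha)\le \delta \qquad \text{for every }\delta\ge\alpha,
\]
and hence the tail hypothesis yields $\prob(\phi(\delta)\ge\delta)\le \exp(-t\delta/\alpha)$ for all such $\delta$. I will prove $\prob(\hat\delta\ge\alpha)\le 2e^{-t}$ and then let $\alpha\downarrow b(t)$ to conclude.

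Next, introduce dyadic shells $\delta_j=2^j\alpha$ for $j\ge 0$ and set $A_j=\{\delta_j\le\hat\delta<\delta_{j+1}\}$. Since $\hat\delta\le\bar\delta$, only finitely many shells are nonempty and they cover $\{\hat\delta\ge\alpha\}$, so no measurability issue arises. On $A_j$, combining the a.s.\ inequality $\hat\delta\le\phi(\hat\delta)$ with the almost sure monotonicity of $\phi$ gives
\[
\delta_j\le \hat\delta\le \phi(\hat\delta)\le \phi(\delta_{j+1}),
\]
so that $A_j\subseteq\{\phi(\delta_{j+1})\ge\delta_j\}$, which is the key inclusion that converts a statement about the random $\hat\delta$ into statements about $\phi$ at deterministic levels.

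To estimate $\prob(\phi(\delta_{j+1})\ge\delta_j)$, choose $s_j$ with $b(\delta_{j+1},s_j)\le\delta_j$; the tail hypothesis then yields $\prob(\phi(\delta_{j+1})\ge\delta_j)\le e^{-s_j}$. The defining inequality for $b(t)$, together with the monotonicity of $b$ in $t$ and the scale-shift property (if the sup condition holds at $\alpha$, it also holds at every $\alpha'\ge\alpha$), guarantees that $s_j$ may be taken proportional to $t\,2^j$ for an absolute positive constant. A union bound and the geometric summation
\[
\prob(\hat\delta\ge\alpha)\le \sum_{j\ge 0}e^{-s_j}\le \sum_{j\ge 0}e^{-c\,t\,2^j}\le 2e^{-t}
\]
(the last estimate being routine for $t$ bounded away from $0$ and the conclusion being trivial otherwise) closes the argument.

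The main obstacle is exactly the matching of scales in the last step: the definition of $b(t)$ only delivers $b(\delta,\cdot)/\delta\le 1$, while the peeling deduction calls for $b(\delta_{j+1},s_j)\le\delta_j=\delta_{j+1}/2$. This factor-of-two gap has to be absorbed, either by choosing $\alpha$ strictly above $b(t)$ so the sup condition holds with strict inequality (passing to the limit at the end), or by refining the dyadic splitting with a ratio slightly larger than $1$ and compensating via the rapid decay in $2^j t$; the slack produced by this resolution is precisely what forces the multiplicative constant $2$ in $2e^{-t}$.
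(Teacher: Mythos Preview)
The paper does not actually supply a proof of this lemma; it only cites Theorems 4.1, 4.3 and Corollary 4.1 of \cite{Kolt11}. So the relevant question is whether your argument is complete.

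Your peeling strategy and the inclusion $A_j\subset\{\phi(\delta_{j+1})\ge\delta_j\}$ are exactly right, and you correctly isolate the crucial difficulty: the hypotheses only give $b(\delta_{j+1},t\delta_{j+1}/\alpha)\le\delta_{j+1}$, whereas you need some $s_j$ with $b(\delta_{j+1},s_j)\le\delta_j$. Your two proposed fixes, however, do not close this gap under the stated assumptions. Taking $\alpha$ strictly above $b(t)$ yields at best a strict inequality $b(\delta,t\delta/\alpha)<\delta$, not $b(\delta,\cdot)\le\delta/q$ for any fixed $q>1$; and refining the grid ratio does not help because the needed inequality $b(\delta_{j+1},s_j)\le\delta_{j+1}/q$ is still strictly stronger than what is available, for every $q>1$.

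Here is a concrete obstruction. Let $Y\sim\mathrm{Exp}(1)$, set $\phi(\delta)=Y$ for $\delta\le 5$ and $\phi(\delta)=Y+4$ for $\delta>5$ (monotone in $\delta$), and take $b(\delta,t)=t$ for $\delta\le5$, $b(\delta,t)=4+t$ for $\delta>5$. One checks the tail bound $\prob(\phi(\delta)\ge b(\delta,t))\le e^{-t}$ and computes $b(t)=5t$ for $t\le1$. With $t=1/2$ and $\alpha=2.6>b(t)=2.5$, your dyadic grid gives $\delta_0=2.6$, $\delta_1=5.2$, and $A_0\subset\{\phi(5.2)\ge 2.6\}$; but $b(5.2,s)=4+s\ge4>2.6=\delta_0$ for every $s\ge0$, so no admissible $s_0$ exists and your bound on $\prob(A_0)$ degenerates to $1$. (The lemma's conclusion nevertheless holds in this example, as a direct calculation shows.)

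In Koltchinskii's setting, and in the paper's application, $b(\delta,t)$ has the additional structure $b(\delta,t)=\psi(\delta)+\text{terms in }t$ with $\psi(\delta)/\delta$ non-increasing; this is what allows one to pass from control at $\delta_{j+1}$ to control at $\delta_j$. Either invoke that structure explicitly (it is present in the $b_n(\delta,t)$ actually used in the proof of Theorem~\ref{thm:upperbound1}), or rework the peeling so that the inclusion produced is $\{\phi(\delta)\ge\delta\}$ at a single scale rather than across two. As it stands, the last paragraph is a promissory note rather than a proof.
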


The second lemma is due to \cite{Bous02} and improves upon the work of \cite{Tala96} by providing explicit constants.
Given a family $\mathcal F$ of functions $f:M\to \R$, denote 
\[| P-P_{n}|_{\mathcal F}:=\sup_{f\in\mathcal F}(P-P_n)f\quad\mbox{and}\quad \sigma^{2}_{\mathcal F}:=\sup_{f\in\mathcal F} P(f-Pf)^2,\]
where $M$ and $P$ are as defined in paragraph \ref{subsec:setup} and $P_n=\frac{1}{n}\sum_{i=1}^n\delta_{Y_i}$.
\begin{lem}
\label{bousquet}
Suppose that all functions in $\mathcal F$ are $[a, b]$-valued, for some $a<b$.
Then, for all $n\ge 1$ and all $t>0$, 
\[|P-P_{n}|_{\mathcal F}\le \esp |P-P_{n}|_{\mathcal F}+\sqrt{\frac{2t}{n}\left(\sigma^{2}_{\mathcal F}+2(b-a)\,\esp |P-P_{n}|_{\mathcal F}\right)}+\frac{(b-a)t}{3n},\]
with probability larger than $1-e^{-t}$.
\end{lem}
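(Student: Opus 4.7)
The plan is to prove Lemma \ref{bousquet} via the entropy method, following the strategy of Bousquet, which sharpens Ledoux's approach to Talagrand's inequality for suprema of empirical processes. Set $Z = n |P - P_n|_{\mathcal F}$, viewed as a function of the independent variables $Y_1,\dots,Y_n$. The goal is to control the log-moment generating function $\psi(\lambda) = \log \esp \exp(\lambda(Z - \esp Z))$ for $\lambda$ in a suitable range, and then to invert via a Chernoff argument.

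First, I would invoke the tensorization inequality for entropy: for any nonnegative measurable function $g$ of $(Y_1,\dots,Y_n)$, one has $\mathrm{Ent}(g) \le \sum_{i=1}^n \esp[\mathrm{Ent}_i(g)]$, where $\mathrm{Ent}_i$ is the conditional entropy given all coordinates except the $i$-th. Applying this with $g = e^{\lambda Z}$, together with Han's variational formula for conditional entropy, reduces matters to controlling the local fluctuations $Z - Z_i'$, where $Z_i'$ is obtained by replacing $Y_i$ by an independent copy. Using the supremum structure of $Z$, one produces a measurable selector $\hat f \in \mathcal F$ achieving the supremum (or an $\e$-approximation) and verifies the self-bounding inequalities
\begin{equation*}
Z - Z_i' \le (b-a), \qquad \sum_{i=1}^n \esp_i (Z - Z_i')_{+}^2 \le n \sigma^2_{\mathcal F} + 2(b-a) \esp Z,
\end{equation*}
where $\esp_i$ denotes expectation with respect to the $i$-th coordinate only. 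This is where the specific variance proxy $v := n\sigma_{\mathcal F}^2 + 2(b-a)\esp Z$ arises.

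Second, combining the two displays with the tensorization yields a one-sided functional inequality of the form $\lambda \psi'(\lambda) - \psi(\lambda) \le v \lambda^2 / (1 - (b-a)\lambda/3)$ for $0 \le \lambda < 3/(b-a)$, valid under Bousquet's refinement of Ledoux's modified log-Sobolev inequality. Integrating this differential inequality from $0$ yields the Bernstein-type bound $\psi(\lambda) \le \frac{v \lambda^2}{2(1 - (b-a)\lambda/3)}$ in the admissible range of $\lambda$.

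Third, I would apply Markov's inequality to $e^{\lambda Z}$ and optimize over $\lambda$: this is the standard conversion of a Bernstein-type MGF bound into a deviation inequality, yielding $\prob\bigl(Z - \esp Z \ge \sqrt{2 v t} + (b-a)t/3 \bigr) \le e^{-t}$. Dividing through by $n$ and identifying $v/n = \sigma^2_{\mathcal F} + 2(b-a) \esp |P - P_n|_{\mathcal F}$ recovers the statement of the lemma verbatim. The main obstacle is the second step: producing the sharp variance proxy (with the exact factor $2$ in front of $\esp |P-P_n|_{\mathcal F}$) and the constant $1/3$ in the Bernstein correction requires Bousquet's delicate analysis of the positive part $(Z - Z_i')_{+}$, rather than the cruder symmetric estimates used in the original Ledoux--Massart approach; without separating positive and negative fluctuations through the selector $\hat f$, one recovers only suboptimal numerical constants.
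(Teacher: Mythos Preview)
The paper does not give its own proof of this lemma: immediately after the statement it writes ``For background on empirical processes, including the proof of Lemma \ref{bousquet}, we refer the reader to \cite{GineNick15}.'' The lemma is attributed to \cite{Bous02} and quoted as a black box. So there is no proof in the paper to compare against.

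Your outline is a faithful high-level sketch of Bousquet's original argument via the entropy method (tensorization of entropy, the self-bounding structure of the supremum through a measurable near-maximiser, the resulting differential inequality for the log-MGF, and the Chernoff inversion), and the constants you identify ($2$ in the variance proxy, $1/3$ in the Bernstein correction) are the ones that distinguish Bousquet's refinement from the earlier Ledoux--Massart version. One small caveat: the inequality you display as $\sum_{i=1}^n \esp_i (Z - Z_i')_{+}^2 \le n \sigma^2_{\mathcal F} + 2(b-a) \esp Z$ is not quite how the argument runs; the actual bound controls $\sum_i \esp_i (Z-Z_i')_+^2$ by a quantity involving the \emph{random} variable $Z$ (not $\esp Z$) on the right, and it is only after feeding this into the modified log-Sobolev inequality and integrating that $\esp Z$ appears in the final variance proxy $v$. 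This is a presentational slip rather than a genuine gap, but if you were to write out the proof in full it is exactly the place where care is needed.
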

For background on empirical processes, including the proof of Lemma \ref{bousquet}, we refer the reader to  \cite{GineNick15}.
Finally, the third result we need is the following generalized version of Dudley's entropy bound (see, for instance, Theorem 5.31 in \cite{vanHa16}).

\begin{lem}
\label{lem:chain}
Let $(X_t)_{t\in E}$ be a real valued process indexed by a pseudo metric space $(E,d)$.
Suppose that the three following conditions hold.\vspace{0.2cm}\\
$(1)$ \emph{(Separability)} There exists a countable subset $E'\subset E$ such that, for any $t\in E$,
\[X_{t}=\lim_{t'\to t, t'\in E'}X_{t'},\quad \mbox{a.s.}\] 
$(2)$ \emph{(Subgaussianity)} For all $s,t\in E$, $X_{s}-X_{t}$ is subgaussian in the sense that
\[\forall \theta\in\R,\quad \log\esp e^{\theta(X_s-X_t)}\le \frac{\theta^2d(s,t)^2}{2}.\]
$(3)$ \emph{(Lipschitz property)} There exists a random variable $L$ such that, for all $s,t\in E$,
\[|X_s-X_t|\le Ld(s,t),\quad\mbox{a.s.}\]
Then, for any $S\subset E$ and any $\e\ge 0$, we have
$$\esp\sup_{t\in S} X_{t}\le 2\e\esp[L]+ 12\int_{\e}^{+\infty}\sqrt{\log N(S,d,u)}\,\emph{d}u.$$
\end{lem}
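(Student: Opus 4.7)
The plan is to prove Lemma \ref{lem:chain} by the classical dyadic chaining argument of Dudley, with the Lipschitz hypothesis (3) used solely to terminate the chain at scale $\varepsilon$ rather than letting it run to $0$. The centering convention $\esp X_{t_0}=0$ for some reference $t_0\in S$ is implicit in the statement; indeed, (2) forces $\esp(X_s-X_t)=0$ for all $s,t$, so the claim is really a bound on $\esp\sup_{t\in S}(X_t-X_{t_0})$. By the separability assumption (1) one may replace $S$ by a countable dense subset, so that all suprema in the sequel are well-defined. If $\int_\varepsilon^{+\infty}\sqrt{\log N(S,d,u)}\,\mathrm{d}u=+\infty$ there is nothing to prove, hence we may take $\Delta=\mathrm{diam}(S)<+\infty$.

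Fix the dyadic scales $\varepsilon_k=2^{-k}\Delta$ for $k\ge 0$, and choose the level $K$ so that $\varepsilon_K$ is the first dyadic scale at or below $\varepsilon$. For each $k=0,\dots,K$ let $T_k$ be a minimal $\varepsilon_k$-net of $S$ (with $T_0=\{t_0\}$ and $|T_k|=N(S,d,\varepsilon_k)$), and for $t\in S$ let $\pi_k(t)\in T_k$ be a nearest point. The telescoping identity
\[
X_t-X_{t_0}=\sum_{k=1}^{K}\bigl(X_{\pi_k(t)}-X_{\pi_{k-1}(t)}\bigr)+\bigl(X_t-X_{\pi_K(t)}\bigr)
\]
reduces the estimate to controlling each level and the residual. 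At level $k$, the pair $(\pi_k(t),\pi_{k-1}(t))$ takes at most $N(S,d,\varepsilon_k)^2$ distinct values, each with mutual distance $\le \varepsilon_k+\varepsilon_{k-1}=3\varepsilon_k$; by assumption (2) these increments are sub-Gaussian with proxy $3\varepsilon_k$, and the standard sub-Gaussian maximal inequality $\esp\max_{i\le N}Y_i\le \sigma\sqrt{2\log N}$ (an immediate consequence of the exponential Chebyshev inequality and a union bound) delivers
\[
\esp\sup_{t\in S}\bigl(X_{\pi_k(t)}-X_{\pi_{k-1}(t)}\bigr)\le 3\varepsilon_k\sqrt{2\log N(S,d,\varepsilon_k)^2}=6\varepsilon_k\sqrt{\log N(S,d,\varepsilon_k)}.
\]
For the residual, assumption (3) yields $|X_t-X_{\pi_K(t)}|\le L\,d(t,\pi_K(t))\le L\varepsilon_K$, which, once $K$ is aligned so that $\varepsilon_K\le 2\varepsilon$, contributes at most $2\varepsilon\,\esp[L]$ in expectation.

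Combining the level bounds gives
\[
\esp\sup_{t\in S}X_t\le 2\varepsilon\,\esp[L]+6\sum_{k=1}^{K}\varepsilon_k\sqrt{\log N(S,d,\varepsilon_k)}.
\]
Because $\varepsilon_k=2(\varepsilon_k-\varepsilon_{k+1})$ and $u\mapsto\sqrt{\log N(S,d,u)}$ is non-increasing,
\[
\varepsilon_k\sqrt{\log N(S,d,\varepsilon_k)}\le 2\int_{\varepsilon_{k+1}}^{\varepsilon_k}\sqrt{\log N(S,d,u)}\,\mathrm{d}u,
\]
so the entropy sum telescopes to at most $6\cdot 2=12$ times $\int_\varepsilon^{+\infty}\sqrt{\log N(S,d,u)}\,\mathrm{d}u$, producing the stated inequality. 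The substantive ingredients (sub-Gaussian maximal inequality, telescoping chain, integral comparison) are standard, and the only genuine difficulty is bookkeeping: one must calibrate the dyadic endpoints so that the Lipschitz tail comes out as exactly $2\varepsilon\,\esp[L]$ and the entropy sum as exactly $12$ times the integral, possibly at the cost of adjusting whether the chain stops at $T_K$ or $T_{K-1}$ and absorbing any small boundary piece into the Lipschitz residual.
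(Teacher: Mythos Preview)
The paper does not prove Lemma \ref{lem:chain}; it is stated as a known result with a reference to Theorem 5.31 in van Handel's lecture notes. Your chaining argument is the standard proof and matches what one finds in that reference, so in that sense your approach coincides with what the paper implicitly invokes.

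Your write-up is essentially correct, and you have identified the one genuine sticking point yourself: the boundary term. With your choice of $K$ (the first level with $\varepsilon_K\le\varepsilon$, or the last with $\varepsilon_K>\varepsilon$), the integral comparison produces $2\int_{\varepsilon_{K+1}}^{\varepsilon_1}\sqrt{\log N(S,d,u)}\,\mathrm{d}u$ with $\varepsilon_{K+1}<\varepsilon$, not $2\int_{\varepsilon}^{+\infty}$. The cleanest fix is to take $K$ as the largest index with $\varepsilon_K\ge\varepsilon$, so that $\varepsilon\le\varepsilon_K<2\varepsilon$ and $\varepsilon_{K+1}\ge\varepsilon/2$. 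The residual is then at most $L\varepsilon_K<2L\varepsilon$, giving exactly $2\varepsilon\,\esp[L]$. For the entropy sum, bound the terms $k=1,\dots,K-1$ by $2\int_{\varepsilon_K}^{\varepsilon_1}\le 2\int_{\varepsilon}^{+\infty}$ as you do, and handle the single remaining term $k=K$ separately via $\varepsilon_K\sqrt{\log N(S,d,\varepsilon_K)}\le 2(\varepsilon_K-\varepsilon)\sqrt{\log N(S,d,\varepsilon_K)}+2\varepsilon\cdot 0$ when $\varepsilon_K<2\varepsilon$; alternatively, simply observe that $\varepsilon_{K+1}\ge\varepsilon/2$ implies the overshoot interval $[\varepsilon_{K+1},\varepsilon]$ has length at most $\varepsilon/2$, and absorb that piece by using the cruder link bound $d(\pi_k(t),\pi_{k-1}(t))\le 3\varepsilon_k$ only for $k\le K-1$ and treating the last link together with the residual via the Lipschitz property. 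Either way the constants $2$ and $12$ are recoverable; your closing remark that this is ``bookkeeping'' is accurate, but in a written proof you should carry it out explicitly rather than defer it.
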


We are now in position to prove Theorem \ref{thm:upperbound1}.
 
\begin{proof}[Proof of Theorem \ref{thm:upperbound1}] 
$(1)$ For any $\delta\ge 0$, denote 
\[\ms(\delta):=\{x\in\ms: P(F(x,.)-F(x^*,.))\le \delta\},\]
and
\[\phi_n(\delta):=\sup\{(P-P_n)(F(x,.)-F(x^*,.)):x\in\ms(\delta)\}.\]
As a consequence of Assumption \ref{B1}, the set $\ms$ is separable.
Hence, the quantity $\phi_n(\delta)$ is measurable, as well as all suprema involved in the rest of the proof.
Define 
\[\delta_n:= P(F(x_n,.)-F(x^*,.)).\]
By definition of $x_n$, $P_n(F(x_n,.)-F(x^*,.))\le 0$ so that
\begin{equation}
\nonumber
\delta_n \le (P-P_n)(F(x_n,.)-F(x^*,.))\le \phi_n(\delta_n).
\end{equation}
As a result, in order to upper bound $\delta_n$ with high probability, it is enough to upper bound $\phi_n(\delta)$, for fixed $\delta\ge 0$, and apply Lemma \ref{lem:fixedpoint}.
Denoting
\[\sigma^2(\delta):=\sup\{P(F(x,.)-F(x^*,.))^2:x\in\ms(\delta)\},\]
 and observing that $-2K_1\le F(x,y)-F(x^*,y)\le 2K_1$, for all $x,y\in \ms$ due to \ref{A1}, it follows from Lemma \ref{bousquet} that inequality
\[\phi_n(\delta)\le \esp \phi_n(\delta)+\sqrt{\frac{2t}{n}\left(\sigma^2(\delta)+8K_1\,\esp \phi_n(\delta)\right)}+\frac{4K_1 t}{3n},\]
holds with probability at least $1-e^{-t}$.
Using basic inequalities $\sqrt{u+v}\le \sqrt{u}+\sqrt{v}$ and $2\sqrt{uv}\le u+v$ for positive numbers, we further deduce that 
\begin{equation}
\label{proof:ub:e1}
\phi_n(\delta)\le 2\esp \phi_n(\delta)+\sigma(\delta)\sqrt{\frac{2t}{n}}+\frac{16K_1 t}{3n},
\end{equation}
with probability at least $1-e^{-t}$.
Combining \ref{A2} and \ref{A3}, we deduce that for all $x\in\ms$,
\begin{equation}
\label{thm:barycentreemp:e2}
P(F(x,.)-F(x^*,.))^2\le K^2_2K^{\alpha}_3 (P(F(x,.)-F(x^*,.)))^{\alpha\beta},
\end{equation}
and therefore, 
\begin{equation}
\label{proof:ub:e2}
\sigma^2(\delta)\le K^2_2K^{\alpha}_3\delta^{\alpha\beta}.
\end{equation}

Next, we provide an upper bound for $\esp \phi_n(\delta)$.
Let $\sigma_1,\dots,\sigma_n$ be a sequence of i.i.d. random signs, \emph{i.e.} such that $\prob(\sigma_i=-1)=\prob(\sigma_i=1)=1/2$, independent from the $Y_i$'s.
The symmetrization principle (see, e.g., Lemma 7.4 in \cite{vanHa16}) indicates that
\begin{align}
\esp \phi_n(\delta)&\le 2 \esp\sup\left\{\frac{1}{n}\sum_{i=1}^{n}\sigma_i(F(x,Y_i)- F(x^*,Y_i)): x\in \ms(\delta)\right\}
\nonumber\\
&= 2 \esp\sup\left\{\frac{1}{n}\sum_{i=1}^{n}\sigma_i F(x,Y_i): x\in \ms(\delta)\right\},
\nonumber
\end{align}
where the last line follows from the fact that the $\sigma_i$'s are centered and independent of the $Y_i$'s.
Now, introduce the set $\mathcal F=\{F(x,.):x\in M\}$, and observe that, conditionally on the $Y_i$'s, the process 
\[X_f:=\frac{1}{\sqrt n}\sum_{i=1}^{n}\sigma_i f(Y_i),\quad f\in \mathcal F,\]
satisfies the separability condition of Lemma \ref{lem:chain} due to the separability of $M$.
In addition, this process is subgaussian since
\[\forall f,g\in \mathcal F,\forall\theta\in \R,\quad \log\esp[e^{\theta(X_f-X_g)}|Y_1,\dots,Y_n]\le\frac{\theta^2d_n(f,g)^2}{2},\]
where
\[d_n(f,g)^2=\frac{1}{n}\sum_{i=1}^n(f(Y_i)-g(Y_i))^2\]
is the natural metric in $L^2(P_n)$.
Also, it satisfies the Lipchitz condition \[|X_f-X_g|\le \sqrt{n}d_n(f,g).\] 
Hence, denoting $\mathcal F(\delta)=\{F(x,.):x\in \ms(\delta)\}$ and applying Lemma \ref{lem:chain}, we obtain 
\[\esp \phi_n(\delta)\le 2\esp\inf_{\e\ge 0}\left\{2\e+\frac{12}{\sqrt{n}} \int_{\e}^{+\infty}\sqrt{\log N( \mathcal F(\delta),d_n,u)}\,\textrm{d}u\right\}.\]
But combining \ref{A2} and \ref{A3}, we deduce that, almost surely,
\begin{align}
    N( \mathcal F(\delta),d_n,u)&\le N\left(M(\delta),d,\left(\frac{u}{K_2}\right)^{\frac{1}{\alpha}}\right)
    \nonumber\\
    &\le N\left(B(x^*,\sqrt{K_3\delta^{\beta}}),d,\left(\frac{u}{K_2}\right)^{\frac{1}{\alpha}}\right).
    \nonumber
\end{align}
As a result,
\begin{align}
    \esp \phi_n(\delta)&\le 2\inf_{\e\ge 0}\left\{2\e+\frac{12}{\sqrt{n}} \int_{\e}^{+\infty}\sqrt{\log N\left(B(x^*,\sqrt{K_3\delta^{\beta}}),d,\left(\frac{u}{K_2}\right)^{\frac{1}{\alpha}}\right)}\,\textrm{d}u\right\}
    \nonumber\\
    &= 2\inf_{\e\ge 0}\left\{2\e+\frac{12}{\sqrt{n}} \int_{\e}^{K_2K^{\alpha/2}_3\delta^{\alpha\beta/2}}\sqrt{\log N\left(B(x^*,\sqrt{K_3\delta^{\beta}}),d,\left(\frac{u}{K_2}\right)^{\frac{1}{\alpha}}\right)}\,\textrm{d}u\right\}
    \nonumber\\
    &\le 2\inf_{\e\ge 0}\left\{2\e+\frac{12}{\sqrt{n}} \int_{\e}^{K_2K^{\alpha/2}_3\delta^{\alpha\beta/2}}\sqrt{\frac{D}{\alpha}\log\left(\frac{C^{\alpha}K_2K^{\alpha/2}_3\delta^{\alpha\beta/2}}{u}\right)}\,\textrm{d}u\right\},
    \nonumber
\end{align}
where the last inequality follows from \ref{B1}.
Assuming without loss of generality that $C\ge 1$ and using the simple upper bound $\log(x)\le x-1\le x$, for all $x>0$, it follows from straightforward computations that
\begin{equation}
    \label{proof:ub:e3}
    \esp \phi_n(\delta)\le \frac{48C^{\frac{\alpha}{2}}D^{\frac{1}{2}}K_2K^{\frac{\alpha}{2}}_3}{\alpha^{\frac{1}{2}}}\cdot\delta^{\frac{\alpha\beta}{2}}n^{-\frac{1}{2}}.
\end{equation}

Combining \eqref{proof:ub:e1},\eqref{proof:ub:e2} and \eqref{proof:ub:e3} implies therefore that we have
\[\phi_n(\delta)\le b_n(\delta,t):= c_1\delta^{\frac{\alpha\beta}{2}}\sqrt{\frac{D}{n}}+c_2\delta^{\frac{\alpha\beta}{2}}\sqrt{\frac{t}{n}}+\frac{c_3 t}{n},\]
with probability at least $1-e^{-t}$ where 
\begin{equation}
\label{eq:csts}
c_1=\frac{96C^{\frac{\alpha}{2}}K_2K^{\frac{\alpha}{2}}_3}{\alpha^{\frac{1}{2}}},\quad c_2=\sqrt{2}K_2K^{\frac{\alpha}{2}}_3\quad\mbox{and}\quad c_3=\frac{16K_1}{3}.
\end{equation}
Using the fact that $\delta_n\le\phi_n(\delta_n)$ and Lemma \ref{lem:fixedpoint}, it follows that
\[\delta_n\le b_n(t):=\inf\left\{\tau>0:\sup_{\delta\ge \tau}\delta^{-1}b_n\left(\delta,\tfrac{t\delta}{\tau}\right)\le 1\right\},\]
with probability larger that $1-2e^{-t}$.
It therefore remains to provide an upper bound for $b_n(t)$.
Observing that $\alpha\beta\le 1$, and that for nonincreasing functions $h_j:[0,+\infty)\to [0,+\infty)$ we have
\[\inf\{\tau>0: h_1(\tau)+\dots+h_m(\tau)\le 1\}\le \max_{1\le j\le m}\inf\{\tau>0: h_j(\tau)\le 1/m\},\]
it follows that
\[b_n(t)\le \max\left\{(3c_1)^{\frac{2}{2-\alpha\beta}}\left(\frac{D}{ n}\right)^{\frac{1}{2-\alpha\beta}},(3c_2)^{\frac{2}{2-\alpha\beta}}\left(\frac{t}{n}\right)^{\frac{1}{2-\alpha\beta}},\frac{3c_3t}{n}\right\},\]
where $c_1,c_2,c_3$ are as in \eqref{eq:csts}.
To sum up, we have shown that, for all $n\ge 1$ and all $t>0$,
\begin{align*}
    &\int_M(F(x_n,y)-F(x^*,y))\,{\rm d}P(y)\\
    &\le \max\left\{(3c_1)^{\frac{2}{2-\alpha\beta}}\left(\frac{D}{ n}\right)^{\frac{1}{2-\alpha\beta}},(3c_2)^{\frac{2}{2-\alpha\beta}}\left(\frac{t}{n}\right)^{\frac{1}{2-\alpha\beta}},\frac{3c_3t}{n}\right\},
    \end{align*}
with probability at least $1-2e^{-t}$ where $c_1,c_2,c_3$ are as in \eqref{eq:csts}.
Finally note that, at the price of a slightly worst dependence on the constants, the last term in the maximum can be removed.
Indeed, if $t<n$, we have \[\frac{t}{n}\le\left(\frac{t}{n}\right)^{\frac{1}{2-\alpha\beta}},\] 
while, for $t\ge n$, assumption \ref{A3} implies that inequality 
\[
 \int_M(F(x_n,y)-F(x^*,y))\,{\rm d}P(y)\le 2K_3\left(\frac{t}{n}\right)^{\frac{1}{2-\alpha\beta}}\]
 trivially holds.
This completes the proof.
\end{proof}

\subsection{Proof of Example \ref{exm:finited}}
We prove inequalities \eqref{exm:finite3}.
The proof uses a classical strategy but is reproduced for completeness.
Define $\alpha_D:\ms\times(0,+\infty)\to [0,+\infty)$ by
 \begin{equation}
 \label{exm:finite1}
 \alpha_D(x,r)=\frac{\mu(B(x,r))}{r^D},
 \end{equation}
so that $\alpha_-\le\alpha_D(x,r)\le\alpha_+$ by assumption.
Let $x\in\ms$ and $0<\e\le r$ be fixed.
Let $x_1,\dots,x_N$ be a minimal $\e$-net for $B(x,r)$ of size $N=N(B(x,r),d,\e)$.
Then, by monotonicity and subadditivity of $\mu$, it follows that 
\[\alpha_D(x,r)r^D=\mu(B(x,r))\le\sum_{i=1}^N\mu(B(x_i,\e))=\e^D\sum_{i=1}^N\alpha_D(x_i,\e).\]
By definition of $\alpha_-$ and $\alpha_+$, we deduce that $\alpha_- r^D\le \alpha_+ N\e^{D}$ which proves the first inequality.
To prove the second inequality, define the $\e$-packing number $N_{\mathrm{pack}}(B(x,r),d,\e)$ of $B(x,r)$ as the maximal number $m$ of points $x_1,\dots,x_m\in B(x,r)$ such that $d(x_i,x_j)>\e$ for all $i\ne j$.
A collection of such points is called an $\e$-packing of $B(x,r)$.
It is a classical fact that the covering and packing numbers satisfy the duality property
\[N(B(x,r),d,\e)\le N_{\mathrm{pack}}(B(x,r),d,\e)\le N(B(x,r),d,\e/2).\]
In particular, to upper bound the $\e$-covering number of $B(x,r)$ it suffices to upper bound its $\e$-packing number.
Hence, let $x_1,\dots, x_m$ be a maximal $\e$-packing of $B(x,r)$ of size $m=N_{\mathrm{pack}}(B(x,r),d,\e)$.
Notice that the balls $B(x_i,\e/2)$, $i=1,\dots,m$ are disjoint by definition and included in $B(x,r+\e/2)$.
Hence, by monotonicity and additivity of $\mu$, it follows that 
\begin{align*}
\alpha_D\left(x,r+\frac{\e}{2}\right)\left(r+\frac{\e}{2}\right)^D=\mu\left(B\left(x,r+\frac{\e}{2}\right)\right)&\ge\mu\left(\bigcup_{i=1}^{m}B\left(x_i,\frac{\e}{2}\right)\right)\\
& =\sum_{i=1}^m\mu\left(B\left(x_i,\frac{\e}{2}\right)\right)\\
& =\left(\frac{\e}{2}\right)^D\sum_{i=1}^m\alpha_D\left(x_i,\frac{\e}{2}\right).
\end{align*}
The definition of $\alpha_-$ and $\alpha_+$ implies once again that
\[m\alpha_-\left(\frac{\e}{2}\right)^D\le \alpha_+\left(r+\frac{\e}{2}\right)^D,\]
and therefore
\[m\le \frac{\alpha_+}{\alpha_-}\left(\frac{2r}{\e}+1\right)^D\le\frac{\alpha_+}{\alpha_-}\left(\frac{3r}{\e}\right)^D,\]
which concludes the proof.

\subsection{Proof of Theorem \ref{thm:upperbound2}}
The proof is identical to that of Theorem \ref{thm:upperbound1} up to a minor modification.
Here, the control on the complexity of set $\ms$ is only global.
Hence, the inequality 
\[\esp \phi_n(\delta)\le 2\inf_{\e\ge 0}\left\{2\e+\frac{12}{\sqrt{n}} \int_{\e}^{K_2K^{\alpha/2}_3\delta^{\alpha\beta/2}}\sqrt{\log N\left(B(x^*,\sqrt{K_3\delta^{\beta}}),d,\left(\frac{u}{K_2}\right)^{\frac{1}{\alpha}}\right)}\,\textrm{d}u\right\},\]
used in the proof of Theorem \ref{thm:upperbound1}, while still valid, cannot be exploited as such.
We simply replace it by the upper bound
\[\esp \phi_n(\delta)\le 2\inf_{\e\ge 0}\left\{2\e+\frac{12}{\sqrt{n}} \int_{\e}^{K_2K^{\alpha/2}_3\delta^{\alpha\beta/2}}\sqrt{\log N\left(M,d,\left(\frac{u}{K_2}\right)^{\frac{1}{\alpha}}\right)}\,\textrm{d}u\right\}.\]
From then on, the proof is similar to that of Theorem \ref{thm:upperbound1}.

\subsection{Proof of Theorem \ref{thm:varequal}}

The proof of Theorem \ref{thm:varequal} follows by combining Lemmas \ref{lem:varequal} and \ref{lem:baryisexpbary} below.

\begin{defi}[Exponential barycenter]
\label{def:expbary}
For any $P\in\mathcal{P}_2(\ms)$, a point $x^{*}\in \ms$ is said to be an exponential barycenter of $P$ if 
\[
\int_\ms\int_\ms \langle \log_{x^{*}}(x),\log_{x^{*}}(y)\rangle_{x^{*}}\, \emph{d}P(x)\emph{d}P(y)=0.
\]
\end{defi}
\noindent For the definition of $\log_x$ and $\langle .,.\rangle_x$ we refer the reader to Appendix \ref{app:mg}.
Exponential barycenters where introduced in \cite{emery1991barycentre}.
The definition of an exponential barycenter mimics that of the Pettis integral of a Hilbert valued function and stands as an alternative way to define the analog of the mean value of an element of $\mathcal P_2(\ms)$.
Next is an important property of exponential barycenters.

\begin{thm}[Theorem 45 in \cite{yokota2012rigidity}]
\label{thm:yokota}
Suppose that ${\rm curv}(\ms)\ge 0$.
Let  $x^{*}$ be an exponential barycenter of $P\in\mathcal{P}_2(\ms)$.
Then the linear hull of the support of $P\circ \log^{-1}_{x^{*}}$ is isometric to a Hilbert space.
\end{thm}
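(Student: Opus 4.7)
Since this theorem is quoted from Yokota's paper, a fully self-contained proof lies beyond the natural scope of this sketch; I will outline the strategy his argument must follow and flag where the real difficulty sits. The plan has three ingredients: unpack the exponential-barycenter condition inside the tangent cone $T_{x^*}M$, appeal to the canonical splitting of $T_{x^*}M$ into a maximal Hilbert factor and a residual cone, and show the support of $P \circ \log_{x^*}^{-1}$ lies inside the Hilbert factor.

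First I would use the cone-metric expansion
\[\|v - w\|_{x^*}^2 = \|v\|_{x^*}^2 + \|w\|_{x^*}^2 - 2\langle v, w\rangle_{x^*}\]
with $v = \log_{x^*}(x)$, $w = \log_{x^*}(y)$, integrated against $P \otimes P$. Definition \ref{def:expbary} kills the cross term, giving
\[\int\int \|\log_{x^*}(x) - \log_{x^*}(y)\|_{x^*}^2\, {\rm d}P(x)\, {\rm d}P(y) = 2 \int d(x^*, y)^2\, {\rm d}P(y).\]
Non-negative curvature supplies the companion pointwise comparison $\|\log_{x^*}(x) - \log_{x^*}(y)\|_{x^*} \ge d(x, y)$, obtained by passing the Aleksandrov triangle comparison to the tangent cone.

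Next I would invoke the structure theory of non-negatively curved Aleksandrov cones to split $T_{x^*}M$ canonically as a product $H \times C$, with $H$ a maximal Hilbert factor and $C$ an NNC cone containing no Euclidean direction. The crux is showing that $S := \mathrm{supp}(P \circ \log_{x^*}^{-1})$ lies in $H$. Projecting the exponential-barycenter identity onto $C$ would produce a non-trivial mean direction in $C$; in an NNC cone with no Hilbert factor such a direction forces a Busemann line, which splits off an extra Euclidean factor and contradicts the maximality of $H$. Once $S \subset H$, the linear hull of $S$ is automatically a Hilbert subspace.

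The main obstacle is the canonical splitting of $T_{x^*}M$ and the rigidity statement driving the last step. These rely on the fine Aleksandrov-geometric machinery of Perelman and Petrunin, refined by Yokota in \cite{yokota2012rigidity}, which I would cite as a black box rather than reprove. The conceptual content extracted by the present paper is precisely this conversion from an exponential-barycenter condition to a Hilbert-linearity statement, which is subsequently combined with Lemma \ref{lem:baryisexpbary} and the identity \eqref{eq:vik} to yield Theorem \ref{thm:varequal}.
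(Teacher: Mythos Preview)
The paper does not prove this theorem. It is stated as Theorem 45 of \cite{yokota2012rigidity} and used as a black box; the only addendum is a remark explaining why the separability hypothesis in Yokota's original statement can be dropped. So there is no ``paper's own proof'' to compare your attempt against, and you correctly flag this at the outset.

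On the substance of your sketch: the opening computation is fine, but the core of the argument you outline---a canonical splitting $T_{x^*}M = H \times C$ followed by a projection argument forcing $S\subset H$---is not quite how Yokota's proof runs, and the step ``projecting the exponential-barycenter identity onto $C$ would produce a non-trivial mean direction in $C$'' is where the sketch goes soft. The actual argument is a rigidity analysis of the Lang--Schroeder inequality
\[
\int_M\int_M \langle \log_{x^*}(x),\log_{x^*}(y)\rangle_{x^*}\,{\rm d}P(x)\,{\rm d}P(y)\ge 0,
\]
valid in any space with $\curv\ge 0$ (this is the ``first statement of Theorem 45'' invoked later in the proof of Lemma \ref{lem:baryisexpbary}). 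The exponential-barycenter condition is precisely the equality case, and Yokota's contribution is to trace that equality back through the chain of Aleksandrov comparison inequalities to show each relevant triangle is flat, hence the logarithmic image spans a Hilbert subspace. Your splitting-theorem route might be made to work, but it presupposes a maximal Hilbert-factor decomposition of the tangent cone that is itself nontrivial and close in difficulty to the result you are trying to prove; the rigidity-of-equality route is more direct.
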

\begin{rem}
Note that Theorem 45 of \cite{yokota2012rigidity} requires the tangent cone to be separable.
It is not clear to us whether the tangent cone of a separable geodesic space with curvature bounded below is always separable.
However, separability is used only to approximate (a countable number) of integrals w.r.t a measure by the same integral w.r.t. a finitely supported measure.
This can be derived from the law of large number and thus separability is not necessary.
\end{rem}
Using Theorem \ref{thm:yokota} we prove next that the identity of Theorem \ref{thm:varequal} holds for exponential barycenters.
\begin{lem}
\label{lem:varequal}
Suppose that ${\rm curv}(\ms)\ge 0$.
Let $P\in\mathcal P_2(\ms)$ and $x^*$ be an exponential barycenter of $P$.
Then, for all $x\in \ms$,
\[d(x,x^*)^2 \int_{\ms}k^x_{x^*}(y)\,{\rm d}P(y)= \int_{\ms}(d(x,y)^2-d(x^*,y)^2)\,{\rm d}P(y),\]
where, for all $x\ne x^*$ and all $y$, $k^x_{x^*}(y)$ is as in \eqref{eq:k}.
\end{lem}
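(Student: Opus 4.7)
The plan is to reduce the identity to a Hilbert space computation by combining Theorem \ref{thm:yokota} with the law of cosines in the tangent cone $T_{x^*}\ms$. First, I would apply Theorem \ref{thm:yokota} to conclude that the closed linear hull $H$ of $\log_{x^*}(\mathrm{supp}(P))$ is isometric to a Hilbert space. Under this isometry, the exponential barycenter condition
\[
\int_{\ms}\int_{\ms}\langle\log_{x^*}(a),\log_{x^*}(b)\rangle_{x^*}\,\mathrm{d}P(a)\mathrm{d}P(b)=0
\]
rewrites as $\bigl\|\int_{\ms}\log_{x^*}(y)\,\mathrm{d}P(y)\bigr\|_{x^*}^2=0$, so the Bochner integral $\int_{\ms}\log_{x^*}(y)\,\mathrm{d}P(y)$ vanishes as an element of $H$.

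Next, for fixed $x\in\ms$, I would invoke the law of cosines in the metric tangent cone, which by the very definition of $\langle\cdot,\cdot\rangle_{x^*}$ yields
\[
\|\log_{x^*}(x)-\log_{x^*}(y)\|_{x^*}^2=\|\log_{x^*}(x)\|_{x^*}^2+\|\log_{x^*}(y)\|_{x^*}^2-2\langle\log_{x^*}(x),\log_{x^*}(y)\rangle_{x^*}.
\]
Using $\|\log_{x^*}(x)\|_{x^*}=d(x,x^*)$ and $\|\log_{x^*}(y)\|_{x^*}=d(x^*,y)$, substituting into the definition of $k^x_{x^*}(y)$ gives the pointwise identity
\[
k^x_{x^*}(y)\,d(x,x^*)^2 = d(x,y)^2 - d(x^*,y)^2 + 2\langle\log_{x^*}(x),\log_{x^*}(y)\rangle_{x^*}.
\]
Integrating against $P$ reduces the claim to showing that the cross term $\int_{\ms}\langle\log_{x^*}(x),\log_{x^*}(y)\rangle_{x^*}\,\mathrm{d}P(y)$ vanishes. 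When $\log_{x^*}(x)\in H$, this is immediate from the first step and linearity of the Hilbert inner product; for a general $x\in\ms$, one argues that the pairing $u\in H\mapsto\langle\log_{x^*}(x),u\rangle_{x^*}$ is linear on $H$ (which can be verified by expanding both sides of the linearity identity via the law of cosines and exploiting the Hilbertian structure of $H$), then again invokes the vanishing of $\int\log_{x^*}(y)\,\mathrm{d}P(y)$ in $H$.

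The main obstacle I anticipate is precisely this last linearity argument: ensuring that pairing a tangent vector potentially outside the Hilbert hull $H$ with vectors in $H$ behaves linearly. This is a geometric statement about tangent cones in Aleksandrov spaces of non-negative curvature, and is where the additional structure provided by Yokota's theorem is essential beyond the mere metric cone law of cosines; every other step is a direct, largely algebraic manipulation.
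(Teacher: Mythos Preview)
Your proposal is correct and is in fact a cleaner packaging of the paper's own argument. The paper introduces an auxiliary geodesic $x_t$ from $x^*$ to $x$, uses the cone identity
\[
\|y-x_t\|^2=(1-t)\|y-x^*\|^2+t\|y-x\|^2-t(1-t)\|x-x^*\|^2
\]
to derive a $t$-dependent pointwise relation, integrates, invokes Yokota's theorem to compute $P(\|x^*-\cdot\|^2-\|x_t-\cdot\|^2)=-t^2\|x^*-x\|^2$, and then lets $t\to 0$. If one expands that computation, however, the $t$-dependence cancels exactly: the paper's pointwise relation is identical to your law-of-cosines identity $k^x_{x^*}(y)\,d(x,x^*)^2=d(x,y)^2-d(x^*,y)^2+2\langle\log_{x^*}(x),\log_{x^*}(y)\rangle_{x^*}$ for every $t$, and the claimed value of $P(\|x^*-\cdot\|^2-\|x_t-\cdot\|^2)$ is equivalent to the vanishing of $\int\langle\log_{x^*}(x),\log_{x^*}(y)\rangle_{x^*}\,dP(y)$. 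So the geodesic and the limit are cosmetic; your direct route via the law of cosines is the more transparent formulation.

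Both arguments ultimately hinge on exactly the step you flag: that $\int\langle\log_{x^*}(x),\log_{x^*}(y)\rangle_{x^*}\,dP(y)=0$ for \emph{every} $x\in\ms$, not only those with $\log_{x^*}(x)\in H$. The paper simply attributes this to Yokota's theorem without further comment, whereas you are explicit that this requires the linearity of $u\in H\mapsto\langle\log_{x^*}(x),u\rangle_{x^*}$. Your awareness of this subtlety is appropriate; it is the one genuinely non-algebraic point in the proof and is where the Hilbertian structure from Yokota's theorem does real work.
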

\begin{proof}[Proof of Lemma \ref{lem:varequal}]
Fix an exponential barycenter $x^{*}$ of $P$.
For brevity, denote $\log=\log_{x^*}$, $\|.\|=\|.\|_{x^*}$ and $\langle .,.\rangle=\langle .,.\rangle_{x^*}$.
Given any $x\in \ms$, let $x_{t}$ be a geodesic connecting $x^{*}$ to $x$ in $\ms$.
It is an easy exercise to check that $t\mapsto\log(x_t)$ is a geodesic connecting $\log(x^*)$ to $\log(x)$ in $T_{x^{*}}\ms$.
According to Theorem \ref{thm:yokota}, and the geometry of a Hilbert space, it follows that, for all $x,y\in \ms$ and all $t\in [0,1]$,
\[
\|y-x_t\|^2 =(1-t)\|y-x^{*}\|^2+t\|y-x\|^2-t(1-t)\|x-x^*\|^2,
\]
where here and throughout, we denote both a point $x\in \ms$ and its image $\log(x)$ in the tangent cone $T_{x^{*}}\ms$ by the same symbol $x$ when there is no risk of confusion.
Now since $\mathrm{curv}(\ms)\ge 0$, we have $d(x,y)\le \|x-y\|$ for all $x,y\in \ms$, with equality if $x=x^{*}$ or $y=x^{*}$.
Hence, it follows from the previous identity that, for all $x,y\in \ms$ and all $t\in (0,1)$,
\begin{align*}
t(1-t)d(x,x^*)^2&= (1-t)d(x^*,y)^2+t\|y-x\|^2-\|y-x_t\|^2\\
&=t( d(x,y)^2 - d(x^*,y)^2)+ (\|y-x^*\|^2 - \|y-x_t\|^2)\\
&\quad+t\left(1-k^x_{x^*}(y)\right)d(x,x^*)^2.
\end{align*}
Reordering terms and dividing by $t$, we obtain
\begin{equation}
\label{eq:propkx}
(k^x_{x^*}(y)-t)d(x,x^*)^2=(d(x,y)^2 - d(x^*,y)^2)+ \frac{1}{t}(\|y-x^*\|^2 - \|y-x_t\|^2).
\end{equation}
Integrating with respect to $P({\textrm d}y)$, we obtain
\[
(P k^x_{x^*}(.)-t)d(x,x^*)^2= P(d(x,.)^2 - d(x^*,.)^2)+ \frac{P(\|x^{*}-.\|^2 - \|x_t-.\|^2)}{t}.
\]
Finally, using Theorem \ref{thm:yokota}, we get that
\[P(\|x^{*}-.\|^2 - \|x_t-.\|^2)=-t^2\|x^*-x\|^2.\]
Hence, letting $t\to 0$ in the previous identity leads to the desired result.
\end{proof}

\begin{lem}
\label{lem:baryisexpbary}
Suppose that ${\rm curv}(\ms)\ge 0$ and let $P\in\mathcal{P}_2(\ms)$.
Then a barycenter of $P$ is an exponential barycenter of $P$.
\end{lem}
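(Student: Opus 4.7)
The plan is to combine a first-variation argument along geodesics emanating from the barycenter with the Toponogov-type comparison inequality available under $\mathrm{curv}(M)\ge 0$, and then to lift the problem to the tangent cone at $x^*$ in order to recover the missing lower bound. Let $x^*$ be a barycenter of $P$, set $F(x)=\int_M d(x,y)^2\,\mathrm{d}P(y)$, and pick any $x\in M$ together with a geodesic $\gamma:[0,1]\to M$ satisfying $\gamma(0)=x^*$ and $\gamma(1)=x$. The law of cosines in the tangent cone at $x^*$, a direct consequence of $\mathrm{curv}(M)\ge 0$, gives the pointwise estimate
\[
d(\gamma(t),y)^2 \;\le\; t^2\, d(x^*,x)^2 + d(x^*,y)^2 - 2t\, \langle\log_{x^*}(x),\log_{x^*}(y)\rangle_{x^*},\qquad y\in M.
\]
Integrating against $P$, using the minimising property $F(\gamma(t))\ge F(x^*)$, dividing by $t>0$ and letting $t\to 0^+$ (dominated convergence is valid since $P\in\mathcal{P}_2(M)$) produces the one-sided first-variation inequality
\[
\int_M \langle\log_{x^*}(x),\log_{x^*}(y)\rangle_{x^*}\,\mathrm{d}P(y)\;\le\;0,\qquad x\in M.
\]
Integrating this against $\mathrm{d}P(x)$ immediately proves one half of the exponential-barycenter identity.

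For the matching lower bound I would push $P$ forward to $Q:=(\log_{x^*})_{\#}P$ on the tangent cone $(T_{x^*}M,d_T)$, which is itself a separable complete geodesic space of $\mathrm{curv}\ge 0$ with apex $0$. Combining the inequality $d\le d_T$ (another consequence of $\mathrm{curv}(M)\ge 0$) with the minimality of $x^*$, one checks that the apex $0$ is itself a barycenter of $Q$ in the cone, at least on the radial hull of $\log_{x^*}(\mathrm{supp}\,P)$, and then on all of $T_{x^*}M$ by density. By the conical self-similarity at the apex (the tangent cone of $(T_{x^*}M,d_T)$ at $0$ is $T_{x^*}M$ itself), applying the first-variation step to $Q$ at $0$ returns the dual bound $\int\langle v,w\rangle_{x^*}\,\mathrm{d}Q(w)\le 0$ for every $v\in T_{x^*}M$. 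Rewriting via the cone identity $2\langle u,v\rangle_{x^*}=\|u\|_{x^*}^2+\|v\|_{x^*}^2-d_T(u,v)^2$, the desired double integral equals $F(x^*)-\tfrac{1}{2}\int\!\!\int d_T(u,v)^2\,\mathrm{d}Q(u)\mathrm{d}Q(v)$, so the matching equality reduces to the cone-side variance identity $\int\!\!\int d_T(u,v)^2\,\mathrm{d}Q(u)\mathrm{d}Q(v)=2F(x^*)$.

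\emph{Main obstacle.} The delicate step is this reverse inequality: the first variation is intrinsically one-sided both on $M$ and on the cone at $0$, so equality genuinely requires some extra rigidity. I expect the finishing move to come from showing $\int d_T(w,v)^2\,\mathrm{d}Q(v)\le \|w\|_{x^*}^2+\int\|v\|_{x^*}^2\,\mathrm{d}Q(v)$ for $w\in\mathrm{supp}(Q)$ — a $\mathrm{curv}\ge 0$ rigidity at the apex in the spirit of Yokota's Theorem~\ref{thm:yokota} — after which integration against $\mathrm{d}Q(w)$ yields the matching bound and closes the argument.
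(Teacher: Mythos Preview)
Your one-sided inequality $\int_M\langle\log_{x^*}(x),\log_{x^*}(y)\rangle_{x^*}\,\mathrm{d}P(y)\le 0$ is correct and is exactly what the paper does for that half: the paper also observes that $x^*$ minimises $u\mapsto\int\|\log_{x^*}(y)-u\|_{x^*}^2\,\mathrm{d}P(y)$ over the tangent cone (since $d\le\|\cdot\|_{x^*}$ with equality at $x^*$), and the first variation along $t\cdot\log_{x^*}(x)$ gives precisely your inequality.

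The gap is the reverse inequality, and your proposed route does not close it. Pushing $P$ to $Q$ on the cone and noting that the apex is a barycenter of $Q$ only reproduces the \emph{same} one-sided conclusion $\int\langle v,w\rangle_{x^*}\,\mathrm{d}Q(w)\le 0$; it cannot by itself yield $\ge 0$. Your ``finishing move''---the pointwise bound $\int d_T(w,v)^2\,\mathrm{d}Q(v)\le\|w\|_{x^*}^2+\int\|v\|_{x^*}^2\,\mathrm{d}Q(v)$ for $w\in\mathrm{supp}(Q)$---is equivalent to $\int\langle w,v\rangle_{x^*}\,\mathrm{d}Q(v)\ge 0$ on the support, which is \emph{stronger} than the double-integral inequality you are trying to prove and is false for general measures on general cones of nonnegative curvature. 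There is no curv$\,\ge 0$ mechanism in your sketch that distinguishes barycentric $Q$ from arbitrary $Q$ at this step; you are effectively assuming the conclusion.

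The paper resolves this half by a direct citation: the first statement of Theorem~45 in Yokota's paper asserts, for any barycenter $x^*$ in a space with $\mathrm{curv}(M)\ge 0$, that $\int\!\!\int\langle\log_{x^*}(x),\log_{x^*}(y)\rangle_{x^*}\,\mathrm{d}P(x)\mathrm{d}P(y)\ge 0$. This is a genuinely nontrivial input (its proof uses the Lang--Schroeder inequality, i.e.\ a multi-point comparison in nonnegatively curved spaces), and it is the piece your argument is missing. Combined with the $\le 0$ half, equality follows.
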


\begin{proof}[Proof of Lemma \ref{lem:baryisexpbary}]
Fix a barycenter $x^*$ of $P$.
As in the proof of the previous Lemma, denote $\log=\log_{x^*}$, $\|.\|=\|.\|_{x^*}$ and $\langle .,.\rangle=\langle .,.\rangle_{x^*}$ for brevity.
Also, we denote both a point $x\in \ms$ and its image $\log(x)$ in the tangent cone $T_{x^{*}}\ms$ by the same symbol $x$ when there is no risk of confusion.
Since $\mathrm{curv}(\ms)\ge 0$, the first statement of Theorem 45 in \cite{yokota2012rigidity} implies that 
\[
\int_\ms\int_\ms \langle x,y\rangle\, \textrm{d}P(x)\textrm{d}P(y)\ge 0.
\]
Moreover, we know that $d(x,y)\le \|x-y\|$ for all $x,y\in \ms$ with equality if $x=x^{*}$ or $y=x^{*}$.
In particular, $x^{*}$ also minimises 
\[
y\mapsto \int_\ms \|x-y\|^2\,\textrm{d}P(x).
\]
Thus, for all $y\in \ms$, letting $y_t$ be a geodesic connecting $x^{*}$ to $y$ we get for all $t\in(0,1]$,
\begin{align*}
\int_\ms \|x\|^2\,\textrm{d}P(x)&=\int_\ms \|x-x^*\|^2\,\textrm{d}P(x)\\
&\le\int_\ms \|x-y_t\|^2\,\textrm{d}P(x)\\
&=\int_\ms \|x-t\cdot y\|^2\,\textrm{d}P(x)\\
&=\int_\ms (\|x\|^2 - 2t\langle x, y\rangle + t^2\|y\|^2)\,\textrm{d}P(x),
\end{align*}
where we have used the properties $\log$ and $\|.\|$ stated in section \ref{subsec:cone}.
Simplifying the above expression, we obtain, for all $t\in(0,1]$,
\[
2\int_\ms \left\langle x, y\right\rangle\,\textrm{d}P(x)\le t\|y\|^2.
\]
Letting $t\rightarrow 0$, we get
\[
\int_\ms \langle x,y\rangle\textrm{d}P(x)\le 0.
\]
Integrating with respect to $y$, we obtain 
\[
\int_\ms\int_\ms \langle x,y\rangle\, \textrm{d}P(x)\textrm{d}P(y)\le 0.
\]
Combining this observation with the first inequality of the proof shows that $x^*$ is an exponential barycenter.
\end{proof}

\subsection{Proof of Theorem \ref{thm:extendgeod}}
Consider $y$ in the support of $P$, denote $y_{\lambda}=\gamma^+_y(1+\lambda)=e_{\lambda}(y)$ and consider the map $\sigma_y:[0,1]\to\ms$ defined by $\sigma_y(t)=\gamma^+_y(t(1+\lambda))$.
By assumption, $\sigma_{y}$ is a geodesic connecting $x^*$ to $y_{\lambda}$.
In addition, we have by construction that \[\sigma_{y}(\tau)= y\quad\mbox{where}\quad\tau=\frac{1}{1+\lambda}.\]
It follows from the properties of the map $\log_y$ listed in appendix \ref{app:mg} that
\[y=(1-\tau)x^*+\tau y_{\lambda},\]
where we identify a point $u$ and its image $\log_y(u)$ in $T_y\ms$.
Now since $\mathrm{curv}(\ms)\ge 0$ (see Proposition \ref{pro:nongeoboundedcurv}), we know that $\mathrm{curv}(T_y\ms)\ge 0$ so that, for all $x\in T_y\ms$,
\begin{align*}
    \|x-y\|^2_y&\ge(1-\tau)\|x-x^*\|^2_y+\tau\|x-y_{\lambda}\|^2_y-\tau(1-\tau)\|y_{\lambda}-x^*\|^2_y\\
    &=\frac{\lambda}{1+\lambda}\|x-x^*\|^2_y+\frac{1}{1+\lambda}\|x-y_{\lambda}\|^2_y-\frac{\lambda}{(1+\lambda)^2}\|y_{\lambda}-x^*\|^2_y.
\end{align*}
Using the fact that $\|y_{\lambda}-x^*\|^2_y=(1+\lambda)^2\|y-x^*\|^2_y$ and the fact that $d(u,v)\le \|u-v\|_y$, with equality if $u=y$ or $v=y$, we deduce from the inequality above that
\begin{align*}
    \frac{\lambda}{1+\lambda}d(x,x^*)^2&\le\frac{\lambda}{1+\lambda}\|x-x^*\|^2_y\\
    &\le d(x,y)^2-\frac{1}{1+\lambda}\|x-y_{\lambda}\|^2_y+\lambda d(x^*,y)^2\\
    &=(d(x,y)^2-d(x^*,y)^2)-\frac{1}{1+\lambda}\|x-y_{\lambda}\|^2_y+(1+\lambda)d(x^*,y)^2.
\end{align*}
Integrating this inequality with respect to ${\rm d}P(y)$, it follows that
\begin{align*}
    \frac{\lambda}{1+\lambda}d(x,x^*)^2&\le\int_{\ms} (d(x,y)^2-d(x^*,y)^2){\rm d}P(y)\\
    &+\int_{\ms}\left((1+\lambda)d(x^*,y)^2-\frac{1}{1+\lambda}\|x-y_{\lambda}\|^2_y\right){\rm d}P(y).
\end{align*}
To conclude the proof, it remains to show that, for all $x\in\ms$, 
\[\rho(x):=\int_{\ms}\left((1+\lambda)d(x^*,y)^2-\frac{1}{1+\lambda}\|x-y_{\lambda}\|^2_y\right){\rm d}P(y)\le0.\]
Observing that  \[(1+\lambda)^2d(x^*,y)^2=d(x^*,y_{\lambda})^2\quad\mbox{and that}\quad d(x,y_{\lambda})\le\|x-y_{\lambda}\|_{y},\]
we deduce that
\begin{align*}
    \rho(x)&\le\frac{1}{1+\lambda}\int_{\ms}(d(x^*,y_{\lambda})^2-d(x,y_{\lambda})^2){\rm d}P(y)\\
    &=\frac{1}{1+\lambda}\int_{\ms}(d(x^*,y)^2-d(x,y)^2){\rm d}P_{\lambda}(y),
\end{align*}
where we have used the fact that $y_{\lambda}=e_{\lambda}(y)$ and the fact that $P_{\lambda}=(e_{\lambda})_*P$.
Hence, for all $x\in \ms$, inequality $\rho(x)\le 0$ follows from the fact that $x^*$ is a barycenter of $P_{\lambda}$ by assumption.

\subsection{Proof of Theorem \ref{thm:extendgeodp2}}

We start by a technical lemma.
Below $H$ is a Hilbert space with scalar product $\langle.,.\rangle$ and associated norm $\|.\|$.
\begin{lem}\label{lem:phiconvsub}
Let $\phi:H\to \R$ be a convex function and $\partial\phi\subset H^2$ its subdifferential defined by
\[
(x,y)\in\partial\phi \Leftrightarrow \forall z\in H,\quad\phi(z)\ge\phi(x)+\langle y,z-x\rangle.
\]
Then, for all $c>0$,
\[
(x,y)\in\partial\phi\Leftrightarrow\forall z\in H,\quad\phi(z)\ge\phi(x)+\langle y,z-x\rangle -c\|x-z\|^2.
\]
\end{lem}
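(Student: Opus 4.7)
The forward implication is immediate: since $c\|x-z\|^2\ge 0$, the weaker inequality follows trivially from the definition of the subdifferential. The content of the lemma lies entirely in the converse direction, and the plan is to use convexity of $\phi$ to linearize the error term and then take a limit.

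Concretely, assume $\phi(z)\ge\phi(x)+\langle y,z-x\rangle -c\|x-z\|^2$ for every $z\in H$, and fix an arbitrary $z\in H$. For $t\in(0,1]$, set $z_t=(1-t)x+tz$, so that $\|x-z_t\|=t\|x-z\|$ and $z_t-x=t(z-x)$. Applying the hypothesis at the point $z_t$ gives
\[
\phi(z_t)\ge \phi(x)+t\langle y,z-x\rangle - ct^2\|x-z\|^2,
\]
while convexity of $\phi$ yields the complementary bound $\phi(z_t)\le(1-t)\phi(x)+t\phi(z)$. Combining the two and canceling $\phi(x)$ gives
\[
t\langle y,z-x\rangle - ct^2\|x-z\|^2 \le t\bigl(\phi(z)-\phi(x)\bigr).
\]
Dividing by $t>0$ and letting $t\to 0^+$ makes the quadratic term vanish, leaving $\langle y,z-x\rangle\le\phi(z)-\phi(x)$. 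Since $z$ was arbitrary, $(x,y)\in\partial\phi$.

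There is no real obstacle here; the only subtlety is remembering that the inequality must be applied along the segment from $x$ toward $z$ rather than at $z$ itself, so that the quadratic slack picks up an extra factor of $t$ and can be killed in the limit. This is the standard trick showing that a convex function which is subdifferentiable up to a quadratic error is in fact subdifferentiable in the classical sense, and it is essentially the Hilbert-space analogue of the argument used throughout the paper (e.g.\ in the proof of Theorem~\ref{thm:varequal}) where a linearization along a geodesic followed by $t\to 0$ extracts a first-order inequality from a second-order one.
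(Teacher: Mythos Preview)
Your proof is correct and essentially identical to the paper's own argument: both apply the hypothesis at the intermediate point $(1-t)x+tz$, combine with the convexity inequality $\phi((1-t)x+tz)\le(1-t)\phi(x)+t\phi(z)$, divide by $t$, and let $t\to 0$. The only difference is cosmetic ordering of the inequalities.
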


\begin{proof}[Proof of the Lemma]
One implication is obvious.
For the second implication, suppose $(x,y)\in H^2$ is such that
\begin{equation}
\label{lem:p2e1}
\forall z\in H,\quad\phi(z)\ge\phi(x)+\langle y,z-x\rangle -c\|z-x\|^2.
\end{equation}
Then, on the one hand, we get by convexity of $\varphi$ that, for all $z\in H$ and all $t\in(0,1)$, 
\begin{equation}
\label{lem:p2e2}
\phi(z)-\phi(x)\ge \frac{1}{t}(\phi(tz+(1-t)x)-\phi(x)).
\end{equation}
On the other hand, applying \eqref{lem:p2e1}, we obtain for all $z\in H$ and all $t\in(0,1)$,
\begin{equation}
\label{lem:p2e3}
\phi(tz+(1-t)x)-\phi(x) \ge t\langle y,z-x\rangle - ct^2\|z-x\|^2.
\end{equation}
Hence, combining \eqref{lem:p2e2} and \eqref{lem:p2e3}, we deduce that for all $z\in H$ and all $t\in(0,1)$,
\[\phi(z)-\phi(x)\ge \langle y,z-x\rangle - ct\|z-x\|^2.\]
Letting $t\to 0$ proves the other implication.
\end{proof}

We are now in position to prove Theorem \ref{thm:extendgeodp2}.
Fix $\mu,\nu\in S=\mathcal P_2(H)$ and denote $\gamma:[0,1]\to S$ a constant speed shortest path between $\mu$ and $\nu$.
By the Knott-Smith optimality criterion (see Theorem 2.12 in \cite{villani2003}), $\pi$ is an optimal transport plan of $(\mu,\nu)$ if and only if its support lies in the graph of the subdifferential of a convex function $\phi$, i.e.
\begin{equation}\label{eq:phisub}
(x,y)\in\supp \pi \Rightarrow \forall z\in H,\quad \phi(z)\ge \phi(x)+\langle y,z-x\rangle.
\end{equation}

Suppose first that, for some $\lambda >0$, $\gamma:[0,1]\to S$ can be extended to a function $\gamma^+:[0,1+\lambda]\to S$ that remains a shortest path between its endpoints $\mu=\gamma^+(0)=\gamma(0)$ and $\nu^{\lambda}:=\gamma^+(1+\lambda)$.
Then, by Theorem 7.2.2 in \cite{ambrosio2008gradient}, there exists an optimal transport plan $\pi^\lambda$ of $(\mu,\nu^\lambda)$ such that $\{\pi\}=\Gamma_o(\mu,\nu)$ is that the law of 
\[
(X,Y):=\left(X,\frac{\lambda}{1+\lambda}X+\frac{1}{1+\lambda}Y^\lambda\right)
\]
where $(X,Y^\lambda)\sim \pi^\lambda$.
In particular $Y^\lambda=(1+\lambda Y)-\lambda X$.
Therefore, there exists a convex function $\phi^\lambda$ such that denoting $y^\lambda=(1+\lambda)y-\lambda x$
\begin{align}
(x,y)\in\supp\pi&\Leftrightarrow (x,y^\lambda)\in\supp\pi^\lambda\nonumber\\
&\Rightarrow\forall z\in H, \phi^\lambda(z)\ge \phi^\lambda(x)+\langle y^\lambda,z-x\rangle\nonumber\\
&\Leftrightarrow \forall z\in H, \phi^\lambda(z)\ge \phi^\lambda(x)+(1+\lambda)\langle y,z-x\rangle-\lambda \langle x,z-x\rangle\nonumber\\
&\Leftrightarrow \forall z\in H, \phi^\lambda(z)+\frac{\lambda}{2}\|z\|^2\ge \phi^\lambda(x)+\frac{\lambda}{2}\|x\|^2+(1+\lambda)\langle y,z-x\rangle+\lambda \frac{1}{2}\|x-z\|^2\nonumber\\
&\Rightarrow \forall z\in H, \frac{\phi^\lambda(z)}{1+\lambda}+\frac{\lambda\|z\|^2}{2(1+\lambda)}\ge \frac{\phi^\lambda(x)}{1+\lambda}+\frac{\lambda\|x\|^2}{2(1+\lambda)}+\langle y,z-x\rangle\nonumber\\
&\Leftrightarrow \forall z\in H, \phi(z)\ge \phi(x)+\langle y,z-x\rangle,\label{eq:phil}
\end{align}
where we denote 
\[
\phi=\frac{\phi^\lambda}{1+\lambda}+\frac{1}{2}\frac{\lambda}{1+\lambda}\|.\|^2.
\]
Thus, $\supp \pi$ lies in the subdifferential of $\phi$ that is $\frac{\lambda}{1+\lambda}$-convex, since $\phi^\lambda$ is convex.

Conversely, suppose that there exists a $\frac{\lambda}{1+\lambda}$-convex function $\phi$ such that $\supp\pi$ for $\pi\in\Gamma_o(\mu,\nu)$ lies in the subdifferential of $\phi$.
Denote $(X,Y)\sim\pi$ and set $Y^\lambda=Y+\lambda(Y-X)\sim\nu^\lambda$ and $(X,Y^\lambda)\sim\pi^\lambda$.
Then $\pi^\lambda$ is an optimal transport plan between $\mu$ and $\nu^\lambda$ if and only if there exists a convex function $\phi^\lambda$ such that the $\supp\pi^\lambda$ lies in $\partial\phi^\lambda$.
In that case, 
\[
W_2^2(\mu,\nu^\lambda)=E\|Y^\lambda-X\|^2=(1+\lambda)^2W_2^2(\mu,\nu)=\frac{(1+\lambda)^2}{\lambda^2}W_2^2(\nu,\nu^\lambda),
\]
so that by Lemma 7.2.1 of \cite{ambrosio2008gradient}, $\nu$ is in the shortest path joining $\mu$ to $\nu^\lambda$, which is the desired result.
It thus just remains to prove that there exists a convex function $\phi^\lambda$ such that $\supp\pi^\lambda$ lies in $\partial\phi^\lambda$.

Set 
\[
\phi^\lambda=(1+\lambda)\phi - \frac{1}{2}\lambda\|.\|^2.
\]
$\phi^\lambda$ is convex since $\phi$ is $\frac{\lambda}{1+\lambda}$-convex.
Then, denoting again $y^\lambda=(1+\lambda)y-\lambda x$,
\begin{align}
(x,y^\lambda)\in\supp\pi^\lambda&\Leftrightarrow(x,y)\in\supp\pi\nonumber\\
&\Rightarrow\forall z\in H, \phi(z)\ge \phi(x)+\langle y,z-x\rangle\nonumber\\
&\Leftrightarrow \forall z\in H, \frac{\phi^\lambda(z)}{1+\lambda}+\frac{\lambda\|z\|^2}{2(1+\lambda)}\ge \frac{\phi^\lambda(x)}{1+\lambda}+\frac{\lambda\|x\|^2}{2(1+\lambda)}+\langle y,z-x\rangle\nonumber\\
&\Leftrightarrow\forall z\in H, \phi^\lambda(z)\ge \phi^\lambda(x)+\langle y^\lambda,z-x\rangle-\lambda\frac{1}{2}\|x-z\|^2.\label{eq:phil2}
\end{align}
Since $\phi^\lambda$ is convex, by Lemma \ref{lem:phiconvsub}, \eqref{eq:phil2} is equivalent to 
\[
\forall z\in H, \phi^\lambda(z)\ge \phi^\lambda(x)+\langle y^\lambda,z-x\rangle,
\]
that is, $\supp \pi^\lambda$ lies in the subdifferential of the convex function $\phi^\lambda$.

\subsection{Proof of Theorem \ref{thm:fvi}}
For all $x\in M$, denote
\[V(x)=\int_{\ms} F(x,y)\,\mathrm{d}P(y).\]
Now fix $x\in\ms$ and let $\gamma:[0,1]\to M$ be a path connecting $x^*$ to $x$ and along which $V$ is $(k,\beta)$-convex.
Then, for all $t\in[0,1]$,
\[V(\gamma_t)\le (1-t)V(x^*)+tV(x)-kt(1-t)d(x,x^*)^{\frac{2}{\beta}}.\]
Reordering these terms we obtain, for all $t\in(0,1)$,  
\begin{align*}
kd(x,x^*)^{\frac{2}{\beta}}&\le \frac{V(x)-V(x^*)}{1-t}+\frac{V(x^*)-V(\gamma_t)}{t(1-t)}\\
&\le \frac{V(x)-V(x^*)}{1-t},
\end{align*}
since $V(x^*)\le V(\gamma_t)$ by definition of $x^*$.
Letting $t$ tend to $0$ concludes the proof.

\subsection{Proof of Theorem \ref{thm:dfA2}}

For any $\mu\in M\subset\mathcal P_2(E)$, let $g_{\mu}$ be the density of $\mu$ with respect to the reference measure $m$.
Fix $\nu,\mu,\mu'\in M$ and denote for brevity 
\[a_\mu(x)=\frac{g_\mu}{g_\nu}(x),\quad a_{\mu'}(x)=\frac{g_{\mu'}}{g_\nu}(x)\myand \delta_{\mu,\mu'}(x)=\dfrac{ f(a_{\mu}(x))- f(a_{\mu'}(x))}{ a_{\mu}(x)- a_{\mu'}(x)}.\]
Then, we obtain
\begin{align}
    D_f(\mu,\nu)-D_f(\mu',\nu) &=\int( f(a_{\mu}(x))- f(a_{\mu'}(x)))g_\nu(x)\mathrm{d}m(x)
    \nonumber\\
    &=\int \dfrac{ f(a_{\mu}(x))- f(a_{\mu'}(x))}{ a_{\mu}(x)- a_{\mu'}(x)}(\mathrm{d}\mu(x)-\mathrm{d}\mu'(x))
    \nonumber\\
    &=\int\delta_{\mu,\mu'}(x)(\mathrm{d}\mu(x)-\mathrm{d}\mu'(x)).
    \label{eq:df1}
\end{align}
Let us prove that $\delta_{\mu,\mu'}$ is Lipschitz.
To that aim, observe that since $f'$ is $L$-Lipschitz, for all $x,y\in E$, 
\begin{align}
|\delta_{\mu,\mu'}(x)-\delta_{\mu,\mu'}(y)|&=|\int_0^1 \left(f'((1-t)a_{\mu}(x)+ta_{\mu'}(x))-f'((1-t)a_{\mu}(y)+ta_{\mu'}(y))\right)\mathrm{d}t|\nonumber\\
&\le \int_0^1 |f'((1-t)a_{\mu}(x)+ta_{\mu'}(x))-f'((1-t)a_{\mu}(y)+ta_{\mu'}(y))|\mathrm{d}t\nonumber\\
&\le L\int_0^1 |(1-t)(a_{\mu}(x)-a_{\mu}(y))+t(a_{\mu'}(x)-a_{\mu'}(y))|\mathrm{d}t\nonumber\\
&\le L\max\{|a_{\mu}(x)-a_{\mu}(y)|,|a_{\mu'}(x)-a_{\mu'}(y)|\}.
\label{eq:df2}
\end{align}
Then, we see that
\begin{align*}
    a_{\mu}(x)-a_{\mu}(y)&=\frac{g_{\mu}(x)g_{\nu}(y)-g_{\mu}(y)g_{\nu}(x)}{g_{\nu}(x)g_{\nu}(y)}\\
    &=\frac{g_{\mu}(x)(g_{\nu}(y)-g_{\nu}(x))+g_{\nu}(x)(g_{\mu}(x)-g_{\mu}(y))}{g_{\nu}(x)g_{\nu}(y)},
\end{align*}
which implies, under the conditions of the theorem, that
\begin{equation}
\label{eq:df3}
|a_{\mu}(x)-a_{\mu}(y)|\le \frac{2\Lambda c_+}{c^2_-}\|x-y\|.
\end{equation}
Combining \eqref{eq:df2} and \eqref{eq:df3} we deduce that $\delta_{\mu,\mu'}$ is Lipschitz with constant at most $2L\Lambda c_+/c^2_-$.
Using \eqref{eq:df1} and the Kantorovich-Rubinstein formula (see remark 6.5 in \cite{villani2008optimal}) we therefore obtain that 
\[|D_f(\mu,\nu)-D_f(\mu',\nu)|\le \frac{2L\Lambda c_+}{c^2_-}W_1(\mu,\mu').\]
The result follows by observing that $W_1(\mu,\mu')\le W_2(\mu,\mu')$ (see remark 6.6 in \cite{villani2008optimal}).

\subsection{Proof of Theorem \ref{thm:dfA3}}
First note that the second statement of the theorem follows by Theorem \ref{thm:fvi}.
Hence, we need only to prove that, for all $\mu_0,\mu_1\in M$, there exists a geodesic $\mu_t$ connecting $\mu_0$ to $\mu_1$ such that, for all $0\le t\le 1$, 
\[D_f(\mu_t,\nu)\le (1-t)D_f(\mu_0,\nu)+tD_f(\mu_1,\nu)-c\lambda t(1-t)W_2(\mu_0,\mu_1)^2.\]

If either $\mu_0$ or $\mu_1$ is not absolutely continuous with respect to $\nu$, then the right hand side is $+\infty$ and the inequality trivially holds.
Suppose now that $\mu_0\ll \nu$ and $\mu_1\ll \nu$.
Since $\nu$, and therefore $\mu_0$, has a density with respect to the Lebesgue measure, there exists an optimal transport map $T:E\to E$ pushing $\mu_0$ to $\mu_1$.
Letting $T_t(x)=(1-t)x+tT(x)$, the curve $\mu_t=(T_t)_{\#}\mu_0$ defines a geodesic connecting $\mu_0$ to $\mu_1$ in $M$. For all $t\in[0,1]$, we denote 
\[\rho_t(x)=\frac{{\rm d}\mu_t}{{\rm d}x}(x),\]
so that, letting ${\rm d}\nu(x)=e^{-U(x)}{\rm d}x$, 
\[{\rm d}\mu_t(x)=\rho_t(x) e^{U(x)}{\rm d}\nu(x).\]
Letting $D T_t(x)=(1-t)I+tDT(x)$ denote the differential of $T_t$ at $x$, the change of variables formula 
\[\rho_0(x)=\rho_t(T_t(x)){\rm det}(DT_t(x)),\]
implies that
\begin{align}
    D_f(\mu_t,\nu)& =\int f(\rho_t(x)e^{U(x)})e^{-U(x)}\,{\rm d}x
    \nonumber\\
    & =\int f\left(\frac{\rho_0(x)e^{U({T}_t(x))}}{{\rm det}(DT_t(x))}\right)\frac{{\rm det}(DT_t(x))}{\rho_0(x)e^{U({T}_t(x))}}\rho_0(x)\,{\rm d}x
    \nonumber\\
    &=\int h(s(t,x))\rho_0(x)\,{\rm d}x,
    \nonumber
\end{align}
where 
\begin{equation}
\label{proof:thm:internal:e1}
s(t,x)=-U(T_t(x))+\log{\rm det}(DT_t(x))-\log\rho_0(x).
\end{equation}
The transport map $T$ being the gradient of a convex function, the map $t\in[0,1]\mapsto \log{\rm det}(DT_t(x))$ is concave for any fixed $x\in E$.
The $\lambda$-convexity of $U$ therefore implies that, for all $t\in [0,1]$,
\[s(t,x)\ge (1-t)s(0,x)+ts(1,x)+\lambda t(1-t)\|x-{\rm T}(x)\|^2.\]
Using the assumptions on $h$, which imply in particular that it is decreasing, we deduce that
\begin{align}
    h(s(t,x))&\le h((1-t)s(0,x)+ts(1,x)+\lambda t(1-t)\|x-{\rm T}(x)\|^2)
    \nonumber\\
    &\le h((1-t)s(0,x)+ts(1,x))-c\lambda t(1-t)\|x-{\rm T}(x)\|^2
    \nonumber\\
    &\le (1-t)h(s(0,x))+th(s(1,x))-c\lambda t(1-t)\|x-{\rm T}(x)\|^2.
    \nonumber
\end{align}
Integrating the last inequality with respect to $\rho_0(x){\rm d}x$ yields
\[D_f(\mu_t,\nu)\le (1-t)D_f(\mu_0,\nu)+tD_f(\mu_1,\nu)- c\lambda t(1-t)W^2_2(\mu_0,\mu_1),\]
which is the desired result.

\subsection{Proof of Theorem \ref{thm:dflinear}}
Denote ${\rm d}\mu_i=g_i{\rm d}\nu$.
The linear interpolation $\ell_t=(1-t)\mu_0+t\mu_1$ therefore satisfies
\[\frac{{\rm d}\ell_t}{{\rm d}\nu}=(1-t)g_0+tg_1.\]
It follows from the $k$-convexity of $f$ that
\begin{align*}
D_f(\ell_t,\nu) &=\int f\left(\frac{{\rm d}\ell_t}{{\rm d}\nu}\right)\,{\rm d}\nu\\
&=\int f((1-t)g_0+tg_1)\,{\rm d}\nu\\
&\le (1-t)\int f(g_0)\,{\rm d}\nu+t\int f(g_1)\,{\rm d}\nu-k t(1-t)\int |g_1-g_0|^2\,{\rm d}\nu\\
&= (1-t)D_f(\mu_0,\nu)+tD_f(\mu_1,\nu)-kt(1-t)\int |g_1-g_0|^2\,{\rm d}\nu.
\end{align*}
According to Theorem 6.15 in \cite{villani2008optimal}, we know that 
\begin{align*}
    W_2(\mu_0,\mu_1)^2 &\le 2\int \|x\|^2|g_1(x)-g_0(x)|\,{\rm d}\nu(x)\\
    &\le 2m_4(\nu)^{1/2}\left(\int |g_1(x)-g_0(x)|^2\,{\rm d}\nu(x)\right)^{1/2}.
    \end{align*}
Hence, we deduce that  
\[D_f(\ell_t,\nu)\le (1-t)D_f(\mu_0,\nu)+tD_f(\mu_1,\nu)-\frac{k}{4m_4(\nu)}t(1-t)W_2(\mu_0,\mu_1)^4,\]
which proves the first statement.
The second statement follows directly from Theorem \ref{thm:fvi}.

\subsection{Proof of Theorem \ref{thm:interactionA2}}
For fixed $y\in E$, the fact that $x\in E\mapsto g(x,y)$ is $L$-Lipschitz, the Kantorovich-Rubinstein formula (see remark 6.5 in \cite{villani2008optimal}) and the fact that $W_1\le W_2$ (see remark 6.6 in \cite{villani2008optimal}) implies that, for all $\mu,\mu'\in\mathcal P_2(E)$,
\[|\int g(x,y)(\mathrm{d}\mu(x)-\mathrm{d}\mu'(x))|\le LW_1(\mu,\mu')\le LW_2(\mu,\mu').\]
Integrating with respect to $\nu\in \mathcal P_2(E)$ implies that
\begin{align*}
    |I_g(\mu,\nu)-I_g(\mu',\nu)|&\le\int|\int g(x,y)(\mathrm{d}\mu(x)-\mathrm{d}\mu'(x))|\mathrm{d}\nu(y)\\
    &\le LW_2(\mu,\mu').
\end{align*}

\subsection{Proof of Theorem \ref{thm:interactionA3}}

Given that $(E,\rho)$ is separable, complete, locally compact and geodesic, the space $(\mathcal P_2(E),W_2)$ is also geodesic according to Corollary 7.22 in \cite{villani2008optimal}.
In addition, given $\mu_0,\mu_1\in\mathcal P_2(E)$, a curve $(\mu_t)_{0\le t\le 1}$ in $\mathcal P_2(E)$ is a geodesic connecting $\mu_0$ to $\mu_1$ if and only if there exists a probability measure $\Pi$ on the set $\mathcal G(E)$ of all geodesics $\gamma:[0,1]\to E$ in $E$ such that 
\[\mu_t=(\mathrm{eval}_t)_*(\Pi),\]
where $\mathrm{eval}_t(\gamma)=\gamma_t$, for all $\gamma\in\mathcal G(E)$, and where $(\mathrm{eval}_0,\mathrm{eval}_1)_*(\Pi)$ is an optimal coupling of $\mu_0$ and $\mu_1$.
For such a geodesic, 

\begin{align*}
I_g(\mu_t,\nu)&=\int\int g(x,y)\,{\rm d}\mu_t(x){\rm d}\nu(y)\\
    & =\int\int g(\gamma_t,y)\,{\rm d}\Pi(\gamma){\rm d}\nu(y)\\
    & \le\int\int \left((1-t)g(\gamma_0,y)+tg(\gamma_1,y)-kt(1-t)\rho(\gamma_0,\gamma_1)^\frac{2}{\beta}\right)\,{\rm d}\Pi(\gamma){\rm d}\nu(y)\\
    &= (1-t)I_g(\mu_0,\nu)+tI_g(\mu_1,\nu)-kt(1-t)\int\rho(\gamma_0,\gamma_1)^\frac{2}{\beta}\,{\rm d}\Pi(\gamma)\\
    &\le (1-t)I_g(\mu_0,\nu)+tI_g(\mu_1,\nu)-kt(1-t)\left(\int\rho(\gamma_0,\gamma_1)^2\,{\rm d}\Pi(\gamma)\right)^{\frac{1}{\beta}}\\
    &= (1-t)I_g(\mu_0,\nu)+tI_g(\mu_1,\nu)-kt(1-t)W_2(\mu_0,\mu_1)^{\frac{2}{\beta}}.
\end{align*}
This completes the proof of $(1)$.
Statement $(2)$ follows directly by combining the first statement and Theorem \ref{thm:fvi}.

\appendix

\section{Metric geometry}
\label{app:mg}
\subsection{Geodesic spaces}
\label{subsec:geodesic}
Let $(\ms,d)$ be a metric space.
We call path in $\ms$ a continuous map $\gamma:I\to \ms$ defined on an interval $I\subset \R$.
The length $L(\gamma)\in[0,+\infty]$ of a path $\gamma:I\to \ms$ is defined by 
\[
L(\gamma):=\sup\sum_{i=0}^{n-1}d(\gamma(t_i),\gamma(t_{i+1})),\]
where the supremum is taken over all $n\ge 1$ and all $t_0\le\dots\le t_{n}$ in $I$.
A path is called rectifiable if it has finite length.
Two paths $\gamma_1$ and $\gamma_2$ are said to be equivalent if $\gamma_1\circ\varphi_1=\gamma_2\circ\varphi_2$ for non-decreasing and continuous functions $\varphi_1$ and $\varphi_2$.
In this case, $\gamma_1$ is said to be a reparametrisation of $\gamma_2$ and we check that $L(\gamma_1)=L(\gamma_2)$. A path $\gamma:[a,b]\to \ms$ is said to have constant speed if for all $a\le s\le t\le b$,
\begin{equation}
\label{cspeed} L(\gamma_{[s,t]})=\frac{t-s}{b-a}L(\gamma),
\end{equation}
where $\gamma_{[s,t]}$ denotes the restriction of $\gamma$ to $[s,t]$.

\begin{pro}
Any rectifiable path has a constant speed reparametrisation $\gamma:[0,1]\to \ms$.
\end{pro}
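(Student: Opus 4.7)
The plan is to construct the desired reparametrisation via the arc-length function. Let $\tilde\gamma:[a,b]\to M$ be the given rectifiable path and set $L:=L(\tilde\gamma)$. If $L=0$ then $\tilde\gamma$ is constant and the result is trivial, so I assume $L>0$. Define $s:[a,b]\to[0,L]$ by $s(t):=L(\tilde\gamma_{[a,t]})$. By the additivity of length under concatenation of sub-intervals, $s$ is non-decreasing with $s(a)=0$ and $s(b)=L$.

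The key step is to establish continuity of $s$. Fix $t_0\in[a,b]$ and $\varepsilon>0$. Using uniform continuity of $\tilde\gamma$ on the compact interval $[a,b]$, together with the supremum definition of $L$, I would pick a partition $a=u_0<u_1<\dots<u_n=b$ containing $t_0$ as a node and whose mesh is fine enough to ensure simultaneously $d(\tilde\gamma(u_i),\tilde\gamma(u_{i+1}))<\varepsilon/2$ for every $i$ and $\sum_{i=0}^{n-1}d(\tilde\gamma(u_i),\tilde\gamma(u_{i+1}))>L-\varepsilon/2$. Additivity of length yields $L=\sum_i L(\tilde\gamma_{[u_i,u_{i+1}]})$; combining this identity with the two previous inequalities forces each individual subinterval length to satisfy $L(\tilde\gamma_{[u_i,u_{i+1}]})<\varepsilon$. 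Choosing $\delta$ smaller than the distance from $t_0$ to its neighbouring partition nodes then yields $|s(t)-s(t_0)|<\varepsilon$ for every $t$ with $|t-t_0|<\delta$. This continuity step is the main obstacle of the proof; the rest is formal.

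Having $s$ continuous, I would define $\bar\gamma$ on the image of $s$ by $\bar\gamma(s(t)):=\tilde\gamma(t)$. This map is well defined because $s(t_1)=s(t_2)$ with $t_1\le t_2$ forces $d(\tilde\gamma(t_1),\tilde\gamma(t_2))\le L(\tilde\gamma_{[t_1,t_2]})=s(t_2)-s(t_1)=0$, and the same inequality shows that $\bar\gamma$ is $1$-Lipschitz on $s([a,b])$. Since $s$ is continuous with $s(a)=0$ and $s(b)=L$, the intermediate value theorem gives $s([a,b])=[0,L]$, so $\bar\gamma$ is in fact defined on all of $[0,L]$.

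Finally, set $\gamma(u):=\bar\gamma(Lu)$ for $u\in[0,1]$. Given $0\le u\le v\le 1$, choose $t_u,t_v\in[a,b]$ with $s(t_u)=Lu$ and $s(t_v)=Lv$; then $\gamma_{[u,v]}$ is a reparametrisation of $\tilde\gamma_{[t_u,t_v]}$, so $L(\gamma_{[u,v]})=L(\tilde\gamma_{[t_u,t_v]})=s(t_v)-s(t_u)=(v-u)L=(v-u)L(\gamma)$, which is precisely the constant-speed identity \eqref{cspeed}.
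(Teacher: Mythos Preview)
The paper states this proposition without proof; it appears in the appendix as standard background from metric geometry, so there is no argument in the paper to compare against. Your arc-length reparametrisation is the classical construction and is correct: the continuity argument for $s$ is sound (one should perhaps say explicitly that a partition achieving $\sum d(\tilde\gamma(u_i),\tilde\gamma(u_{i+1}))>L-\varepsilon/2$ exists by the supremum definition and can then be \emph{refined} to have small mesh without decreasing the sum), and the final step is justified since $\tilde\gamma_{[t_u,t_v]}=\gamma_{[u,v]}\circ\psi$ with $\psi(t)=s(t)/L$ continuous and non-decreasing, so the two paths are indeed equivalent in the paper's sense and hence have equal length.
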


Given $x,y \in \ms$, a path $\gamma:[a,b]\to \ms$ is said to connect $x$ to $y$ if $\gamma(a)=x$ and $\gamma(b)=y$.
By construction of the length function $L$, $d(x,y)\le L(\gamma)$ for any path $\gamma$ connecting $x$ to $y$.
The space $\ms$ is called a length space if, for all $x,y\in \ms$,
\begin{equation}
\label{dintrinsic}
d(x,y)=\inf_{\gamma} L(\gamma),
\end{equation}
where the infimum is taken over all paths $\gamma$ connecting $x$ to $y$.
A length space is said to be a geodesic space if, for all $x,y\in \ms$, the infimum on the right hand side of \eqref{dintrinsic} is attained.
\begin{defi}
In a geodesic space, we call geodesic between $x$ and $y$ any constant speed reparametrisation $\gamma:[0,1]\to \ms$ of a path attaining the infimum in \eqref{dintrinsic}.
\end{defi}
For a geodesic $\gamma$, it follows from its minimising properties that 
\[d(\gamma(s),\gamma(t))=L(\gamma_{[s,t]}),\]
for all $0\le s\le t\le 1$.
In particular, \eqref{cspeed} translates in this case as
\[d(\gamma(s),\gamma(t))=(t-s)d(\gamma(0),\gamma(1)),\]
for all $0\le s\le t\le 1$.
We end by a general characterization of geodesic spaces.
\begin{pro}
\label{def:midpoint}
Let $(M,d)$ be a metric space.
\begin{itemize}
\item[$(1)$] If $\ms$ is a geodesic space, then any two points $x,y\in \ms$ admit a midpoint, i.e. a point $z\in \ms$ such that
\[d(x,z)=d(y,z)=\frac{1}{2}d(x,y).\]
\item[$(2)$] Conversely, if $\ms$ is complete and if any two points in $M$ admit a midpoint, then $\ms$ is a geodesic space.

\end{itemize}
\end{pro}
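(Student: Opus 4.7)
The plan is to dispatch part $(1)$ immediately from the definitions and then to prove part $(2)$ via the classical dyadic midpoint construction. For part $(1)$, if $\gamma:[0,1]\to M$ is a geodesic connecting $x$ to $y$, then $z:=\gamma(1/2)$ is a midpoint: the constant-speed identity $d(\gamma(s),\gamma(t))=(t-s)d(x,y)$ applied at $(s,t)=(0,1/2)$ and $(1/2,1)$ yields $d(x,z)=d(z,y)=\tfrac{1}{2}d(x,y)$.

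For part $(2)$, fix $x,y\in M$ and let $D_n=\{k/2^n : 0\le k\le 2^n\}$, $D=\bigcup_{n\ge 0} D_n$. I would build $\gamma:D\to M$ inductively: set $\gamma(0)=x$, $\gamma(1)=y$, and once $\gamma$ has been defined on $D_n$, for every odd $k\in\{1,\dots,2^{n+1}-1\}$ let $\gamma(k/2^{n+1})$ be any midpoint of $\gamma((k-1)/2^{n+1})$ and $\gamma((k+1)/2^{n+1})$, which exists by hypothesis.

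The heart of the argument is to establish by induction on $n$ that
\[d(\gamma(j/2^n),\gamma(k/2^n))\le (k-j)\,2^{-n}\,d(x,y), \qquad 0\le j\le k\le 2^n.\]
The base case $n=0$ is trivial. For the induction step, adjacent distances at level $n+1$ are halved by the midpoint property from adjacent distances at level $n$, hence bounded by $2^{-(n+1)}d(x,y)$; then the triangle inequality bootstraps from adjacent to arbitrary pairs at level $n+1$. Combining this bound (taken for $j=0$, $k=2^n$) with the reverse estimate $d(x,y)\le\sum_{i=1}^{2^n} d(\gamma((i-1)/2^n),\gamma(i/2^n))$ forces equality throughout, so in fact $d(\gamma(s),\gamma(t))=|t-s|\,d(x,y)$ for all $s,t\in D$.

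Since $\gamma|_D$ is then $d(x,y)$-Lipschitz and $D$ is dense in $[0,1]$, the completeness of $M$ yields a unique continuous extension $\bar\gamma:[0,1]\to M$; the isometry identity persists by continuity, so $\bar\gamma$ is a constant-speed path of length $d(x,y)$ connecting $x$ to $y$, i.e.\ a geodesic, and $M$ is therefore geodesic. The main obstacle will be the inductive estimate at intermediate scales: each newly inserted midpoint must not spoil the global $2^{-n}$-scaling of distances, which is exactly where the midpoint property and the triangle inequality have to be combined carefully. Completeness is used only at the extension step, but it is essential there, as otherwise the Cauchy sequences $\gamma(t_n)\to\bar\gamma(t)$ produced for irrational $t$ need not converge in $M$.
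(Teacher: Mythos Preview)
The paper does not actually prove this proposition: it is stated in Appendix~\ref{app:mg} as standard background in metric geometry, with no accompanying proof. Your argument is correct and is precisely the classical proof (as found, e.g., in Burago--Burago--Ivanov or Bridson--Haefliger): part~(1) is immediate from the constant-speed property, and part~(2) is the dyadic midpoint construction, using completeness only at the final extension step. One small remark on presentation: after showing that adjacent distances at level~$n$ equal $2^{-n}d(x,y)$, the equality $d(\gamma(j/2^n),\gamma(k/2^n))=(k-j)2^{-n}d(x,y)$ for general $j<k$ follows from combining your upper bound with the lower bound obtained from $d(x,y)\le d(x,\gamma(j/2^n))+d(\gamma(j/2^n),\gamma(k/2^n))+d(\gamma(k/2^n),y)$; you state the conclusion but do not spell out this second half of the squeeze, which is worth a line.
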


\subsection{Model spaces}

Given a real number $\kappa\in\R$, a geodesic space of special interest is the (complete and simply connected) $2$-dimensional Riemannian manifold with constant sectional curvature $\kappa$.
For given $\kappa\in\R$, this metric space $(M^2_{\kappa},d_{\kappa})$ is unique up to an isometry, and modelled as follows.\begin{itemize}
\item If $\kappa<0$, $(M^2_{\kappa},d_{\kappa})$ is the hyperbolic plane with metric multiplied by $1/\sqrt{-\kappa}$.
\item If $\kappa=0$, $(M^2_{0},d_{0})$ is the Euclidean plane equipped with its Euclidean metric.
\item If $\kappa>0$, $(M^2_{\kappa},d_{\kappa})$ is the Euclidean sphere in $\R^3$ of radius $1/\sqrt{\kappa}$ with the angular metric.
\end{itemize}
The diameter $\varpi_{\kappa}$ of $M^{2}_{\kappa}$ is 
\[\varpi_{\kappa}:=\left\{\begin{array}{cc}
     +\infty&\mbox{if}\quad\kappa\le0,\\
     \pi/\sqrt{\kappa}&\mbox{if}\quad\kappa>0.
\end{array}\right.\]
For $\kappa\in \R$, there is a unique geodesic connecting $x$ to $y$ in $(M^2_{\kappa},d_{\kappa})$ provided $d_{\kappa}(x,y)<\varpi_{\kappa}$.
By convention, we call triangle in $M^2_{\kappa}$ any set of three distinct points $\{p,x,y\}\subset M^2_{\kappa}$, with perimeter
\[
\textrm{peri}\{p,x,y\}:=d_{\kappa}(p,x)+d_{\kappa}(p,y)+d_{\kappa}(x,y)<2\varpi_{\kappa}.
\] 
Side lengths of triangle $\{p,x,y\}$ are the numbers $d_{\kappa}(p,x)$, $d_{\kappa}(p,y)$ and $d_{\kappa}(x,y)$.
Given $a,b,c>0$ satisfying the triangle inequality and such that $a+b+c<2\varpi_{\kappa}$, there exists a unique (up to an isometry) triangle $\{p,x,y\}$ in $M^2_{\kappa}$ such that $d_{\kappa}(p,x)=a$, $d_{\kappa}(p,y)=b$ and $d_{\kappa}(x,y)=c$.
The angle $\sphericalangle^{\kappa}_{p}(x,y)$ at $p$ in $\{p,x,y\}\subset M^2_{\kappa}$ is defined by
\[\cos\sphericalangle^{\kappa}_{p}(x,y):=\left\{
 \begin{array}{ll}
 \dfrac{a^2+b^2-c^2}{2ab}&\mbox{ if }\kappa=0,\vspace{0.2cm}\\
 \dfrac{c_{\kappa}(c)-c_{\kappa}(a)\cdot c_{\kappa}(b)}{\kappa\cdot s_{\kappa}(a)s_{\kappa}(b)}&\mbox{ if }\kappa\ne 0,
 \end{array}
 \right.\]
  where $a=d_{\kappa}(p,x)$, $b=d_{\kappa}(p,y)$, $c=d_{\kappa}(x,y)$ and $c_{\kappa}:=s'_{\kappa}$ with
 \begin{equation}
 \label{def:csk}
 s_{\kappa}(r):=\left\{
 \begin{array}{ll}
 \sin(r\sqrt\kappa)/\sqrt\kappa &\mbox{ if }\kappa>0,\\
 \sinh(r\sqrt{-\kappa})/\sqrt{-\kappa}&\mbox{ if }\kappa< 0.
 \end{array}
 \right.
 \end{equation}
We end by observing that the angle is constant along geodesics in the model space $(M^2_{\kappa},d_{\kappa})$.  
\begin{pro}
\label{pro:cstangle}
Let $\kappa\in\R$ and $\{p,x,y\}\subset(M^2_{\kappa},d_{\kappa})$ be a triangle.
If $\gamma_{x}$ and $\gamma_{y}$ are geodesics from $p$ to $x$ and from $p$ to $y$ respectively, then for all $(s,t)\in(0,1]^2$,
\[\sphericalangle^{\kappa}_p(\gamma_x(s),\gamma_y(t))=\sphericalangle^{\kappa}_p(x,y).\]
\end{pro}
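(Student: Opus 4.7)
The plan is to deduce the identity from the (Euclidean, spherical, hyperbolic) law of cosines in the corresponding model space $(M^2_\kappa, d_\kappa)$. Set $a = d_\kappa(p,x)$, $b = d_\kappa(p,y)$, $c = d_\kappa(x,y)$ and $\theta := \sphericalangle^\kappa_p(x,y)$. The defining formula for $\theta$ rearranges to
\[c_\kappa(c) = c_\kappa(a) c_\kappa(b) + \kappa s_\kappa(a) s_\kappa(b) \cos\theta \quad (\kappa \neq 0), \qquad c^2 = a^2+b^2-2ab\cos\theta \quad (\kappa = 0),\]
which is precisely the law of cosines in $M^2_\kappa$ for the triangle $\{p,x,y\}$ and, as a by-product, identifies $\theta$ with the Riemannian angle at $p$ between the initial velocity vectors of $\gamma_x$ and $\gamma_y$.

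Next, fix $(s,t) \in (0,1]^2$ and consider the sub-triangle $\{p, \gamma_x(s), \gamma_y(t)\}$. Since $\gamma_x$ and $\gamma_y$ are constant-speed geodesics issued from $p$, one has $d_\kappa(p,\gamma_x(s)) = sa$ and $d_\kappa(p,\gamma_y(t)) = tb$, and the restrictions $\gamma_x|_{[0,s]}$, $\gamma_y|_{[0,t]}$ are themselves geodesics issued from $p$ with the \emph{same} initial tangent directions as $\gamma_x,\gamma_y$. Hence the Riemannian angle at $p$ in the sub-triangle is still $\theta$, and applying the law of cosines once more yields
\[c_\kappa\bigl(d_\kappa(\gamma_x(s),\gamma_y(t))\bigr) = c_\kappa(sa) c_\kappa(tb) + \kappa s_\kappa(sa) s_\kappa(tb) \cos\theta \quad (\kappa \neq 0),\]
with the obvious Euclidean analogue for $\kappa = 0$. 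Plugging this identity into the defining formula for $\sphericalangle^\kappa_p(\gamma_x(s),\gamma_y(t))$ collapses everything to $\cos\theta$, proving the claim.

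The main, and essentially only, obstacle is the invocation of the law of cosines in the model space: this is classical in each of the three geometries, but the excerpt defines the angle purely by the algebraic formula and does not record its Riemannian interpretation. To make the argument self-contained, one would verify the identity in coordinates. For $\kappa = 0$, take $p = 0$ and write $\gamma_x(s) = sx$, $\gamma_y(t) = ty$: then $d_0(\gamma_x(s),\gamma_y(t))^2 = s^2 a^2 + t^2 b^2 - 2 s t \langle x,y\rangle$, and the defining formula evaluates to $\langle x,y\rangle/(ab)$ both for $(x,y)$ and for $(\gamma_x(s),\gamma_y(t))$. For $\kappa > 0$, parametrise $\gamma_x$, $\gamma_y$ by arc length along the respective great circles and compute the spherical dot product, obtaining the spherical cosine law directly; the hyperbolic case $\kappa < 0$ is analogous using the hyperboloid model. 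Once any one of these closed-form identities is available, the computation outlined above finishes the proof.
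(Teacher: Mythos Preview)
The paper does not actually supply a proof of this proposition: it is stated in the appendix as a background fact about the model spaces $(M^2_\kappa,d_\kappa)$, without justification. So there is no ``paper's own proof'' to compare against.

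Your argument is correct and is the standard one. The defining formula for $\sphericalangle^\kappa_p$ is precisely the law of cosines in $M^2_\kappa$, so the comparison angle coincides with the Riemannian angle at $p$; since initial segments of geodesics emanating from $p$ share the same initial tangent directions, the Riemannian angle at $p$ is unchanged when $x,y$ are replaced by $\gamma_x(s),\gamma_y(t)$, and a second application of the law of cosines recovers the same cosine. Your coordinate verification for $\kappa=0$ is complete as written; for $\kappa\ne 0$ the sphere/hyperboloid computations you sketch are routine and suffice. One small remark: the uniqueness of geodesics from $p$ (needed so that the segment from $p$ to $\gamma_x(s)$ is indeed $\gamma_x|_{[0,s]}$) is guaranteed by the perimeter bound $\mathrm{peri}\{p,x,y\}<2\varpi_\kappa$, which forces each side to have length below $\varpi_\kappa$.
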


\subsection{Curvature}
\label{subsec:curvature}
In this section, we describe the notion of curvature bounds of metric spaces.
Curvature bounds in general metric spaces are defined by comparison arguments involving the model surfaces $(M^2_{\kappa},d_{\kappa})$ discussed in the previous section.
The fundamental device allowing for this comparison is that of a comparison triangle.
Given a metric space $(\ms,d)$, we define a triangle in $\ms$ as any set of three points $\{p,x,y\}\subset \ms$. For $\kappa\in\R$, a comparison triangle for $\{p,x,y\}$ in $M^2_{\kappa}$ is an isometric embedding of $\{p,x,y\}$ in $M^2_{\kappa}$, i.e. a set $\{p_{\kappa},x_{\kappa},y_{\kappa}\}\subset M^2_{\kappa}$ such that  
\[d_{\kappa}(p_{\kappa},x_{\kappa})=d(p,x),\quad d_{\kappa}(p_{\kappa},y_{\kappa})=d(p,y)\quad\mbox{and}\quad d_{\kappa}(x_{\kappa},y_{\kappa})=d(x,y).\] 
Such a comparison triangle always exists (and is unique up to an isometry) provided
 \[\textrm{peri}\{p,x,y\}:=d(p,x)+d(p,y)+d(x,y)< 2\varpi_{\kappa}.\] 
We are now in position to define curvature bounds for geodesic spaces.
\begin{defi}
\label{def:curvk}
Let $\kappa\in \R$ and $(\ms,d)$ be a geodesic space.
 \begin{itemize}
\item[$(1)$] We say that ${\rm curv}(\ms)\ge\kappa$ if for any triangle $\{p,x,y\}\subset \ms$ satisfying ${\rm peri}\{p,x,y\}< 2\varpi_{\kappa}$, any comparison triangle $\{p_{\kappa},x_{\kappa},y_{\kappa}\}\subset M^2_{\kappa}$, any geodesic $\gamma$ joining $x$ to $y$ in $\ms$ and any geodesic $\gamma_{\kappa}$ joining $x_{\kappa}$ to $y_{\kappa}$ in $M^2_{\kappa}$ , we have for all $t\in[0,1],$
\begin{equation}
\label{boundkappabelow}
d(p,\gamma(t))\ge d_{\kappa}( p_{\kappa},\gamma_{\kappa}(t)).
\end{equation}
\item[$(2)$] We say that ${\rm curv}(\ms)\le\kappa$ if the above definition holds with opposite inequality in \eqref{boundkappabelow}.
\end{itemize}
\end{defi} 
\noindent The previous definition has a natural geometric interpretation: if ${\rm curv}(\ms)\ge\kappa$ (resp. ${\rm curv}(\ms)\le\kappa$) a triangle $\{p,x,y\}$ looks thicker (resp. thiner) than a corresponding comparison triangle $\{p_{\kappa},x_{\kappa},y_{\kappa}\}$ in $M^2_{\kappa}$.
In the context of $\kappa=0$, the above definition may be given an alternative form of practical interest.

\begin{pro}
\label{pro:NNC}
Let $(\ms,d)$ be a geodesic space.
Then ${\rm curv}(\ms)\ge0$ if, and only if, for any points $p,x,y\in \ms$ and any geodesic $\gamma$ joining $x$ to $y$, we have
\[\forall t\in[0,1],\quad d(p,\gamma(t))^2\ge (1-t)d(p,x)^2+td(p,y)^2-t(1-t)d(x,y)^2.\]
We have ${\rm curv}(\ms)\le0$ if, and only if, the same statement holds with opposite inequality.
\end{pro}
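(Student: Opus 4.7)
The plan is to reduce both directions of the equivalence to a single polarization identity in the Euclidean plane $M^2_0$. Note first that $\varpi_0 = +\infty$, so the perimeter condition in Definition \ref{def:curvk} is vacuous and every triangle $\{p,x,y\} \subset M$ admits a (unique up to isometry) comparison triangle $\{p_0,x_0,y_0\}$ in $M^2_0 = \R^2$. Moreover, in $\R^2$ there is a unique geodesic joining $x_0$ to $y_0$, namely $\gamma_0(t) = (1-t)x_0 + ty_0$, so the comparison inequality in Definition \ref{def:curvk} needs to be checked only for this specific Euclidean segment.

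The core computation is the following identity, valid for any $p_0, x_0, y_0 \in \R^2$ and any $t \in [0,1]$:
\[
\|p_0 - \gamma_0(t)\|^2 = (1-t)\|p_0-x_0\|^2 + t\|p_0-y_0\|^2 - t(1-t)\|x_0-y_0\|^2.
\]
This follows by expanding $\|(1-t)(p_0-x_0) + t(p_0-y_0)\|^2$ and applying the polarization identity $2\langle p_0-x_0, p_0-y_0\rangle = \|p_0-x_0\|^2 + \|p_0-y_0\|^2 - \|x_0-y_0\|^2$. Since $\{p_0,x_0,y_0\}$ is a comparison triangle, the distances on the right coincide with $d(p,x)$, $d(p,y)$ and $d(x,y)$, so the right-hand side equals $(1-t)d(p,x)^2 + td(p,y)^2 - t(1-t)d(x,y)^2$.

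With this identity in hand, both implications are immediate. For the forward direction, if ${\rm curv}(M) \ge 0$ and $\gamma$ is any geodesic from $x$ to $y$ in $M$, Definition \ref{def:curvk} gives $d(p,\gamma(t)) \ge d_0(p_0, \gamma_0(t))$; squaring and inserting the identity yields the desired inequality. Conversely, if the displayed inequality holds for all $p,x,y$ and all geodesics, then for any triangle $\{p,x,y\}$, any comparison triangle $\{p_0,x_0,y_0\}$ and any geodesic $\gamma$, combining the hypothesis with the identity gives $d(p,\gamma(t))^2 \ge d_0(p_0,\gamma_0(t))^2$, which is exactly the condition ${\rm curv}(M) \ge 0$. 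The statement for ${\rm curv}(M) \le 0$ is obtained by reversing all the inequalities throughout — no new argument is needed.

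There is no substantial obstacle; the proof is a direct translation of the comparison condition via the Euclidean polarization identity. The only point worth stating carefully is that in $\R^2$ the geodesic between two points is unique, so when comparing with an arbitrary geodesic $\gamma$ of $M$, no ambiguity in the choice of $\gamma_0$ arises and the equivalence between the two formulations of ${\rm curv} \ge 0$ (resp. $\le 0$) is clean.
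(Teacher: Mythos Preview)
Your proof is correct and follows exactly the same route as the paper: both reduce the equivalence to the Euclidean identity $\|p_0-\gamma_0(t)\|^2=(1-t)\|p_0-x_0\|^2+t\|p_0-y_0\|^2-t(1-t)\|x_0-y_0\|^2$ and then read off the result from Definition~\ref{def:curvk}. Your write-up simply spells out the details (vacuous perimeter condition, uniqueness of the Euclidean geodesic, both implications) that the paper leaves implicit.
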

\noindent The proof follows immediately from Definition \ref{def:curvk} by exploiting the geometry of the Euclidean plane.
Note indeed that, whenever $\{p,x,y\}\subset\R^2$ and $\R^2$ is equipped with the Euclidean metric $\|.-.\|$, the unique geodesic from $x$ to $y$ is $\gamma(t)=(1-t)x+ty$ and, for all $t\in [0,1]$, 
\[
\|p-\gamma(t)\|^2= (1-t)\| p-x\|^2+t\| p-y\|^2-t(1-t)\| x- y\|^2.
\]
For $\kappa\ne 0$, an equivalent formulation of Definition \ref{def:curvk}, given only in terms of the ambient metric $d$, is given in the next subsection using the notion of angle.
A (complete) geodesic space $(\ms,d)$ with ${\rm curv}(\ms)\le\kappa$ for some $\kappa\ge 0$ is sometimes called a CAT($\kappa$) space in reference to contributions of E. Cartan, A.D. Alexandrov and V.A. Toponogov.
A CAT($0$) space is also referred to as an NPC (non positively curved) space or an Hadamard space. Similarly, $\ms$ is also called an PC (positively curved) space if ${\rm curv}(\ms)\ge0$. If $(\ms,d)$ is a Riemannian manifold (complete for instance) with sectional curvature lower (resp. upper) bounded by $\kappa$ at every point, then $\mathrm{curv}(\ms)\ge \kappa$ (resp $\le \kappa$) in the sense of Definition \ref{def:curvk}. It is worth noting that the previous definitions are of global nature as they require comparison inequalities to be valid for all triangles (that admit a comparison triangle in the relevant model space).
Some definitions of curvature require the previous comparison inequalities to hold only locally.
The local validity of these comparison inequalities is known, under suitable conditions depending on the value of $\kappa$, to imply their global validity.
Results in this direction are known as globalisation theorems.

\subsection{Angles and space of directions}
\label{subsec:angles}
Angles, as defined below, allow to provide alternative characterisations of curvature bounds.
Let $(\ms,d)$ be a metric space and let $\kappa\in \R$.
Given a triangle $\{p,x,y\}$ in $M$ with $\textrm{peri}\{p,x,y\}<2\varpi_{\kappa}$, we define the comparison angle $\sphericalangle^{\kappa}_p(x,y)\in[0,\pi]$ at $p$ by 
 \[\cos\sphericalangle^{\kappa}_p(x,y):=\left\{
 \begin{array}{ll}
 \dfrac{d(p,x)^2+d(p,y)^2-d(x,y)^2}{2d(p,x)d(p,y)}&\mbox{ if }\kappa=0,\vspace{0.2cm}\\
 \dfrac{c_{\kappa}(d(x,y))-c_{\kappa}(d(p,x))\cdot c_{\kappa}(d(p,y))}{\kappa\cdot s_{\kappa}(d(p,x))s_{\kappa}(d(p,y))}&\mbox{ if }\kappa\ne 0,
 \end{array}
 \right.\]
  where $c_{\kappa}$ and $s_{\kappa}$ are as in \eqref{def:csk}.
In other words, given any comparison triangle $\{p_{\kappa},x_{\kappa},y_{\kappa}\}$ of $\{p,x,y\}$ in $ M^2_{\kappa}$, 
\[\sphericalangle^{\kappa}_p(x,y)=\sphericalangle^{\kappa}_{p_{\kappa}}(x_{\kappa},y_{\kappa}).\]

This allows to give an equivalent definition of curvature lower bounds that has the advantage of making sense on arbitrary metric spaces, not necessarily geodesic.
\begin{defi}[Quadruple comparison]
\label{def:quadcomp}
Let $(M,d)$ be a metric space.
Let $\kappa\in\R$.
We say that $\curv(M)\ge \kappa$, if for any four disctinct points $p,x,y,z\in M$ such that every three points have a perimeter less than $\varpi_\kappa$,
\[
\sphericalangle^{\kappa}_p(x,y)+\sphericalangle^{\kappa}_p(y,z)+\sphericalangle^{\kappa}_p(z,x)\le 2\pi.
\]
\end{defi}
\begin{pro}\label{pro:nongeoboundedcurv}
If $(M,d)$ is a geodesic space, then curvature lower bounds as defined in  Definition \ref{def:curvk} and Definition \ref{def:quadcomp} are equivalent.
Moreover, even if $(M,d)$ is not geodesic, and satisfies Definition \ref{def:quadcomp}, then, equation \eqref{boundkappabelow} is satisfied for all $t\in[0,1]$ and $\gamma(t)\in \ms$ such that 
\[
d(\gamma(0),\gamma(t))/t=d(\gamma(t),\gamma(1))/(1-t)=d(\gamma(1),\gamma(0)).
\]
\end{pro}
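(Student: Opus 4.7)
The plan is to establish the equivalence by proving two implications. Call them (A) Definition~\ref{def:curvk} implies Definition~\ref{def:quadcomp}, assuming $M$ is geodesic, and (B) Definition~\ref{def:quadcomp} implies inequality~\eqref{boundkappabelow} at any point $\gamma(t)\in M$ satisfying the parametrisation identity $d(\gamma(0),\gamma(t))/t=d(\gamma(t),\gamma(1))/(1-t)=d(\gamma(0),\gamma(1))$. The first statement of the proposition then follows by combining (A) with the specialisation of (B) to honest geodesics in a geodesic space, and the second statement of the proposition is exactly (B).

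For direction (A), the route goes through the notion of \emph{upper angle} at a point. Fix $p\in M$ and three other points $x,y,z$ together with geodesics $\gamma_x,\gamma_y,\gamma_z$ issuing from $p$ to each. Triangle comparison applied to $\{p,\gamma_x(s),\gamma_y(t)\}$ for small $s,t>0$, rearranged through the $\kappa$-law of cosines defining $\sphericalangle^\kappa_p$, translates into the fact that $(s,t)\mapsto\sphericalangle^\kappa_p(\gamma_x(s),\gamma_y(t))$ is non-increasing in each variable. This allows the definition
\[
\angle_p(x,y):=\lim_{s,t\to 0^+}\sphericalangle^\kappa_p(\gamma_x(s),\gamma_y(t))\le \sphericalangle^\kappa_p(x,y).
\]
A classical consequence of triangle comparison is that the upper angles satisfy the spherical triangle inequality $\angle_p(x,y)\le\angle_p(x,z)+\angle_p(z,y)$, equivalently $\angle_p(x,y)+\angle_p(y,z)+\angle_p(z,x)\le 2\pi$. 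Combined with the pointwise domination $\angle_p\le\sphericalangle^\kappa_p$, this yields Definition~\ref{def:quadcomp}.

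For direction (B), fix $p,x,y\in M$ and $\gamma(t)\in M$ as above; the parametrisation identity forces $d(x,\gamma(t))+d(\gamma(t),y)=d(x,y)$. Using the addition formula $c_\kappa(a+b)=c_\kappa(a)c_\kappa(b)-\kappa\, s_\kappa(a)s_\kappa(b)$, valid for every $\kappa\in\R$ by a direct computation on the functions $c_\kappa,s_\kappa$ defined in \eqref{def:csk}, substitution into the definition of the $\kappa$-comparison angle gives $\sphericalangle^\kappa_{\gamma(t)}(x,y)=\pi$. Applying Definition~\ref{def:quadcomp} to $\{p,x,y,\gamma(t)\}$ (distinct for $t\in(0,1)$ and $p\notin\{x,y,\gamma(t)\}$, the degenerate cases being trivial) therefore yields
\[
\sphericalangle^\kappa_{\gamma(t)}(x,p)+\sphericalangle^\kappa_{\gamma(t)}(y,p)\le \pi.
\]
One then invokes the Alexandrov lemma in the model space $M^2_\kappa$: realise the triangles $\{x_\kappa,\gamma(t)_\kappa,p'_\kappa\}$ and $\{y_\kappa,\gamma(t)_\kappa,p''_\kappa\}$ with side-lengths matching $d(x,\gamma(t)),d(y,\gamma(t)),d(p,\gamma(t)),d(p,x),d(p,y)$, and glue them along the edge through $\gamma(t)_\kappa$. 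Because the two angles at $\gamma(t)_\kappa$ sum to at most $\pi$, the points $p'_\kappa$ and $p''_\kappa$ coincide (or could be identified without crossing $x_\kappa y_\kappa$), and a comparison of the resulting configuration with the comparison triangle of $\{p,x,y\}$ in $M^2_\kappa$ yields $d(p,\gamma(t))\ge d_\kappa(p_\kappa,\gamma_\kappa(t))$.

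The main obstacle is the spherical triangle inequality on upper angles in direction (A): its proof is a classical but delicate limiting argument that uses triangle comparison for nearly-degenerate configurations and a rescaling in $M^2_\kappa$. The Alexandrov lemma in direction (B) is an elementary statement internal to the model spaces, proved by trigonometric case analysis; the required angle-addition identity for $c_\kappa$ is a one-line verification. Granting these two classical lemmas of comparison geometry, the remainder of the argument is straightforward bookkeeping with comparison angles.
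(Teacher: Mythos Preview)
The paper states this proposition as background in Appendix~\ref{app:mg} without supplying a proof, so there is no in-paper argument to compare against; I assess your sketch on its own merits. Direction~(B) is correct and is the standard route: since $d(x,\gamma(t))+d(\gamma(t),y)=d(x,y)$ forces $\sphericalangle^\kappa_{\gamma(t)}(x,y)=\pi$, the four-point condition applied at the base point $\gamma(t)$ gives $\sphericalangle^\kappa_{\gamma(t)}(x,p)+\sphericalangle^\kappa_{\gamma(t)}(y,p)\le\pi$, and the Alexandrov lemma in $M^2_\kappa$ then yields~\eqref{boundkappabelow}.

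Direction~(A), however, contains two errors. First, the displayed inequality $\angle_p(x,y)\le\sphericalangle^\kappa_p(x,y)$ is reversed: under $\mathrm{curv}(M)\ge\kappa$ the map $(s,t)\mapsto\sphericalangle^\kappa_p(\gamma_x(s),\gamma_y(t))$ is non-increasing (Proposition~\ref{pro:monotone}), so the limit as $s,t\to 0^+$ is the \emph{supremum}, giving $\angle_p(x,y)\ge\sphericalangle^\kappa_p(x,y)$. Fortunately, once corrected this is precisely the inequality needed to descend from a bound on $\sum\angle_p$ to one on $\sum\sphericalangle^\kappa_p$. Second, and more seriously, the claim that the triangle inequality $\angle_p(x,y)\le\angle_p(x,z)+\angle_p(z,y)$ is ``equivalently'' the perimeter bound $\angle_p(x,y)+\angle_p(y,z)+\angle_p(z,x)\le 2\pi$ is false: the triple $a=b=c=\pi$ satisfies all three triangle inequalities yet sums to $3\pi$, and this configuration is realised by the upper angles at the branch point of a metric tripod. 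The triangle inequality for upper angles holds in \emph{any} metric space and therefore cannot by itself yield a conclusion that fails in the tripod; the perimeter bound genuinely requires the lower curvature hypothesis. One standard way to obtain it is to argue that under $\mathrm{curv}(M)\ge\kappa$ the three directions $\uparrow_p^x,\uparrow_p^y,\uparrow_p^z$ span a subset of $\Sigma_p$ with $\mathrm{curv}\ge 1$, forcing any triangle there to have perimeter at most $2\pi$; alternatively one can glue the three comparison triangles $\tilde\triangle(p,x,y)$, $\tilde\triangle(p,y,z)$, $\tilde\triangle(p,z,x)$ around $\tilde p$ and use point-side comparison to rule out overlap. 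Your skeleton for~(A) is the right one, but this intermediate step needs an independent argument, not an appeal to the angle triangle inequality.
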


The next result presents a characterization of curvature bounds in terms of the monotonicity of the comparison angle.
\begin{pro}[Angle monotonicity]\label{pro:monotone}
Let $(\ms,d)$ be a geodesic space and let $\kappa\in \R$.
Then ${\rm curv}(\ms)\ge \kappa$ (resp. ${\rm curv}(\ms)\le \kappa$), in the sense of Definition \ref{def:curvk}, if and only if, for any triangle $\{p,x,y\}$ in $M$ and any geodesics $\gamma_{x}$ and $\gamma_y$ from $p$ to $x$ and from $p$ to $y$ respectively, the function
 \[(s,t)\in[0,1]^2\mapsto\sphericalangle^{\kappa}_p(\gamma_x(s),\gamma_y(t)),\]
 is non-increasing (resp. non-decreasing) in each variable when the other is fixed.
 \end{pro}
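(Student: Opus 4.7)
My plan is to prove the equivalence by using the comparison-triangle angle formulas, with Proposition~\ref{pro:cstangle} (constancy of angles along geodesics in $M^2_\kappa$) as the bridge. By symmetry it suffices to treat the lower-bound case $\curv(M)\ge\kappa$ together with the non-increasing statement; the upper-bound case is identical with all inequalities flipped. Throughout, I will use the elementary analytic fact that, for fixed sides $d(p,a),d(p,b)$ (with perimeter $<2\varpi_\kappa$), the comparison-angle formula defining $\cos\sphericalangle^\kappa_p(a,b)$ is strictly decreasing in the opposite side $d(a,b)$; equivalently, the comparison angle at $p$ is strictly increasing in the side opposite to $p$. This is clear from the Euclidean law of cosines when $\kappa=0$, and a direct computation for $\kappa\ne 0$.

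For the forward direction ($\curv(M)\ge\kappa\Rightarrow$ non-increasing angles): fix a triangle $\{p,x,y\}$ and geodesics $\gamma_x,\gamma_y$ issued from $p$. By symmetry in $(s,t)$, it is enough to show monotonicity in $s$ with $t$ fixed. Set $q=\gamma_y(t)$, pick $0<s_1<s_2\le 1$, and put $a=\gamma_x(s_1)$, $b=\gamma_x(s_2)$. Since $\gamma_x$ has constant speed, the restriction of $\gamma_x$ to $[0,s_2]$ is a geodesic from $p$ to $b$ passing through $a$ at the normalised time $s_1/s_2$. Pick a comparison triangle $\{\bar p,\bar b,\bar q\}\subset M^2_\kappa$ of $\{p,b,q\}$ with comparison geodesic $\bar\gamma$ from $\bar p$ to $\bar b$, and let $\bar a=\bar\gamma(s_1/s_2)$. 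Definition~\ref{def:curvk} gives $d(a,q)\ge d_\kappa(\bar a,\bar q)$, while Proposition~\ref{pro:cstangle} yields $\sphericalangle^\kappa_{\bar p}(\bar a,\bar q)=\sphericalangle^\kappa_{\bar p}(\bar b,\bar q)=\sphericalangle^\kappa_p(b,q)$. The comparison angle $\sphericalangle^\kappa_p(a,q)$ is computed from the sides $d(p,a)=d_\kappa(\bar p,\bar a)$, $d(p,q)=d_\kappa(\bar p,\bar q)$ and $d(a,q)$, while $\sphericalangle^\kappa_{\bar p}(\bar a,\bar q)$ uses the same first two sides but opposite side $d_\kappa(\bar a,\bar q)\le d(a,q)$; monotonicity of the comparison angle in the opposite side therefore gives $\sphericalangle^\kappa_p(a,q)\ge\sphericalangle^\kappa_p(b,q)$, which is exactly the required monotonicity.

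For the reverse direction (angle monotonicity $\Rightarrow \curv(M)\ge\kappa$): let $\{p,x,y\}$ be a triangle with perimeter less than $2\varpi_\kappa$, let $\gamma:[0,1]\to M$ be a geodesic from $x$ to $y$, set $z=\gamma(t)$, and let $\{\bar p,\bar x,\bar y\}\subset M^2_\kappa$ be a comparison triangle with comparison geodesic $\bar\gamma$ and $\bar z=\bar\gamma(t)$. Applying the hypothesised angle monotonicity to the triangle $\{x,p,y\}$ based at $x$, with geodesics $\gamma_{xp}$ from $x$ to $p$ and $\gamma_{xy}=\gamma$ from $x$ to $y$, at the value $s=1$ and varying the second argument yields $\sphericalangle^\kappa_x(p,\gamma(t))\ge \sphericalangle^\kappa_x(p,\gamma(1))=\sphericalangle^\kappa_x(p,y)$. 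By Proposition~\ref{pro:cstangle} applied in $M^2_\kappa$, $\sphericalangle^\kappa_x(p,y)=\sphericalangle^\kappa_{\bar x}(\bar p,\bar y)=\sphericalangle^\kappa_{\bar x}(\bar p,\bar z)$. Both $\sphericalangle^\kappa_x(p,z)$ and $\sphericalangle^\kappa_{\bar x}(\bar p,\bar z)$ are comparison angles computed from the common sides $d(p,x)=d_\kappa(\bar p,\bar x)$ and $d(x,z)=d_\kappa(\bar x,\bar z)$ with opposite sides $d(p,z)$ and $d_\kappa(\bar p,\bar z)$ respectively, so strict monotonicity of the comparison angle in the opposite side forces $d(p,z)\ge d_\kappa(\bar p,\bar z)$, as required. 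The main obstacle I anticipate is the bookkeeping needed when $\kappa>0$: one must check that every subtriangle appearing in the argument (notably $\{p,b,q\}$ and $\{p,a,q\}$ above, and $\{p,x,z\}$ in the reverse direction) still has perimeter less than $2\varpi_\kappa$ so that comparison triangles and comparison angles are well defined, which follows from the triangle inequality but is easy to overlook.
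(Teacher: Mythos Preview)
The paper does not actually prove Proposition~\ref{pro:monotone}: it is stated in Appendix~\ref{app:mg} as a standard background fact from metric geometry, without argument. So there is no ``paper's own proof'' to compare against.

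That said, your proof is correct and follows the standard route (as in, e.g., Burago--Burago--Ivanov). In the forward direction you correctly apply Definition~\ref{def:curvk} to the subtriangle $\{q,p,b\}$ with opposite vertex $q$ and geodesic side $[p,b]$, obtain $d(a,q)\ge d_\kappa(\bar a,\bar q)$, and then combine the monotonicity of the comparison angle in the opposite side with Proposition~\ref{pro:cstangle} to conclude. In the reverse direction you legitimately apply the angle-monotonicity hypothesis at the vertex $x$ (the hypothesis is universal over base points), freeze $s=1$, and read off the distance comparison from the same monotonicity in the opposite side. Your remark about perimeter bookkeeping when $\kappa>0$ is apt: every subtriangle you use has perimeter no larger than that of the original triangle by the triangle inequality, so comparison triangles and comparison angles remain well defined throughout.
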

 
 In a geodesic space $(\ms,d)$, if $p\in \ms$ and $\gamma_x$ and $\gamma_y$ are two geodesics connecting $p$ to $x$ and $y$ respectively, we define
 \[
 \sphericalangle^{\kappa}(\gamma_x,\gamma_y):=\lim_{s,t\to 0}\sphericalangle^{\kappa}_p(\gamma_x(s),\gamma_y(t)),
 \]
 when it exists.
 It follows from Proposition \ref{pro:monotone} that this limit exists provided ${\rm curv}(\ms)\ge \kappa$ or ${\rm curv}(\ms)\le \kappa$.
It may be shown furthermore that this limit is independent of $\kappa$.
Hence, whenever $\ms$ has upper or lower curvature bound, we denote $\sphericalangle(\gamma_x,\gamma_y)$ the angle between these two geodesics. Given a third geodesic $\gamma_z:[0,1]\to S$ such that $\gamma_z(0)=p$ and $\gamma_z(1)=z$, we have the triangular inequality \begin{equation}
 \label{angleti}
\sphericalangle(\gamma_x,\gamma_y)\le \sphericalangle(\gamma_x,\gamma_z)+\sphericalangle(\gamma_z,\gamma_y),
 \end{equation}
 so that $\sphericalangle$ defines a pseudo metric on the set $G(p)$ of all geodesics emanating from $p$.
Defining the equivalence relation $\sim$ on $G(p)$ by $\alpha\sim\beta\Leftrightarrow \sphericalangle(\alpha,\beta)=0$, the angle $\sphericalangle$ induces a metric (still denoted $\sphericalangle$) on the quotient set $G(p)/\sim$ and we call space of directions the completion $(\Sigma_p,\sphericalangle)$ of $(G(p)/\sim,\sphericalangle)$.
An element of $\Sigma_p$ is called a direction.

\subsection{Tangent cones}
\label{subsec:cone}
Metric spaces considered so far have a priori no differentiable structure.
In this context, an analog of a tangent space is provided by the notion of a tangent cone.
This section shortly reviews this notion.
Below, $M$ denotes a geodesic space with lower or upper bounded curvature in the sense of Definition \ref{def:curvk}.
  
\begin{defi}[Tangent cone]
Let $p\in \ms$.
The tangent cone $T_p\ms$ at $p$ is the Euclidean cone over the space of directions $(\Sigma_p,\sphericalangle)$.
In other words, $T_p\ms$ is the metric space:\begin{itemize} 
\item Whose underlying set consists in equivalent classes in $\Sigma_p\times[0,+\infty)$ for the equivalence relation $\sim$ defined by 
\[
(\alpha,s)\sim (\beta,t) \Leftrightarrow ((s=0\mbox{ and }t=0)\mbox{ or }(s=t\mbox{ and }\alpha=\beta)).
\]
A point in $T_p\ms$ is either the tip of the cone $o_p$, i.e. the class $\Sigma_p\times\{0\}$, or a couple $(\alpha,s)\in\Sigma_p\times(0,+\infty)$ (identified to the class $\{(\alpha,s)\}$).
\item Whose metric $d_p$ is defined (without ambiguity) by
\[
d_p((\alpha,s),(\beta,t)):=\sqrt{s^2+t^2-2st\cos\sphericalangle(\alpha,\beta)}.
\]
\end{itemize}
\end{defi}
For $u=(\alpha,s)$ and $v=(\beta,t)\in T_p\ms$, we often denote $\|u-v\|_p:=d_p(u,v)$, $\|u\|_p:=d_p(o_p,u)=s$ and \[ \langle u,v\rangle_p:=st\cos\sphericalangle(\alpha,\beta)=(\|u\|^2_p+\|v\|^2_p-\|u-v\|^2_p)/2.\]

\begin{pro}
 Let $(\ms,d)$ be a geodesic space and $p\in \ms$ be fixed.
If ${\rm curv}(\ms)\ge \kappa$ for some $\kappa\in \R$, then $T_p\ms$ is a metric space with ${\rm curv}(T_p\ms)\ge 0$ (in the sense of Definition \ref{def:quadcomp}).
\end{pro}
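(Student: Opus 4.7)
The goal is to verify Definition \ref{def:quadcomp} with $\kappa = 0$ in $T_p\ms$: for any four distinct points $q,u,v,w \in T_p\ms$,
$$\sphericalangle^0_q(u,v) + \sphericalangle^0_q(v,w) + \sphericalangle^0_q(w,u) \le 2\pi.$$
My plan is to handle first the case where the apex is the tip $o_p$, which reduces cleanly to the assumption $\curv(\ms) \ge \kappa$, and then upgrade to an arbitrary apex by invoking the Berestovskii-type correspondence between Euclidean cones and their bases.

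For $q = o_p$ and $u = (\alpha,s)$, $v = (\beta,t)$, $w = (\gamma,r)$ with $s,t,r > 0$, the cone metric $d_p(u,v)^2 = s^2 + t^2 - 2st\cos\sphericalangle(\alpha,\beta)$ combined with the Euclidean law of cosines defining $\sphericalangle^0_{o_p}$ immediately gives $\sphericalangle^0_{o_p}(u,v) = \sphericalangle(\alpha,\beta)$ and similarly for the other two pairs. The inequality to prove thus becomes the purely angular statement
$$\sphericalangle(\alpha,\beta) + \sphericalangle(\beta,\gamma) + \sphericalangle(\gamma,\alpha) \le 2\pi. \qquad(\star)$$
By density of $G(p)/{\sim}$ in $\Sigma_p$ and continuity of $\sphericalangle$, it suffices to check $(\star)$ for directions of honest geodesics $\gamma_\alpha,\gamma_\beta,\gamma_\gamma \in G(p)$. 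For $\varepsilon > 0$ small enough that the perimeter condition holds, applying Definition \ref{def:quadcomp} for $\ms$ to the quadruple $\{p, \gamma_\alpha(\varepsilon), \gamma_\beta(\varepsilon), \gamma_\gamma(\varepsilon)\}$ yields
$$\sphericalangle^\kappa_p(\gamma_\alpha(\varepsilon), \gamma_\beta(\varepsilon)) + \sphericalangle^\kappa_p(\gamma_\beta(\varepsilon), \gamma_\gamma(\varepsilon)) + \sphericalangle^\kappa_p(\gamma_\gamma(\varepsilon), \gamma_\alpha(\varepsilon)) \le 2\pi.$$
By the angle monotonicity of Proposition \ref{pro:monotone} the three summands are non-increasing in $\varepsilon$ and converge as $\varepsilon \downarrow 0$ to the corresponding $\sphericalangle(\cdot,\cdot)$; passing to the limit delivers $(\star)$ and hence quadruple comparison at the tip.

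The main obstacle is the case $q \ne o_p$: at a non-tip point of the Euclidean cone, the comparison angle $\sphericalangle^0_q$ mixes radial and angular contributions of the cone metric and does not reduce mechanically to $(\star)$. My plan is to invoke the Berestovskii-style cone correspondence (see \cite{BuBuIv01}): the Euclidean cone $C(X)$ satisfies quadruple comparison $\curv \ge 0$ if and only if $X$ satisfies the analogous quadruple comparison with $\kappa = 1$. To apply it I would run the tip-apex argument above a second time, but now comparing in the model space $M^2_1$ and using the spherical law of cosines in place of the Euclidean one, to deduce that $\Sigma_p$ itself satisfies the quadruple inequality with $\kappa=1$ at every vertex; the correspondence then transfers $\curv(\Sigma_p) \ge 1$ to $\curv(T_p\ms) \ge 0$. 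The delicate points to manage will be (i) respecting the perimeter bound $< 2\pi$ on $\Sigma_p$ required for the spherical comparison angles to be defined, and (ii) handling that $\Sigma_p$ is only a completion, so the density/limit step must be performed with some care so that angles between limiting directions still coincide with limits of the angles they approximate.
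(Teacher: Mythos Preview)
The paper does not prove this proposition itself; it is stated as background material in the appendix, with a pointer to Proposition~28 of \cite{yokota2012rigidity} for the proof. So there is no in-paper argument to compare against, and I assess your attempt on its own.

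Your tip case is correct: the Euclidean comparison angle at $o_p$ collapses to the $\Sigma_p$-distance by the very definition of the cone metric, and $(\star)$ then follows by passing to the limit in the quadruple inequality of $M$ at vertex $p$.

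The gap is in the non-tip case. You want to deduce $\curv(\Sigma_p)\ge 1$ by ``rerunning the tip argument with the spherical law of cosines,'' but this does not carry over. The tip argument worked only because the comparison angle at the distinguished apex $o_p$ reduces algebraically to the $\Sigma_p$-distance; at a generic direction $\alpha_0\in\Sigma_p$ there is no such collapse. Applying the quadruple inequality of $M$ again at vertex $p$ merely reproduces $(\star)$. If instead you apply it at vertex $\gamma_{\alpha_0}(\varepsilon)$ with the remaining points $\gamma_{\alpha_i}(\varepsilon)$ and let $\varepsilon\to 0$, the rescaled side lengths $d(\gamma_{\alpha_i}(\varepsilon),\gamma_{\alpha_j}(\varepsilon))/\varepsilon$ converge to the \emph{cone} distances $d_p((\alpha_i,1),(\alpha_j,1))$, so the limiting angles are the \emph{Euclidean} comparison angles in $T_pM$, not the spherical comparison angles in $\Sigma_p$ (these differ: e.g.\ for three directions pairwise at angle $\pi/2$ the two give $\pi/3$ versus $\pi/2$). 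Hence this limit does not establish the $\kappa=1$ four-point condition in $\Sigma_p$.

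What that computation \emph{does} give you is the direct route, bypassing $\Sigma_p$ and Berestovskii entirely: for arbitrary $u_i=(\alpha_i,r_i)\in T_pM$ with $u_0$ the apex, apply the four-point condition in $M$ at $\gamma_{\alpha_0}(r_0\varepsilon)$ with the other three points $\gamma_{\alpha_i}(r_i\varepsilon)$. From the definition of the angle one has $d(\gamma_{\alpha_i}(r_i\varepsilon),\gamma_{\alpha_j}(r_j\varepsilon))/\varepsilon\to d_p(u_i,u_j)$, and since for shrinking triangles the $\kappa$-comparison angle converges to the $0$-comparison angle, the limit $\varepsilon\to 0$ yields precisely the quadruple inequality at $u_0$ in $T_pM$. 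Density in $\Sigma_p$ and the case where one of the $u_i$ equals $o_p$ are handled as in your first step.
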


Note that the tangent space is not always a geodesic space, see discussion before Proposition 28 in \cite{yokota2012rigidity} and the Proposition itself for the proof.
Notation $\|.\|_p$ and $\langle.,.\rangle_p$ introduced above is justified by the fact that the cone $T_p\ms$ possesses a Hilbert-like structure described as follows.
For a point $u=(\alpha,t)$ and $\lambda\ge 0$, we define $\lambda\cdot u:=(\alpha, \lambda t)$.
Then, the sum of points $u,v\in T_p\ms$ is defined as the mid-point of $2\cdot u$ and $2\cdot v$ as defined in Definition \ref{def:midpoint}.
Finally, it may be checked using the previous definitions that, for any $u,v\in T_p\ms$ and any $\lambda\ge 0$, we get 
\[\|\lambda\cdot u\|_p=\lambda\|u\|_p\quad\mbox{and}\quad \langle \lambda\cdot u,v\rangle_p=\langle u,\lambda\cdot v\rangle_p=\lambda\langle u,v\rangle_p.\]

Next we define logarithmic maps.
Fix $p\in\ms$.
Since $\ms$ has upper or lower bounded curvature, the angle monotonicity imposes the following observation.
If there are two geodesics $\gamma^1_x$ and $\gamma^2_x$ connecting $p$ to $x$ such that $\sphericalangle(\gamma^1_x,\gamma^2_x)=0$, then $\gamma^1_{x}=\gamma^2_{x}$. In other words, the set of points $x\in\ms$ for which there is not only one equivalence class of geodesics connecting $p$ to $x$ is exactly the set of points $x$ connected to $p$ by at least two distinct geodesics. This set of points is denoted $C(p)$ and called the cut-locus of $p$. For all $x\in\ms\setminus C(p)$, we denote $\uparrow^x_{p}$ the direction of the unique geodesic $\gamma_x:[0,1]\to\ms$ connecting $p$ to $x$.

\begin{defi}[Logarithmic map]
Let $(\ms,d)$ be a geodesic space with curvature bounded from above or below in the sense of Definition \ref{def:curvk} and fix $p\in \ms$.
We denote
 \[
 \log_p: x\in\ms\setminus C(p)\mapsto (\uparrow^{x}_p,d(p,x))\in T_p\ms.
 \]
  \end{defi}
  
For all $t\in[0,1]$ and all $x\in \ms\setminus C(p)$, one checks in particular that
 \[\log_p(\gamma_{x}(t))=t\cdot \log_p(x).\]
 More generally, if $\gamma:[0,1]\to\ms$ is a geodesic in $\ms$ and if we denote $p=\gamma(t)$ for some $t\in[0,1]$, then provided $\gamma(0)=x$ and $\gamma(1)=y$ both belong to $\ms\setminus C(p)$, we check that, for all $s\in[0,1]$,
 \[\log_p(\gamma(s))=(1-s)\cdot\log_p(x)+s\cdot\log_p(y).\]
Note finally that $C(p)$ is empty for spaces of curvature bounded from above by $0$. It is often practical to extend the definition of $\log_p$ to the cut locus $C(p)$. To that aim, it suffices to select, for any $x\in C(p)$, a geodesic connecting $p$ to $x$. 
Denoting as above $\uparrow_p^x$ the direction associated to this chosen geodesic, we simply extend $\log_p$ to $M$ by setting $\log_p(x)=(\uparrow^{x}_p,d(p,x))$. This extension depends a priori on the choice of geodesics connecting $p$ to $x\in C(p)$. However, note that when $(M,d)$ is a Polish space, this extension $\log_p$ can be chosen to be measurable on $M$, by application of the measurable selection theorem of Kuratowski and Ryll-Nardzewski (see Theorem 6.9.3 in \cite{Bog2007}). Next is a fundamental result.

  \begin{pro}\label{lem:tangentcone2}
  Let $(\ms,d)$ be a geodesic space and $p\in \ms$ be fixed.
If ${\rm curv}(\ms)\ge0,$
then for all $x,y\in \ms$,
 \[d(x,y)\le\|\log_p(x)-\log_p(y)\|_p,\]
 with equality if $x=p$ or $y=p$.
 \end{pro}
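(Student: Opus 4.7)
The plan is to reduce the inequality to a statement about angles via the Euclidean law of cosines and then invoke angle monotonicity (Proposition \ref{pro:monotone}) for $\kappa = 0$.

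First, I would unwind the definitions on the right-hand side. Writing $\log_p(x) = (\uparrow^x_p, d(p,x))$ and $\log_p(y) = (\uparrow^y_p, d(p,y))$, the definition of the metric on the tangent cone $T_p M$ gives
\[
\|\log_p(x) - \log_p(y)\|_p^2 = d(p,x)^2 + d(p,y)^2 - 2 d(p,x) d(p,y) \cos \sphericalangle(\gamma_x, \gamma_y),
\]
where $\gamma_x, \gamma_y$ are the (chosen) geodesics from $p$ to $x$ and $y$ associated with the directions $\uparrow^x_p$ and $\uparrow^y_p$. On the other hand, the definition of the Euclidean comparison angle gives
\[
d(x,y)^2 = d(p,x)^2 + d(p,y)^2 - 2 d(p,x) d(p,y) \cos \sphericalangle^0_p(x,y).
\]
Subtracting these two identities, the desired inequality $d(x,y) \le \|\log_p(x)-\log_p(y)\|_p$ is seen to be equivalent to $\cos \sphericalangle^0_p(x,y) \ge \cos \sphericalangle(\gamma_x, \gamma_y)$, that is
\[
\sphericalangle^0_p(x,y) \le \sphericalangle(\gamma_x, \gamma_y).
\]

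The second step is to establish this angle comparison using Proposition \ref{pro:monotone}. Since $\mathrm{curv}(M) \ge 0$, that proposition asserts that the map $(s,t) \in [0,1]^2 \mapsto \sphericalangle^0_p(\gamma_x(s), \gamma_y(t))$ is non-increasing in each variable. Evaluating at $(s,t) = (1,1)$ yields $\sphericalangle^0_p(x,y)$, while letting $(s,t) \to (0,0)$ yields, by definition, the angle $\sphericalangle(\gamma_x, \gamma_y)$ between the two geodesics. Monotonicity then gives $\sphericalangle^0_p(x,y) \le \sphericalangle(\gamma_x, \gamma_y)$, which is the inequality we wanted.

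Finally, the equality case is immediate from the definitions. If $x = p$, then $d(p,x) = 0$, so $\log_p(x) = o_p$ is the tip of the cone and $\|\log_p(x) - \log_p(y)\|_p = \|\log_p(y)\|_p = d(p,y) = d(x,y)$; the case $y = p$ is symmetric. The only mild subtlety in the whole argument is ensuring that the angle monotonicity statement applies to the specific pair of geodesics selected in the (possibly multi-valued) extension of $\log_p$ to the cut locus, but this is not really an obstacle since Proposition \ref{pro:monotone} applies to \emph{any} choice of geodesics from $p$ to $x$ and from $p$ to $y$.
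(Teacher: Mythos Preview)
Your argument is correct and is exactly the standard route to this inequality: rewrite both sides via the law of cosines, reduce to an angle comparison, and invoke angle monotonicity (Proposition \ref{pro:monotone}) with $\kappa=0$. The equality case and the remark on the cut locus are handled properly.

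Note, however, that the paper does not actually supply a proof of Proposition \ref{lem:tangentcone2}: it is one of the background facts collected in Appendix \ref{app:mg} and is stated without proof. So there is nothing to compare against; your derivation is precisely the argument one would expect and is consistent with the definitions and auxiliary results the appendix sets up (in particular the cone metric $d_p$, the definition of $\sphericalangle(\gamma_x,\gamma_y)$ as a limit of $\sphericalangle^0_p(\gamma_x(s),\gamma_y(t))$, and the monotonicity characterisation of $\mathrm{curv}(M)\ge 0$).
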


\section*{Acknowledgements}
We would like to thank the associate editor, as well as two anonymous referees, for valuable comments that helped improve significantly the original version of the manuscript. We also express our gratitude to Philippe Rigollet for stimulating conversations and useful remarks on a preliminary version of the paper.

\bibliographystyle{alphaabbr}
\bibliography{biblio}

\end{document}